\documentclass[reqno]{amsart}
\usepackage{array}   
\usepackage{amsmath,amssymb,amsthm,amsfonts }
\usepackage{bm}
\usepackage{cite}
\usepackage{color,xcolor}
\usepackage{enumerate}
\usepackage{graphicx}
\usepackage{geometry}  
\usepackage{hyperref}
\usepackage{stmaryrd}
\SetSymbolFont{stmry}{bold}{U}{stmry}{m}{n}
\usepackage[normalem]{ulem}

\usepackage{ifthen} 

\newtheorem{theorem}{Theorem}[section]
\newtheorem{lemma}{Lemma}[section]

\newtheorem{remark}{Remark}[section]

\numberwithin{equation}{section}

\graphicspath{{figures/}{images/}}
\newcommand{\energy}[1]{\interleave #1 \interleave}
\newcommand{\norme}[1]{\interleave #1\interleave}
\newcommand{\norm}[1]{\left\Vert#1\right\Vert}
\newcommand{\abs}[1]{\left\vert#1\right\vert}

\newcommand{\jv}[1]{\llbracket #1 \rrbracket}

\renewcommand{\a}{\mathsf{a}}

\newcommand{\E}{\boldsymbol{E}}

\newcommand{\f}{\boldsymbol{f}}
\newcommand{\g}{\boldsymbol{g}}
\newcommand{\fp}{{\boldsymbol{f}^\prime}}

\newcommand{\ii}{\boldsymbol{ \mathrm{i} }}

\newcommand{\V}{\boldsymbol{V}}

\newcommand{\Vh}{\boldsymbol{V}_h}
\newcommand{\Nh}{\V_h^1} 
\newcommand{\hNh}{\widehat{\V}_h^1}

\newcommand{\bO}{\mathcal{O}}

\newcommand{\pih}{\pi_h}
\newcommand{\Gh}{\mathcal{G}_h}
\newcommand{\Fh}{\mathcal{F}_h}
\newcommand{\Th}{\mathcal{T}_h}

\newcommand{\Eh}{{\E}_h}

\newcommand{\PE}{\bm{P}_h\E } 
\newcommand{\PEp}{\boldsymbol{P}_h\E } 

\newcommand{\Ph}{\boldsymbol{P}_h }
\newcommand{\tPh}{\tilde{\boldsymbol{P}}_h }
\newcommand{\ph}{{\boldsymbol{\Phi}}_h}

\newcommand{\w}{{\boldsymbol{w}}}

\newcommand{\bL}{{\boldsymbol{L}}}
\newcommand{\bH}{{\boldsymbol{H}}}

\newcommand{\bb}{{\boldsymbol{b}}}
\newcommand{\bc}{{\boldsymbol{c}}}
\newcommand{\bd}{{\boldsymbol{d}}}
\newcommand{\bn}{{\boldsymbol{n}}}
\newcommand{\br}{{\boldsymbol{r}}}
\newcommand{\bv}{{\boldsymbol{v}}}
\newcommand{\bu}{{\boldsymbol{u}}}

\newcommand{\bx}{{\boldsymbol{x}}}

\newcommand{\bz}{{\boldsymbol{z}}}
\newcommand{\bp}{{\boldsymbol{p}}}
\newcommand{\bq}{{\boldsymbol{q}}}

\newcommand{\bQ}{{\boldsymbol{Q}}}
\newcommand{\vh}{{\boldsymbol{v}_h}}

\newcommand{\bX}{{\boldsymbol{X} }}

\newcommand{\bph}{{\boldsymbol{\phi} }}
\newcommand{\btau}{{\boldsymbol{\tau} }}

\newcommand{\bphi}{{\boldsymbol{\phi}}}
\newcommand{\bxi}{{\boldsymbol{\xi} }}

\newcommand{\eq}[1]{\begin{align}#1\end{align}}
\newcommand{\eqn}[1]{\begin{align*}#1\end{align*}}

\newcommand{\dd}{{\rm d}}

\newcommand{\ls}{\lesssim}

\newcommand{\n}{\boldsymbol{\nu}}

\newcommand{\Ga}{\Gamma}
\newcommand{\la}{\lambda}

\newcommand{\na}{\nabla}

\newcommand{\Om}{\Omega}
\newcommand{\pa}{\partial}

\newcommand{\ka}{\kappa}

\newcommand{\R}{\mathbb{R}}
\newcommand{\C}{\mathbb{C}}

\newcommand{\Mk}{\mathcal{M}_{\kappa}}

\newcommand{\Csol}{C_{\mathrm{sol}}}

\DeclareMathOperator{\curl}{\boldsymbol{\mathrm{curl}}}
\DeclareMathOperator{\dive}{{\mathrm{div}}}

\DeclareMathOperator{\rank}{{rank}}

\newcommand{\qaq}{\quad\mbox{and}\quad}

\newcommand{\fa}{\mathsf{f}}
\begin{document}
	
	\title[PPR for time-harmonic Maxwell Equations]{Polynomial preserving recoveries of 
		edge element method on Cartesian grids 	for the  time-harmonic Maxwell  equations with large  wave number}
	\markboth{ S. Lu and H. Wu}{EEM & CIP-EEM for time-harmonic Maxwell Equations}
	
	\author[S. Lu]{Shuaishuai Lu}
	\address{School of Mathematics, Nanjing University, Jiangsu, 210093, P.R. China. }
	\curraddr{LSEC, Institute of Computational Mathematics and Scientific/Engineering
	Computing, Academy of Mathematics and Systems Sciences, Chinese Academy of
	Sciences}
	\email{ssl@smail.nju.edu.cn}
	\thanks{This work was partially supported by {the National Key R\&D Program of China grant 2024YFA1012600} and the NSF of China under grants 12171238 and 12261160361.}
	
	\author[H. Wu]{Haijun Wu}
	\address{School of Mathematics, Nanjing University, Jiangsu, 210093, P.R. China. }
	\curraddr{}
	\email{hjw@nju.edu.cn}
	\thanks{}
	
	\subjclass[2010]{
		65N12, 
		65N15, 
		65N30, 
		78A40  
	}
	
	\date{}
	
	\dedicatory{}
	
	\keywords{Time-harmonic Maxwell equations, Large wave number, EEM,  Polynomial preserving recovery}
	
	\begin{abstract}
		
		This paper considers the lowest-order first type N\'{e}d\'{e}lec edge element method (EEM) on  Cartesian grids for the three-dimensional time-harmonic Maxwell equations with a large wave number. New polynomial preserving recovery (PPR) operators are proposed for the curl of the edge element solution and for the solution itself, respectively. Under the condition that $\ka^3 h^2 \Csol$ is sufficiently small, second-order superconvergence estimates are proved for both the recovered curl and the recovered solution, where $\ka$ is the wave number, $h$ is the mesh size, and $\Csol$ is a stability constant associated with the Maxwell solution operator. In particular, the analysis shows that the proposed PPR procedures cannot mitigate the well-known pollution effect inherent to the EEM. To reduce the pollution error,   we further propose a new continuous interior penalty edge element method (CIP-EEM) that incorporates an additional normal-jump penalty term. It is shown that by appropriately choosing the penalty parameters, the new CIP-EEM can improve the phase error by two orders in $\ka h$. Numerical experiments are presented to confirm the theoretical superconvergence results and to demonstrate that the CIP-EEM can effectively reduce the pollution error in the high-frequency regime.
	\end{abstract}
	
	\maketitle
	
	\section{Introduction}\label{sc1}
	
	In this paper, we consider the time-harmonic Maxwell equations for the electric field $\E$ subject to the perfect electric conductor (PEC) boundary condition:
	\begin{subequations}
		\label{Maxwell}
		\begin{align}
			\curl \curl \E - \ka^{2} \E &= \f \quad \text{in } \Om, \label{eq:eq} \\
			\E \times \n &= \bm{0} \quad \text{on } \Ga:=\pa \Om, \label{eq:bc}
		\end{align}
	\end{subequations}
	where $\Omega \subset \mathbb{R}^{3}$ is a bounded, simply connected Lipschitz polyhedron with connected boundary $\Ga$, $\ka>0$ is the wave number, $\n$ denotes the outward unit normal vector, and $\f$ is the source term (see, e.g., \cite{monk2003}).
	
	The time-harmonic Maxwell system \eqref{Maxwell} is a fundamental model in computational electromagnetics and arises in a wide range of applications, including medical imaging, wireless communications, antenna design, radar detection, and electromagnetic cloaking. Its accurate numerical simulation is therefore of central importance for predicting wave propagation phenomena in realistic electromagnetic environments. Among the many discretization methods developed for this problem, such as $H(\curl)$-conforming edge element method (EEM) \cite{Nedelec1980,Nedelec1986,monk1993analysis,monk2003simple,monk2003}, discontinuous Galerkin (DG) methods \cite{houston2005interior,lu2017HDGMaxwell,feng2014absolutely}, and boundary element methods \cite{buffa2002boundary,buffa2003boundary,buffa2003galerkin}, edge elements remain particularly attractive due to their compatibility with the intrinsic structure of Maxwell's equations and their ability to handle complex geometries.
	
	When the wave number is large, however, the numerical approximation of \eqref{Maxwell} becomes significantly more challenging. In addition to the usual approximation error, high-frequency discretizations may suffer from the well-known \emph{pollution effect}---a consequence of the phase error between the discrete and exact wave numbers---which induces numerical dispersion and substantially degrades the accuracy of the discrete solution. A recent study \cite{chaumont2024sharp} shows that, under the mesh condition that $\ka^3 h^2 \Csol$ is sufficiently small, the lowest-order edge element solution $\Eh$ satisfies the preasymptotic estimate
	\begin{equation}\label{eq:preErr}
		\energy{\E - \Eh} \;\ls\; \bigl(\ka h + \ka^3 h^2 \Csol \bigr)  ( \ka^{-1}\|\curl\E\|_1 + |\bm{E}|_1),
	\end{equation}
	where $h$ is the mesh size and $\Csol$ denotes the stability constant of the Maxwell solution operator. The first term $\bO(\ka h)$ on the right-hand side represents the interpolation error, whereas the second $\bO(\ka^3 h^2 \Csol)$ is the pollution term. Similar wave-number-explicit error estimates are obtained for EEM applied to Maxwell problem with impedance boundary condition \cite{luwu2026Preasymptotic,melenk2023wavenumber}, for the continuous interior penalty edge element method (CIP-EEM) \cite{luwu2026Preasymptotic,pplu2019}, and for the DG methods \cite{feng2014absolutely,lu2017HDGMaxwell}.  It remains an open question   whether superconvergence-based recovery techniques remain effective in the preasymptotic regime where pollution errors are already present.

	For Lagrange finite element methods, superconvergence theory and postprocessing techniques have been extensively developed (see e.g. \cite{ZienkiewiczSPRI,ZienkiewiczSPRII,Wahlbin1995book,Zhang2004New,Naga2004post,Naga2005pprho,Huang2010cluster} and the references therein). Among the many recovery techniques proposed in this context, the polynomial preserving recovery (PPR) method, introduced by Naga and Zhang \cite{Zhang2004New}, has proved to    be one of the most   effective. Its basic idea is to reconstruct, in a local least-squares manner, a higher-order polynomial approximation from the finite element solution, thereby yielding the recovered gradient by differentiation. Owing to its simplicity, robustness, and effectiveness in superconvergence analysis and a posteriori error estimation, the PPR technique has been widely studied and applied in engineering practice  \cite{GuoZhang2025Recovery}.
	
	By contrast, the corresponding theory for edge elements applied to Maxwell equations is much less complete. Starting from the pioneering work of Monk \cite{monk1992finite,Monk1994super}, who first established superconvergence results for Maxwell problems, further progress has been made by Huang et al.~\cite{Huang2012timeMeta,Huang2015Tetra,Huang2018Rect,Wu2020SRcmame,Wu2020SecondCubic,Wu2021Cubic,Wu2022function} for both two- and three-dimensional N\'ed\'elec elements. More recently, Wang et al.~\cite{Wang2019PPRMaxwell} extended the analysis to arbitrary-order edge elements within a mixed finite element framework and applied the PPR technique to derive global recovery superconvergence results. Wang et al.~\cite{Wang2020curlRect} further proposed a curl-recovery approach for the two-dimensional time-harmonic Maxwell equations. Nevertheless, the available literature  does not contain a wave-number-explicit analysis, nor does it  address the role of pollution in the large-wave-number regime.

	For the high-frequency Helmholtz equation, Du et al.~\cite{DuDGPPR2019,Du2020PPRjcm,Du2020HelmPPR} showed that although the PPR technique improves the interpolation component of the error, the pollution error is inherited by the recovered quantity and therefore cannot be eliminated by  this  postprocessing alone. It is then natural to ask whether an analogous phenomenon also holds for $H(\curl)$-conforming edge element approximations of the three-dimensional time-harmonic Maxwell equations. The Maxwell case is substantially more delicate,  owing to  its vector-valued nature and the presence of a  large  nontrivial kernel of the curl operator. As a consequence, the high-frequency recovery analysis for Maxwell equations cannot be obtained by a routine adaptation of the Helmholtz arguments.
	
	The first main objective of this paper is to establish a rigorous superconvergence theory for curl recovery of EEM in the  high-frequency  regime. To this end, we consider the lowest-order edge element method on uniform Cartesian meshes and construct a new polynomial preserving recovery operator $\Gh$ for the curl. A central ingredient of the superconvergence analysis is a tailored auxiliary Maxwell elliptic problem, inspired by \cite{chaumont2024sharp}, together with a discrete inf-sup condition for the associated sesquilinear form (see \eqref{eq:discrete-inf-sup}). Based on this framework, we first prove a supercloseness estimate between the finite element solution and the edge element interpolant of the exact solution, and then derive the superconvergence estimate
	\begin{equation}\label{eq:superCurl}
		\| \Gh \Eh - \curl \E \| \ls (\ka^2 h^2 + \ka^3 h^2 \Csol)\, C_{\E},
	\end{equation}
	under the condition $\ka^3 h^2 \Csol \le C_0$, where $C_0$ is independent of $\ka$ and $h$  and $C_{\E}: = \|\E\|_1+ \ka^{-1}\|\E\|_2 + \ka^{-2}\|\E\|_3$.  Compared with \eqref{eq:preErr}, this shows that the recovery operator improves the interpolation part of the error from $\bO(\ka h)$ to $\bO(\ka^2 h^2)$, while the pollution term remains of the same order.
	
	Moreover, to clarify the effect of the recovery procedure on the pollution error, we further prove that
	\[
	\| \Gh \Eh - \curl \Eh \| \ls \ka h (\ka^{-1} {\| \curl\E \|_1} + \|\E\|_{1}).
	\]
	This estimate shows that the pollution term in \eqref{eq:superCurl} cannot be removed by the recovery process. In particular, the quantity $\| \Gh \Eh - \curl \Eh \|$ provides a reliable estimator for $\| \curl(\E-\Eh)\|$ only when $\ka^3 h^2 \Csol \ls \ka h$. Therefore, in the high-frequency regime, PPR improves the interpolation error but does not reduce the intrinsic pollution error.
	
	The second main objective of this paper is to develop and analyze a recovery procedure for the function values themselves. This issue is of independent interest for the lowest-order first type of N\'ed\'elec elements, since their $\bL^2$ error does not in general attain second-order convergence. To compensate for this deficiency, we construct a function value recovery operator tailored to the lowest-order edge element approximation and prove the corresponding superconvergence estimate. In this way, we obtain rigorous recovery results not only for the curl but also for the electric field itself.
	
	Given that PPR  improves the interpolation error but does not   eliminate the  pollution error, it is natural to explore discretization methods  specifically  designed to reduce pollution. To this end, in the final section we propose  a new CIP-EEM and investigate its performance through dispersion analysis and numerical tests. The classical CIP-EEM uses the same approximation space as the EEM and modifies its bilinear form by adding penalty terms on the jumps of the tangential components of the curl across the interior  element faces. It has been shown to improve stability and reduce pollution effects  when the edge element space is of the second type (see \cite{pplu2019,luwu2026Preasymptotic}). However, tuning the penalty parameters of the classical CIP-EEM fails to significantly reduce the phase error when the edge element space is of the first type. We therefore add additional penalty terms on the jump of the normal components of the solution and formulate the new CIP-EEM. Dispersion analysis reveals that, with suitable penalty parameters, the phase error can be improved by two orders in $\ka h$.  Our numerical results indicate that, when combined with the proposed recovery procedure, the  CIP-EEM can further enhance the accuracy of the recovered solution and reduce both interpolation and pollution errors.
	
	The rest of this paper is organized as follows. Section~\ref{sc2} introduces the model problems and presents several preliminary results, including wave-number-explicit error estimates for the EEM and  elliptic projections, and some basic superconvergence results. In Section~\ref{sc3}, we define the curl recovery operator $\Gh$, establish its main properties, and prove the superconvergence  for the recovered curl. We also analyze the effect of the recovery procedure on the pollution error. Section~\ref{sc4} is devoted to the function value recovery operator $\Fh$ and its superconvergence analysis. Finally, Section~\ref{sc5} introduces a new CIP-EEM formulation and determines the optimal penalty parameters through a dispersion analysis. Numerical experiments are then presented to demonstrate the effectiveness of the proposed recovery procedures and the performance of the CIP-EEM.

	\section{Preliminaries}\label{sc2}
	
	\subsection{Spaces, norms, and notations}
	Let $D\subset\mathbb{R}^3$ be a bounded Lipschitz domain with boundary $\partial D$. For $s\ge0$, we use the standard Sobolev spaces
	$L^{\infty}(D)$ and $H^s(D)$ with norms $\|\cdot\|_{L^{\infty}(D)}$ and $\|\cdot\|_{H^s(D)}$.  The semi-norm of the space $H^s(D)$ is denoted by $|\cdot|_{H^s(D)}$, see
	\cite{adams2003sobolev,brenner2008mathematical,monk2003}.
	Boldface notation denotes vector-valued spaces, e.g., $\bH^s(D)$. When $D=\Omega$, we drop the subscript and write $\|\cdot\|:=\|\cdot\|_{L^2(\Omega)}$, $\|\cdot\|_s:=\|\cdot\|_{H^s(\Omega)}$  and
	$\|\cdot\|_{\Gamma}:=\|\cdot\|_{L^2(\Gamma)}$. For complex-valued fields, the $\bL^2$ inner product on $D$ is defined by
	$ (\bu,\bv)_D := \int_D \bu \cdot \overline{\bv}\,\mathrm{d}x$, 
	and the $\bL^2$ inner product on a boundary subset $\Sigma \subset \partial D$ is defined by $	\langle \bu,\bv \rangle_{\Sigma} := \int_{\Sigma} \bu \cdot \overline{\bv}\,\mathrm{d}s$.  We abbreviate $(\cdot,\cdot):=(\cdot,\cdot)_{\Omega}$ and
	$\langle\cdot,\cdot\rangle:=\langle\cdot,\cdot\rangle_{\Gamma}$.

	We employ the following Sobolev spaces on $\Omega$ (see, e.g., \cite{girault1979finite}):
	\eqn{
		&H_0^1(\Omega) :=  \{v\in H^1(\Omega): v|_{\Gamma}=0\},\qquad
		\bH(\curl;\Omega) :=  \{\bv\in \bL^2(\Omega): \curl\bv\in\bL^2(\Omega)\},\\
		&\bH_0(\curl;\Omega) :=  \{\bv\in\bH(\curl;\Omega): \bn\times\bv|_{\Gamma}=\mathbf{0}\},\qquad
		\bH(\dive;\Omega) :=  \{\bv\in \bL^2(\Omega): \dive\bv\in L^2(\Omega)\},\\
		&\bH_0(\dive;\Omega) :=  \{\bv\in\bH(\dive;\Omega): \bn\cdot\bv|_{\Gamma}=0\},\qquad
		\bH(\dive^0;\Omega) :=  \{\bv\in\bH(\dive;\Omega): \dive\bv=0\},\\
		&\bH_0(\dive^0;\Omega) :=  \bH_0(\dive;\Omega)  \cap \bH(\dive^0;\Omega),  \qquad \bX :=  \bH_0(\curl)\cap \bH(\dive^0), \\
		&\bH^j(\curl;\Omega) :=  \{\bv\in \bH^j(\Omega): \curl\bv\in\bH^j(\Omega)\} \;\; (j \geq 1).
	}

	For any integer \(j \ge 0\) and \(\bv \in \bH^j(\curl;\Omega)\), we define the norm
	\[
	\|\bv\|_{\bH^j(\curl)} := \big( \|\bv\|_{ j}^2 + \|\curl \bv\|_{ j}^2 \big)^{\frac{1}{2}}.
	\]
	Note that for \(j=0\), this definition reduces to the standard \(\bH(\curl)\)-norm.
	We refer to $\energy{\cdot} :=  \big(\|\curl\cdot\|^2+\ka^2\|\cdot\|^2\big)^{\frac{1}{2}}$ as the Maxwell energy norm. Throughout, $A\ls B$ means $A\le CB$ for some constant $C>0$ independent of the mesh size $h$ and the wave number $\ka$; $A\gtrsim B$ means $B\ls A$; and $A\simeq B$ means that both $A\ls B$ and $A\gtrsim B$ hold.
	Finally, we introduce two ``operators",  $\sharp$ and $\flat$, 
	to switch between abstract indices $\{1,2,3\}$ and the coordinate labels $\{x,y,z\}$: $ 	1^\sharp = x,\, 2^\sharp = y,\, 3^\sharp = z, \,
	x^\flat = 1,\, y^\flat = 2,\, z^\flat = 3$.
	Thus, the operator $\sharp$ maps integer indices to coordinate directions, 
	while $\flat$ performs the inverse mapping.	
	
	\subsection{Variational formulation and EEM}
	Define the energy space
	\begin{equation}\label{eq:Vspace}
		\V:=\bH_0(\curl;\Omega),
	\end{equation}
	and the sesquilinear form $\a:\V\times\V\to\mathbb{C}$
	\begin{equation}\label{eq:a_h}
		\a(\bu,\bv):=(\curl\bu,\curl\bv)-\ka^2(\bu,\bv).
	\end{equation}
	The weak formulation of \eqref{Maxwell} reads:
	\begin{equation}\label{eq:dvp}
		\text{Find }\E\in\V \text{ such that }\quad \a(\E,\bv)=(\f,\bv)\quad\forall\,\bv\in\V.
	\end{equation}
	Throughout this paper, we assume that  {$\ka \gtrsim 1$ and} $\ka^2$ is not an eigenvalue of the Maxwell problem. Following \cite[Section~2]{chen2026hybridizable}, the next lemma provides a stability estimate for   the solution $\E$ to \eqref{eq:dvp}.

	\begin{lemma}[Wavenumber-explicit stability for the PEC Maxwell problem] \label{lem:PECstab}
		Let $\Om\subset\R^3$ be a bounded, simply connected Lipschitz polyhedron with connected boundary. Let ${(\la_j,\bu_j)}_{j\geq 1}$ denote the positive Maxwell eigenpairs satisfying
		\begin{equation}
			\label{eq:maxwell-eigenproblem}
			(\curl\bu_j,\curl\bv)
			=\la_j(\bu_j,\bv)
			\qquad
			\forall\bv\in
			\boldsymbol H_0(\curl;\Om)\cap\boldsymbol H(\dive^0;\Om),
			\qquad
			\norm{\bu_j}=1,
		\end{equation}
		where
		$
		0<\la_1\leq\la_2\leq\cdots.
		$
		Assume that $\ka^2\neq\la_j$ for every $j\geq1$. Then problem \eqref{eq:dvp} has a unique weak solution and   
		\begin{equation}
			\label{eq:stability-E}
			\ka^2 \norm{\E}
			+\ka \norm{\curl\E}
			\ls \Mk\norm{\f},\quad\text{ {where }}\Mk:=
			\sup_{j\geq1}
			\abs{\frac{\la_j+\ka^2}{\la_j-\ka^2}}.
		\end{equation}
		
	\end{lemma}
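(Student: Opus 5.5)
Decompose both $\E$ and $\f$ with the Helmholtz decomposition and handle the gradient part and the divergence-free part separately. The divergence-free part is then expanded in the Maxwell eigenbasis $\{\bu_j\}$, on which the operator acts diagonally.

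\emph{Step 1: the gradient part.} Since $\Om$ is simply connected with connected boundary, we write
\[
\f=\nabla p+\f_0,\qquad p\in H_0^1(\Om),\quad \f_0\in\bH(\dive^0;\Om),
\]
which is an $\bL^2$-orthogonal decomposition, so $\|\nabla p\|+\|\f_0\|\ls\|\f\|$. Test \eqref{eq:dvp} with $\bv=\nabla q$, $q\in H_0^1(\Om)$; this is admissible because $\nabla H_0^1\subset\V$. The curl term vanishes and we get $-\ka^2(\E,\nabla q)=(\f,\nabla q)$. Hence the gradient component of $\E$ equals $-\ka^{-2}\nabla p$, and
\[
\ka^2\|\nabla(\cdot)\|\le\|\f\|.
\]
This component is curl-free, so it does not contribute to $\curl\E$.

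\emph{Step 2: the divergence-free part.} Set $\E_0:=\E+\ka^{-2}\nabla p$. Then $\E_0\in\bX$ and
\[
\a(\E_0,\bv)=(\f_0,\bv)\qquad\forall\,\bv\in\bX.
\]
The embedding $\bX\hookrightarrow\bL^2$ is compact for a Lipschitz polyhedron, so $\{\bu_j\}$ is an orthonormal basis of $\bL^2\cap\bH(\dive^0)$. Expand $\E_0=\sum_j e_j\bu_j$ and $\f_0=\sum_j f_j\bu_j$. Testing with $\bu_j$ and using \eqref{eq:maxwell-eigenproblem} gives $(\la_j-\ka^2)e_j=f_j$. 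Therefore
\[
\ka^4\|\E_0\|^2=\sum_j\frac{\ka^4}{(\la_j-\ka^2)^2}|f_j|^2,
\qquad
\ka^2\|\curl\E_0\|^2=\sum_j\frac{\ka^2\la_j}{(\la_j-\ka^2)^2}|f_j|^2 .
\]

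\emph{Step 3: comparing the multipliers with $\Mk^2$.} This is the only genuinely quantitative point. We have
\[
\ka^4\le(\la_j+\ka^2)^2,\qquad 4\ka^2\la_j\le(\la_j+\ka^2)^2 .
\]
So both multipliers are bounded by $\Mk^2$, and consequently $\ka^2\|\E_0\|+\ka\|\curl\E_0\|\ls\Mk\|\f_0\|$.

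\emph{Step 4: conclusion.} Combine Step 3 with Step 1. Since $\Mk\ge1$, the gradient contribution $\ka^2\cdot\ka^{-2}\|\nabla p\|\le\|\f\|$ is absorbed into $\Mk\|\f\|$, which yields \eqref{eq:stability-E}.

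\emph{Well-posedness.} Uniqueness follows by taking $\f=0$: Step 1 kills the gradient part, and the eigen-expansion then gives $e_j=0$ for all $j$, since $\ka^2\neq\la_j$. Existence follows by defining $\E$ through the series. It lies in $\V$ because the coefficients satisfy $\sum_j\la_j|e_j|^2<\infty$ by Step 3.

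The main obstacle is justifying the spectral framework: the compact embedding $\bX\hookrightarrow\bL^2$ (Weber's theorem on Lipschitz domains) and the completeness of $\{\bu_j\}$ in $\bH(\dive^0)\cap\bL^2$. These are standard facts and I would cite them rather than prove them.
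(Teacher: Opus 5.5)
Your proof is correct and is essentially the paper's argument: the same $\bL^2$-orthogonal splitting $\f=\nabla p+\f_0$, the same gradient component $-\ka^{-2}\nabla p$ of $\E$, the same eigen-expansion of the divergence-free part with the bounds $\ka^4\le(\la_j+\ka^2)^2$ and $4\ka^2\la_j\le(\la_j+\ka^2)^2$, and the same absorption of the gradient part via $\Mk\ge1$. The only differences are minor: the paper constructs $\E=\E_{\rm s}-\ka^{-2}\nabla\phi$ directly instead of identifying the gradient part a priori by testing with $\nabla q$, and the topological hypotheses are needed not for the decomposition of $\f$ (valid on any Lipschitz domain) but to exclude Dirichlet harmonic fields, i.e.\ a zero eigenvalue, which is what makes the \emph{positive} eigenfunctions $\{\bu_j\}$ complete in $\bH(\dive^0;\Om)$.
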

	
	\begin{proof} 
		{\emph{Step  (1)}. } We first prove \eqref{eq:stability-E} for the case $\dive\f =0$. Since $\Om$ is simply connected and
		$\pa\Om$ is connected, there are no nontrivial Dirichlet harmonic fields.
		Consequently, the eigenfunctions $\{\bu_j\}_{j\geq1}$ form an
		$\boldsymbol L^2$-orthonormal basis of the divergence-free space $\bH(\dive^0;\Om)$ (see, e.g. \cite{weber1980local}, \cite[Theorem~ 2.37]{mclean2000strongly}).
		Under the assumption $\dive\f =0$, we may therefore write
		\[
		\f=\sum_{j=1}^\infty f_j\bu_j,
		\qquad
		f_j:=(\f,\bu_j),
		\qquad
		\sum_{j=1}^\infty |f_j|^2=\norm{\f}^2.
		\]
		The solution of \eqref{eq:dvp} is given by
		\begin{equation}
			\label{eq:E-spectral-expansion}
			\E=\sum_{j=1}^\infty
			\frac{f_j}{\la_j-\ka^2}\bu_j.
		\end{equation}
		Indeed, inserting \eqref{eq:E-spectral-expansion} into
		\eqref{eq:dvp} and using
		\eqref{eq:maxwell-eigenproblem} proves the variational equation.
		
		Note that for every  {$\la_j$}, we have
		\[
		\frac{\ka^2}{| {\la_j}-\ka^2|}
		\leq  \Mk\quad\text{and}\quad
		{\frac{2\ka\sqrt{\la_j}}{|\la_j-\ka^2|}}
		\leq  \Mk.
		\]
		Thus by using Parseval's identity,		we obtain
		\[
		\ka^4\norm{\E}^2
		\leq  \Mk^2\norm{\f}^2
		\qaq 
		\ka^2 \norm{\curl\E}^2 {=\ka^2
			\sum_{j=1}^\infty
			\frac{\la_j|f_j|^2}{|\la_j-\ka^2|^2}}
		\leq  {\frac14} \Mk^2\norm{\f}^2.
		\]
		This proves \eqref{eq:stability-E} for the case $\dive\f =0$.
		
		{\emph{Step  (2)}. }
		We now prove for arbitrary $\f\in\boldsymbol L^2(\Om)$. Let $\phi\in H_0^1(\Om)$ be the unique
		solution of
		\begin{equation}
			\label{eq:helmholtz-potential}
			(\na\phi,\na q)=(\f,\na q)
			\qquad\forall q\in H_0^1(\Om),
		\end{equation}
		and set $
		\g:=\f-\na\phi$. 
		Then $ 	(\g,\na q)=0
		\; \forall q\in H_0^1(\Om)$  
		and the decomposition $ \f = \g+\na\phi $
		is $\boldsymbol L^2$-orthogonal. In particular, it holds that
		\eq{  	\norm{\g}\leq\norm{\f} \qaq \norm{\na\phi}\leq\norm{\f}. \label{eq:helmholtz-stability}}
		Let $\E_{\rm s}$ be the solution corresponding to the divergence-free
		right-hand side $\g$. By {\emph{Step  (1)} },
		\begin{equation}
			\label{eq:Es-bound}
			\ka^2\norm{\E_{\rm s}}
			+\ka\norm{\curl\E_{\rm s}}
			\ls \Mk\norm{\g}.
		\end{equation}
		Define
		\begin{equation}
			\label{eq:E-general-decomposition}
			\E:=\E_{\rm s}-\frac{1}{\ka^2}\na\phi.
		\end{equation}
		It is easy to check that  $\E\in\boldsymbol H_0(\curl;\Om)$  
		and  
		\[
		\curl(\curl\E)-\ka^2\E
		=
		\curl(\curl\E_{\rm s})-\ka^2\E_{\rm s}
		+\na\phi
		=
		\g+\na\phi
		=
		\f.
		\]
		Thus \eqref{eq:E-general-decomposition} is precisely the solution of
		\eqref{eq:dvp}. 	Using \eqref{eq:Es-bound} and \eqref{eq:helmholtz-stability}, we obtain
		\begin{align*}
			&\ka^2 \norm{\E}
			+\ka\norm{\curl\E} \leq 
			\ka^2\norm{\E_{\rm s}}
			+\ka\norm{\curl\E_{\rm s}}
			+\frac{\ka^2}{\ka^2}\norm{\na\phi}
			\ls 
			\Mk\norm{\g}+ \norm{\na\phi}
			\ls \Mk \norm{\f},
		\end{align*}
		which proves \eqref{eq:stability-E}.
		
		Finally, uniqueness follows from the same orthogonal decomposition:
		the divergence-free component vanishes because $\ka^2$ is not a positive Maxwell
		eigenvalue, while the gradient component vanishes because $\ka > 0$.
	\end{proof}
	
	Let $\Csol := \ka^{-1} \Mk$,  Lemma~\ref{lem:PECstab} guarantees the following stability estimate  
	\begin{equation}\label{eq:solstab}
		\energy{\E} \ls  \Csol\,\|\f\| 
	\end{equation}
	holds.  Whenever $\E \in \bH^3(\Om)$, we denote
	\eq{\label{CE} C_{\E}: = \|\E\|_1+ \ka^{-1}\|\E\|_2 + \ka^{-2}\|\E\|_3.} 
	\begin{remark}  
		Define $\delta_\ka:=
		\inf_{j\ge1}|\la_j-\ka^2|.$ Then, for sufficiently large $\ka$, we have 
		\[
		\Mk\simeq\frac{\ka^2}{\delta_\ka},
		\qquad
		\Csol\simeq\frac{\ka}{\delta_\ka}.
		\]
		If $\Omega$ is  a convex polyhedral domain, the spectral-gap
		argument in \cite[Remark~2.1]{chen2026hybridizable}   corresponds to
		$\delta_\ka\lesssim\ka^{-1}$ and hence yields
		$\Csol\gtrsim\ka^2$.  
	\end{remark}

	To establish the superconvergence analysis (from Subsection~\ref{subsec:super} onward), we assume that the computational domain $\Omega$ is a rectangular box. Let $\Th$ be a   uniform partition of $\Omega$ into shape-regular cuboidal elements with side lengths $h_x,h_y,h_z$. Let $h_K$ denote the diameter of $K\in\Th $,  and define $h:=\max_{K\in\Th}h_K$. 
	We define the lowest-order N\'ed\'elec edge element space of the first type by 
	\begin{equation}\label{eq:FEMspace} 
		\begin{aligned}  {\Nh} &:= \Big\{ \bv_h\in\bH(\curl;\Omega): \bv_h|_K\in Q_{0,1,1}\times Q_{1,0,1}\times Q_{1,1,0} \quad\forall K\in\Th \Big\},\\ 
			\V_h &:= \Nh\cap\V = \Nh\cap\bH_0(\curl;\Omega),
		\end{aligned} 
	\end{equation} 
	where $Q_{\alpha,\beta,\gamma}$ denotes the space of tensor-product polynomials of degree at most $\alpha$, $\beta$, and $\gamma$ in the variables $x$, $y$, and $z$, respectively.
	The  EEM  for \eqref{Maxwell} reads: find $\Eh\in\V_h$ such that
	\begin{equation}\label{eq:EEM}
		\a(\Eh,\bv_h)=(\f ,\bv_h)\qquad\forall\,\bv_h\in\V_h.
	\end{equation}
	Clearly, the following Galerkin orthogonality holds:
	\begin{equation}\label{eq:orthogonality}
		\a(\E-\Eh,\bv_h)=0\qquad\forall\,\bv_h\in\V_h.
	\end{equation}
	
	\subsection{Interpolation estimates}
	Let $\mathcal{E}(\Th)$ be the set of edges in $\Th$.
	For a given function $\bv \in \bH^1(\curl;\Omega)\cup \Vh$, its degrees of freedom are the edge moments. In particular, for each mesh edge $e$ with unit direction vector $\btau_e$, the corresponding  edge moment of $\bv$ is  
	\[
	m_e(\bv  ) := \int_e \bv   \cdot \btau_e \, ds .
	\]
	
	Denote by $\pih : \bH^1(\curl;\Omega)\rightarrow\Nh $  the lowest-order edge element interpolation operator (see, e.g., \cite{Nedelec1980,monk2003}) such that $m_e(\bv) = m_e(\pi_h \bv)$ $\forall e\in \mathcal{E}(\Th)$.  If, in addition, $\bv\in\V$, then $\pi_h\bv\in\V_h$.
	
	We have the following interpolation estimates, the proof of which can be found in  \cite[Theorem~10.2]{chaumont2024sharp}.
	
	\begin{lemma}\label{lem:interpEst}
		For any $\bv \in  \bH^1(\curl;\Omega)$,  we have
		\eq{
			\|\bv - \pih \bv\| & \ls h | \bv|_{1} + h^2 |\curl\bv|_1,  \label{eq:interpL2} \\	
			\|\curl(\bv - \pih \bv ) \| & \ls h  | \curl  \bv |_{1}.  	
			\label{eq:interpCurl}
		}
	\end{lemma}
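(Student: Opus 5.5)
We will prove both estimates of Lemma~\ref{lem:interpEst} element by element, using a scaling argument on each cuboid $K\in\Th$ combined with the Bramble--Hilbert lemma. We then sum the local bounds over $K$. The two ingredients that make this work are the locality of $\pih$, since the edge moments on $K$ determine $\pih\bv|_K$, and its commuting-diagram property $\curl\pih\bv=\Pi_h^{\rm div}\curl\bv$. Here $\Pi_h^{\rm div}$ is the lowest-order Raviart--Thomas interpolant on cuboids, defined by the face moments $\int_f \bu\cdot\bn_f\,ds$. The identity follows from Stokes' theorem on each face: the face flux of $\curl\bv$ equals the circulation of $\bv$ along the boundary edges of the face.

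\emph{Curl estimate.} Take $\bu=\curl\bv\in\bH^1(K)$. The face moments are well defined for $\bH^1$ fields by the trace theorem, and $\Pi^{\rm div}$ preserves constant vectors. Map $K$ affinely (by a diagonal map) to the reference cube $\hat K$ and use the Piola transform, which is compatible with face fluxes on a diagonal map. On $\hat K$ the operator $\hat\bu\mapsto \hat\bu-\hat\Pi^{\rm div}\hat\bu$ is bounded from $\bH^1(\hat K)$ to $\bL^2(\hat K)$ and annihilates constants. Bramble--Hilbert then gives $\|\hat\bu-\hat\Pi^{\rm div}\hat\bu\|_{\hat K}\ls|\hat\bu|_{1,\hat K}$, and scaling back yields $\|\bu-\Pi^{\rm div}\bu\|_K\ls h|\bu|_{1,K}$. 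Shape regularity is needed here so that the different scaling factors along $x,y,z$ are all comparable to $h$. Squaring and summing over $K$ gives \eqref{eq:interpCurl}.

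\emph{$\bL^2$ estimate.} On $\hat K$ the edge moments are not bounded on $\bH^1$ in three dimensions, since edge traces of $\bH^1$ functions need not exist. They are bounded on $\{\hat\bv\in\bH^1(\hat K):\curl\hat\bv\in\bH^1(\hat K)\}$, however, which is the standard result for $\bH^1(\curl)$ (see \cite{monk2003}). So $\hat\pi$ is bounded from this space into $\bL^2(\hat K)$, with
\[
\|\hat\bv-\hat\pi\hat\bv\|_{\hat K}\ls \|\hat\bv\|_{1,\hat K}+\|\curl\hat\bv\|_{1,\hat K}.
\]
To sharpen this to seminorms, write $\hat\bv=\bc+\hat\bv_1$ where $\bc$ is the mean of $\hat\bv$. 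Constants are reproduced by $\hat\pi$, so it suffices to bound $\hat\bv_1-\hat\pi\hat\bv_1$. By Poincaré, $\|\hat\bv_1\|_{1}\ls|\hat\bv|_1$. For the curl term, $\|\curl\hat\bv\|\ls|\hat\bv|_1$. The remaining piece $|\curl\hat\bv|_1$ appears as is, so on $\hat K$ we get the bound $|\hat\bv|_{1,\hat K}+|\curl\hat\bv|_{1,\hat K}$.

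\emph{Scaling back.} We pull back with the covariant transform $\bv=B^{-T}\hat\bv\circ F^{-1}$, where $B\simeq h$. For the $\bL^2$ norm, $\|\bv\|_K\simeq h^{-1}h^{3/2}\|\hat\bv\|_{\hat K}$. For the derivative terms, $|\hat\bv|_{1,\hat K}\simeq h^{2}h^{-3/2}|\bv|_{1,K}$ and $|\curl\hat\bv|_{1,\hat K}\simeq h^{3}h^{-3/2}|\curl\bv|_{1,K}$, the latter because the curl transforms with the Piola factor $\det B\,B^{-1}\simeq h^2$. Combining these gives
\[
\|\bv-\pih\bv\|_K\ls h|\bv|_{1,K}+h^2|\curl\bv|_{1,K},
\]
and summing over $K$ yields \eqref{eq:interpL2}.

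The main obstacle is the boundedness of the edge moments on $\hat K$, which requires the $\curl\hat\bv\in\bH^1$ regularity. We would handle it by a Sobolev embedding argument of the type in \cite{monk2003}, or by an edge-trace lemma.
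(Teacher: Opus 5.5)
Your argument is correct. The paper does not prove this lemma itself; it defers to \cite[Theorem~10.2]{chaumont2024sharp}. What you have written is therefore a self-contained replacement rather than a variant of an argument in the text.

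Both halves are sound.
\begin{itemize}
\item For the curl bound, you use the commuting identity $\curl\pih\bv=\Pi_h^{\rm div}\curl\bv$, obtained from Stokes on each face, together with Bramble--Hilbert for the face-moment interpolant under the contravariant Piola map.
\item For the $\bL^2$ bound, you use boundedness of $\hat\pi$ on $\bH^1(\curl;\hat K)$, subtract the mean (constants are reproduced), apply Poincar\'e, and then scale $\bv$ covariantly and $\curl\bv$ contravariantly, each separately.
\end{itemize}
Keeping these two scalings separate is exactly what yields the sharper term $h^2|\curl\bv|_1$ instead of the textbook bound $h(|\bv|_1+|\curl\bv|_1)$. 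Your exponents check out: $h^{1/2}\cdot h^{1/2}|\bv|_{1,K}$ and $h^{1/2}\cdot h^{3/2}|\curl\bv|_{1,K}$.

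The one external input is the boundedness of the edge moments on $\{\hat\bv\in\bH^1(\hat K):\curl\hat\bv\in\bH^1(\hat K)\}$. This is standard: it follows from the criterion requiring $\bH^{s}$ regularity with $s>1/2$ and curl in $\bL^p$ with $p>2$, since $\bH^1(\hat K)\hookrightarrow\bL^6(\hat K)$. You correctly flagged it as the only nontrivial analytic ingredient.

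Your route has a practical advantage. It is written directly for the shape-regular cuboids with diagonal affine maps that the paper actually uses. Relying on the citation instead requires checking that the referenced result covers this element and mesh type.
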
 
	It is clear that, when $\ka h \ls 1$, we have
	\eq{\label{eq:interpEn}
		\norme{\bv - \pih \bv } \ls h \big( | \curl \bv |_{1} + \ka |\bv|_1 \big).
	}

	\subsection{Helmholtz decomposition and embedding results} Consider the following Lagrange finite element spaces  
	\begin{align*}
		U_{h} & := \left\{ u \in H^{1}(\Omega) : u|_{K} \in Q_{1,1,1}(K) \,\, \forall K \in \mathcal{T}_h \right\}  \qaq U_{h}^{0}  := U_{h} \cap H_{0}^{1}(\Omega).
	\end{align*}
	It is evident that $ \nabla U_{h}^{0}$ forms a subspace of $ \V_{h}$. Our analysis makes critical use of the continuous and discrete  Helmholtz decompositions, which are summarized in the following lemma; the proof (see \cite[Lemma~4.5]{Hiptmair2002femcem} or \cite[Lemma~7.6]{monk2003}) is omitted.
	
	\begin{lemma}[Helmholtz decomposition] \label{lem:HD} 
		For any $\bm{v}\in\V$, there exist $\bL^2$-orthogonal projections $\textsf{P}^{0}$ and $\textsf{P}^{\perp}$ such that 
		\eq{\label{HD1}
			\bv = \textsf{P}^0\bv +\textsf{P}^{\perp}\bv,
		}
		where $\textsf{P}^0\bv \in \nabla H^1_0(\Omega) \cap \V$, $\textsf{P}^{\perp}\bv \in \bH(\dive^0;\Omega)\cap\V$, and  $	\big( \textsf{P}^0 \bv, \textsf{P}^{\perp}\bv \big) =0 $. Moreover, if $\vh \in \V_h$, there exist discrete projections $\textsf{P}^0_h:\Vh \rightarrow \nabla U_h^0$ and $\textsf{P}^{\perp}_h: \Vh \rightarrow \Vh$ satisfying
		\begin{align}
			\vh &= \textsf{P}^0_h \vh + \textsf{P}^{\perp}_h \vh, \label{HD1dis} \\
			\big(\textsf{P}^{\perp}_h \vh, \nabla \psi_h\big) &= 0 \quad \forall \psi_h \in U_h^0, \label{HD1div} \\
			\|\textsf{P}^{\perp}\vh - \textsf{P}^{\perp}_h \vh\| &\ls h \|\curl \vh\|. \label{HD1w}
		\end{align}
	\end{lemma}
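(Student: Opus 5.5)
The plan is the standard argument: build both decompositions from the scalar Dirichlet problem, then obtain \eqref{HD1w} by a duality argument that compares the two divergence-free parts.

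\emph{Continuous decomposition.} Given $\bv\in\V$, let $p\in H^1_0(\Omega)$ solve $(\nabla p,\nabla q)=(\bv,\nabla q)$ for all $q\in H^1_0(\Omega)$. Set $\textsf{P}^0\bv:=\nabla p$ and $\textsf{P}^\perp\bv:=\bv-\nabla p$. Since $p$ vanishes on $\Gamma$, $\nabla p\times\n=\bm 0$ there, so both parts lie in $\V$. Moreover $\textsf{P}^\perp\bv$ is weakly divergence free, and $L^2$-orthogonality follows directly from the variational equation.

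\emph{Discrete decomposition.} Given $\vh\in\Vh$, let $p_h\in U_h^0$ solve $(\nabla p_h,\nabla\psi_h)=(\vh,\nabla\psi_h)$ for all $\psi_h\in U_h^0$. Define $\textsf{P}^0_h\vh:=\nabla p_h\in\nabla U_h^0\subset\Vh$ and $\textsf{P}^\perp_h\vh:=\vh-\nabla p_h$. Then \eqref{HD1dis} and \eqref{HD1div} hold by construction.

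\emph{Estimate \eqref{HD1w}.} This is the only substantive step. Write $\bw:=\textsf{P}^\perp\vh$ and $\bw_h:=\textsf{P}^\perp_h\vh$. Both have the same curl, so $\bw-\bw_h=\nabla(p_h-p)$ is a gradient.

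Since $\Omega$ is a (convex) box or, more generally, a Lipschitz polyhedron, the space $\bX$ embeds in $\bH^{1/2+s}$. On a box we can use the $\bH^1$ regularity $\|\bw\|_1\ls\|\curl\bw\|=\|\curl\vh\|$.

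Next, apply the commuting interpolation $\pih$, or rather a bounded commuting projection if $\bw$ is only in $\bH^{s}$. Because $\curl\bw=\curl\vh$ is piecewise constant (it lies in the discrete div-conforming space), $\pih\bw$ is well defined. By the commuting diagram, $\curl\pih\bw=\curl\vh$, so $\vh-\pih\bw\in\Vh$ is curl free. Exactness of the discrete de Rham complex on the simply connected domain then gives $\vh-\pih\bw=\nabla\phi_h$ with $\phi_h\in U_h^0$.

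Now use orthogonality. From \eqref{HD1div}, $\bw_h$ is $L^2$-orthogonal to $\nabla U_h^0$, and we have $\bw_h=\pih\bw+\nabla(\phi_h-p_h)$. Hence $\bw_h$ is the $L^2$-projection of $\pih\bw$ onto the orthogonal complement of $\nabla U_h^0$. This gives
\[
\|\bw-\bw_h\|^2=(\bw-\bw_h,\bw-\pih\bw)+(\bw-\bw_h,\pih\bw-\bw_h).
\]
The last term vanishes. Indeed, $\pih\bw-\bw_h$ is a discrete gradient $\nabla\chi_h$, and $(\bw,\nabla\chi_h)=0$ because $\bw$ is divergence free with $\chi_h\in H^1_0$, while $(\bw_h,\nabla\chi_h)=0$ by \eqref{HD1div}. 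Therefore
\[
\|\bw-\bw_h\|\le\|\bw-\pih\bw\|\ls h|\bw|_1+h^2|\curl\bw|_1 .
\]
The second term is zero elementwise, since $\curl\vh$ is constant on each element. Combining this with $|\bw|_1\ls\|\curl\vh\|$ yields \eqref{HD1w}.

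\emph{Main obstacle.} The delicate points are the regularity $\|\bw\|_1\ls\|\curl\vh\|$ and the definedness of $\pih\bw$ when $\bw$ lacks $\bH^1$ regularity. On the rectangular box used later this regularity holds. For a general polyhedron I would instead use the Hiptmair / Schöberl smoothed commuting projection and obtain $h^{s}$, citing \cite{Hiptmair2002femcem}.
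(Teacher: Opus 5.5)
Your route is the standard one, and it is exactly the proof in \cite[Lemma~4.5]{Hiptmair2002femcem} and \cite[Lemma~7.6]{monk2003}, which the paper cites instead of writing out. You interpolate $\boldsymbol{w}=\textsf{P}^{\perp}\vh$ with the commuting edge interpolant, use discrete exactness to get $\pih\boldsymbol{w}-\textsf{P}^{\perp}_h\vh\in\nabla U_h^0$, and conclude $\|\textsf{P}^{\perp}\vh-\textsf{P}^{\perp}_h\vh\|\le\|\boldsymbol{w}-\pih\boldsymbol{w}\|$ by orthogonality. That part is correct. There is, however, one false step that is specific to cuboidal meshes.

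For $\vh|_K\in Q_{0,1,1}\times Q_{1,0,1}\times Q_{1,1,0}$ the curl is not elementwise constant. One only has $\curl\vh|_K\in Q_{1,0,0}\times Q_{0,1,0}\times Q_{0,0,1}$; for instance, $\curl(0,0,xy)=(x,-y,0)$. So the term $h^2|\curl\boldsymbol{w}|_1$ does not vanish. Moreover, $\curl\vh$ has only continuous normal components, so $\boldsymbol{w}\notin\bH^1(\curl;\Omega)$ in general, and Lemma~\ref{lem:interpEst} cannot be invoked globally as you do.

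The repair is routine. First use the elementwise bound $\|\boldsymbol{w}-\pih\boldsymbol{w}\|_{\bL^2(K)}\ls h_K|\boldsymbol{w}|_{1,K}+h_K^2|\curl\vh|_{1,K}$, which follows by scaling and a Bramble--Hilbert argument after subtracting $\bc+\frac12\bb\times\bx$ (this field belongs to the local edge space). Then apply the inverse inequality $h_K|\curl\vh|_{1,K}\ls\|\curl\vh\|_{\bL^2(K)}$. Together these give $\|\boldsymbol{w}-\pih\boldsymbol{w}\|\ls h(|\boldsymbol{w}|_1+\|\curl\vh\|)\ls h\|\curl\vh\|$. This is the kind of estimate, for fields whose curl lies elementwise in the discrete $\bH(\dive)$ space, on which the cited proofs rely.

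For the same reason, the well-definedness of $\pih\boldsymbol{w}$ should be justified by $\curl\boldsymbol{w}$ being piecewise polynomial, hence in $L^p$ with $p>2$, not by its being piecewise constant.

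Two smaller remarks. Exactness in $\Vh$ with vanishing tangential trace needs $\Gamma$ connected as well as $\Omega$ simply connected; both hold for the box. And the $\bO(h)$ rate in \eqref{HD1w} genuinely requires convexity, as you note.
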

	
	The following lemma allows one to deduce improved smoothness of a vector field from the smoothness of its curl, provided suitable structural conditions hold. This lemma is a direct consequence of \cite[Theorem~2.17]{amrouche1998vector}.
	\begin{lemma}[Embedding results]\label{lem:shift}
		Suppose that $\Om$ is a  convex polyhedron. For any $\bv\in \bX=\bH_0(\curl)\cap \bH(\dive^0)$ or $\bv \in \bH (\curl)\cap\bH_0(\dive ^0,\Omega)$, we have $\bv \in \bH^{1}(\Om )$ and
		\begin{equation}
			\|\bv\|_{1} \ls \|\curl \bv\|.
			\label{eq:shift1}
		\end{equation}
	\end{lemma}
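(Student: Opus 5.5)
The statement is the embedding result of Lemma~\ref{lem:shift}. I would derive it from the Friedrichs--Gaffney-type inequalities for convex domains (\cite[Theorem~2.17]{amrouche1998vector}), rather than reprove it from scratch. The plan has three steps: bring $\bv$ into the form covered by that theorem, get $\bH^1$ regularity, and then remove the divergence and the lower-order $\bL^2$ term.

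\emph{Step 1: regularity with the full norm.} For convex $\Om$, that theorem says $\bH_0(\curl)\cap\bH(\dive)$ and $\bH(\curl)\cap\bH_0(\dive)$ embed continuously in $\bH^1(\Om)$, with
\[
\|\bv\|_1\ls \|\bv\|+\|\curl\bv\|+\|\dive\bv\|.
\]
Convexity is what makes this hold without reentrant-corner singularities. In both cases of the lemma, $\dive\bv=0$, so the divergence term drops out.

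\emph{Step 2: absorb the $\bL^2$ term.} I need $\|\bv\|\ls\|\curl\bv\|$ on $\bX$ and on $\bH(\curl)\cap\bH_0(\dive^0)$. I would argue by contradiction with compactness.
\begin{itemize}
\item Suppose there are $\bv_n$ with $\|\bv_n\|=1$ and $\|\curl\bv_n\|\to0$.
\item By Step~1 they are bounded in $\bH^1$, so by Rellich a subsequence converges in $\bL^2$ to some $\bv$.
\item The limit satisfies $\curl\bv=0$, $\dive\bv=0$, the same boundary condition, and $\|\bv\|=1$.
\end{itemize}
Such a $\bv$ is a harmonic field: a Dirichlet field in the first case, a Neumann field in the second.

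\emph{Step 3: rule out harmonic fields.} This is where topology enters.
\begin{itemize}
\item A convex domain is simply connected, which kills Neumann harmonic fields: $\bv=\nabla p$, and then $\Delta p=0$ with $\partial_\n p=0$ forces $p$ to be constant.
\item A convex domain has connected boundary, which kills Dirichlet harmonic fields: $\bv=\nabla p$ with $p$ constant on $\Ga$ and harmonic, so $p$ is constant. This matches the fact already used in the proof of Lemma~\ref{lem:PECstab}.
\end{itemize}
This contradicts $\|\bv\|=1$. Combining Steps 1--3 gives \eqref{eq:shift1}.

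The only delicate point is Step~1, the $\bH^1$ regularity on a nonsmooth convex polyhedron. I would cite it directly rather than rederive it. If a self-contained argument were needed, I would approximate $\Om$ by smooth convex domains and use the integration-by-parts identity
\[
\|\nabla\bv\|^2=\|\curl\bv\|^2+\|\dive\bv\|^2-\text{(boundary curvature term)},
\]
in which the curvature term has a favorable sign for convex boundaries, and then pass to the limit.
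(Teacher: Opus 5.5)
Your proposal is correct and takes the same route as the paper, which obtains the lemma as a direct consequence of \cite[Theorem~2.17]{amrouche1998vector}. Your compactness argument ruling out Dirichlet and Neumann harmonic fields is a valid way to make explicit the removal of the $\bL^2$ term, which the paper leaves implicit. If you instead use the seminorm form $|\bv|_1^2\le\|\curl\bv\|^2+\|\dive\bv\|^2$ available on convex domains, a Poincar\'e inequality for fields with vanishing tangential or normal trace achieves the same without any topological input.
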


	\subsection{Auxiliary Maxwell problems}
	
	Given $\fp \in \bL^2(\Om)$, we consider the following two auxiliary problems for the vector field $\w$:
	\begin{equation}
		\left\{
		\begin{array}{rll}
			\curl\curl \w + \ka^2 \w &= \fp, & \quad \text{in } \Om, \\
			\w \times \n &= \bm{0}, & \quad \text{on } \Ga,
		\end{array}
		\right.
		\label{eq:auxProb2}
	\end{equation}
	and
	\begin{equation}
		\left\{
		\begin{array}{rll}
			\curl\curl \w - \ka^2 \w + 2\ka^2 \textsf{P}^{\perp}\w &= \fp, & \quad \text{in } \Om, \\
			\w \times \n &= \bm{0}, & \quad \text{on } \Ga.
		\end{array}
		\right.
		\label{eq:auxProb}
	\end{equation}
	
	Accordingly, we introduce the sesquilinear forms {$\tilde{\a}(\cdot,\cdot)$ and} $\hat{\a}(\cdot,\cdot): \V \times \V \to \C$ defined by
	\begin{equation}
		\tilde{\a}(\bu,\bv) = (\curl \bu, \curl \bv) + \ka^2 (\bu,\bv),
		\label{eq:tila}
	\end{equation}
	\begin{equation}
		\hat{\a}(\bu,\bv) = (\curl \bu, \curl \bv) - \ka^2 (\bu,\bv) + 2\ka^2 (\textsf{P}^{\perp}\bu, \textsf{P}^{\perp}\bv).
		\label{eq:hata}
	\end{equation}
	
	The weak formulations of \eqref{eq:auxProb2} and \eqref{eq:auxProb} read: find $\w \in \V$ such that
	\begin{equation}
		\tilde{\a}(\w,\bv) = (\fp,\bv) \quad \forall \bv \in \V,
		\label{eq:bVP2}
	\end{equation}
	and {find $\w \in \V$ such that}
	\begin{equation}
		\hat{\a}(\w,\bv) = (\fp,\bv) \quad \forall \bv \in \V,
		\label{eq:bVP}
	\end{equation}
	respectively. 
	
	The following lemma summarizes the fundamental properties of these sesquilinear forms.

	\begin{lemma}\label{lem:bcoev}
		
		{The following estimates hold for any $\bu,\bv \in \V$:}
		\begin{align}
			| {\a}(\bu,\bv)| +	|\tilde{\a}(\bu,\bv)| + |\hat{\a}(\bu,\bv)|
			&\ls \norme{\bu}\,\norme{\bv},
			\label{eq:bCont} \\
			\hat{\a}(\bu,\bu)
			&\ge \norme{\bu}^{2}-2\ka^{2}\|\bu\|^{2},
			\label{eq:bgarding} \\
			\tilde{\a}(\bu,\bu)
			&=\norme{\bu}^{2},
			\label{eq:b2Coer} \\
			\sup_{\bm{0}\ne\bv\in\V}
			\frac{|\hat{\a}(\bu,\bv)|}{\norme{\bv}}
			&\gtrsim \norme{\bu}.
			\label{eq:inf-sup}
		\end{align}
		Moreover,  {there exists a constant $\tilde C_0>0$ independent of $\ka$ and $h$ such that,} if $\ka h\le \tilde C_0$, then the following discrete inf--sup
		condition holds:
		\begin{equation}
			\sup_{\bm{0}\ne\bv_h\in\V_h}
			\frac{|\hat{\a}(\bu_h,\bv_h)|}{\norme{\bv_h}}
			\gtrsim \norme{\bu_h}
			\qquad \forall\,\bu_h\in\V_h.
			\label{eq:discrete-inf-sup}
		\end{equation}
	\end{lemma}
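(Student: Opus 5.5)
The first three estimates follow directly from the definitions, and both inf--sup conditions come from one sign-flipping test function built from the (continuous, resp.\ discrete) Helmholtz decomposition of Lemma~\ref{lem:HD}. The only place where smallness of $\ka h$ enters is the discrete inf--sup condition, through \eqref{HD1w}.

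\emph{Continuity, G\aa rding inequality and coercivity.} Since $\textsf{P}^{\perp}$ is an $\bL^2$-orthogonal projection, $\|\textsf{P}^{\perp}\bu\|\le\|\bu\|$. The Cauchy--Schwarz inequality applied to each term then gives \eqref{eq:bCont}. Identity \eqref{eq:b2Coer} is the definition of $\tilde{\a}$. For \eqref{eq:bgarding}, drop the nonnegative term $2\ka^2\|\textsf{P}^{\perp}\bu\|^2$:
\[
\hat{\a}(\bu,\bu)\ge\|\curl\bu\|^2-\ka^2\|\bu\|^2=\norme{\bu}^2-2\ka^2\|\bu\|^2 .
\]

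\emph{Continuous inf--sup.} Given $\bu\in\V$, take $\bv:=\textsf{P}^{\perp}\bu-\textsf{P}^0\bu\in\V$.
\begin{itemize}
\item Since $\textsf{P}^0\bu$ is a gradient, $\curl\bv=\curl\bu$.
\item Since $\textsf{P}^{\perp}$ annihilates gradients, $\textsf{P}^{\perp}\bv=\textsf{P}^{\perp}\bu$.
\item By orthogonality of the decomposition, $\|\bv\|=\|\bu\|$, hence $\norme{\bv}=\norme{\bu}$.
\end{itemize}
Expanding and using orthogonality,
\[
\hat{\a}(\bu,\bv)=\|\curl\bu\|^2-\ka^2\|\textsf{P}^{\perp}\bu\|^2+\ka^2\|\textsf{P}^0\bu\|^2+2\ka^2\|\textsf{P}^{\perp}\bu\|^2=\norme{\bu}^2 .
\]
This yields \eqref{eq:inf-sup} with constant one.

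\emph{Discrete inf--sup.} Mimic the continuous argument with the discrete decomposition. For $\bu_h\in\V_h$ set $\bv_h:=\textsf{P}^{\perp}_h\bu_h-\textsf{P}^0_h\bu_h=\bu_h-2\textsf{P}^0_h\bu_h\in\V_h$.
\begin{itemize}
\item Since $\textsf{P}^0_h\bu_h\in\nabla U_h^0\subset\nabla H_0^1(\Om)$, we have $\curl\bv_h=\curl\bu_h$ and $\textsf{P}^{\perp}\bv_h=\textsf{P}^{\perp}\bu_h$.
\item By \eqref{HD1div} the discrete splitting is $\bL^2$-orthogonal, so $\norme{\bv_h}=\norme{\bu_h}$.
\end{itemize}
The same expansion gives
\[
\hat{\a}(\bu_h,\bv_h)=\|\curl\bu_h\|^2+\ka^2\|\textsf{P}^0_h\bu_h\|^2+\ka^2\big(2a^2-b^2\big),
\]
where $a:=\|\textsf{P}^{\perp}\bu_h\|$ and $b:=\|\textsf{P}^{\perp}_h\bu_h\|$.

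\emph{Main step: controlling $2a^2-b^2$.} The difficulty is that this quantity need not be nonnegative, since continuous and discrete divergence-free parts differ. Here I use \eqref{HD1w}, which gives $|a-b|\le\delta$ with $\delta:=Ch\|\curl\bu_h\|$. If $b\le\delta$, then $2a^2-b^2\ge b^2/2-\delta^2$ trivially. Otherwise $a\ge b-\delta\ge0$, and an elementary inequality gives
\[
2a^2-b^2\ge 2(b-\delta)^2-b^2\ge b^2-4b\delta\ge \tfrac12 b^2-8\delta^2 .
\]
In either case,
\[
\hat{\a}(\bu_h,\bv_h)\ge\big(1-8C^2\ka^2h^2\big)\|\curl\bu_h\|^2+\ka^2\|\textsf{P}^0_h\bu_h\|^2+\tfrac12\ka^2\|\textsf{P}^{\perp}_h\bu_h\|^2 .
\]
Choosing $\tilde C_0$ so that $8C^2\tilde C_0^2\le\frac12$, the right-hand side is $\gtrsim\norme{\bu_h}^2=\norme{\bu_h}\,\norme{\bv_h}$, which is \eqref{eq:discrete-inf-sup}.
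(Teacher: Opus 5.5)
Your proof is the paper's proof: the same sign-flipped test functions $\textsf{P}^{\perp}\bu-\textsf{P}^{0}\bu$ and $\textsf{P}^{\perp}_h\bu_h-\textsf{P}^{0}_h\bu_h$, the same identity, and \eqref{HD1w} to control the perturbation. The paper bounds that perturbation a bit more directly via $2\ka^2|a^2-b^2|\le 2\ka^2|a-b|(a+b)\lesssim \ka^2h\|\curl\bu_h\|\,\|\bu_h\|\lesssim \ka h\norme{\bu_h}^2$ (using $a,b\le\|\bu_h\|$), which gives a factor $1-C\ka h$ instead of your $1-8C^2\ka^2h^2$.

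One small slip: in the case $b\le\delta$ your claimed bound $2a^2-b^2\ge \tfrac12 b^2-\delta^2$ is false (take $a=0$, $b=\delta$). However, $2a^2-b^2\ge -b^2\ge \tfrac12 b^2-\tfrac32\delta^2$ does hold, and that is all your final ``in either case'' estimate needs.
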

	
	\begin{proof}
		Estimates \eqref{eq:bCont}--\eqref{eq:b2Coer} follow immediately
		from the definition of the energy norm and the Cauchy--Schwarz
		inequality. Next  {we} prove \eqref{eq:inf-sup}.   Following the proof of \cite[(5.8)]{Hiptmair2002femcem}, for any $\bu\in\V$, let $ \bu_1=\textsf{P}^{0}\bu, 
		\bu_2=\textsf{P}^{\perp}\bu$, 
		so that $\bu=\bu_1+\bu_2$, $\curl\bu_1=\bm{0}$ and
		$(\bu_1,\bu_2)=0$. Taking $\bv=\bu_2-\bu_1$, we obtain
		\begin{align*}
			\hat{\a}(\bu_1+\bu_2,\bu_2-\bu_1)
			= \|\curl\bu_2\|^{2}
			+\ka^{2}\|\bu_2\|^{2}
			+\ka^{2}\|\bu_1\|^{2} =
			\norme{\bu}^{2}.
		\end{align*}
		Moreover, $\norme{\bu_2-\bu_1}=\norme{\bu}$, and hence
		\eqref{eq:inf-sup} follows.
		
		We next prove the discrete  {inf--sup} condition. For any $\bu_h\in\V_h$, write
		its discrete Helmholtz decomposition as
		\[
		\bu_h=\bu_{1h}+\bu_{2h},
		\qquad
		\bu_{1h}:=\textsf{P}^{0}_h\bu_h,
		\qquad
		\bu_{2h}:=\textsf{P}^{\perp}_h\bu_h.
		\]
		Let also $ 	\bu_2:=\textsf{P}^{\perp}\bu_h$. 
		By \eqref{HD1div} and \eqref{HD1w},
		\begin{equation}
			(\bu_{1h},\bu_{2h})=0,
			\qquad
			\|\bu_2-\bu_{2h}\|
			\ls h\|\curl\bu_h\|.
			\label{eq:discrete-HD-error}
		\end{equation}
		Choose $
		\bv_h:=\bu_{2h}-\bu_{1h}\in\V_h$. 
		Since $\textsf{P}^{\perp}\bu_{1h}=\bm{0}$, we have
		\[
		\textsf{P}^{\perp}\bv_h
		=\textsf{P}^{\perp}\bu_{2h}
		=\textsf{P}^{\perp}\bu_h
		=\bu_2.
		\]
		Consequently,
		\begin{align*}
			\hat{\a}(\bu_h,\bv_h)
			&=\|\curl\bu_h\|^{2}
			+\ka^{2}\|\bu_{1h}\|^{2}
			-\ka^{2}\|\bu_{2h}\|^{2}
			+2\ka^{2}\|\bu_2\|^{2} \\
			&=\norme{\bu_h}^{2}
			+2\ka^{2}
			\big(\|\bu_2\|^{2}-\|\bu_{2h}\|^{2}\big).
		\end{align*}
		Since $\textsf{P}^{\perp}$ is an $\bL^2$-orthogonal projection and
		$(\bu_{1h},\bu_{2h})=0$, it follows that
		\[
		\|\bu_2\|\le\|\bu_h\|,
		\qquad
		\|\bu_{2h}\|\le\|\bu_h\|.
		\]
		Therefore, using \eqref{eq:discrete-HD-error},
		\begin{align*}
			\hat{\a}(\bu_h,\bv_h)
			&\ge \norme{\bu_h}^{2}
			-2\ka^{2}\|\bu_2-\bu_{2h}\|
			\big(\|\bu_2\|+\|\bu_{2h}\|\big) \\
			&\ge \norme{\bu_h}^{2}
			-C\ka^{2}h\|\curl\bu_h\|\,\|\bu_h\| \\
			&\ge \big(1-C\ka h\big)\norme{\bu_h}^{2}.
		\end{align*}
		On the other hand, the discrete orthogonality gives
		\begin{align*}
			\norme{\bv_h}^{2}
			&=\|\curl\bu_{2h}\|^{2}
			+\ka^{2}\|\bu_{2h}-\bu_{1h}\|^{2} \\
			&=\|\curl\bu_h\|^{2}
			+\ka^{2}\big(
			\|\bu_{1h}\|^{2}+\|\bu_{2h}\|^{2}
			\big)
			=\norme{\bu_h}^{2}.
		\end{align*}
		Choosing $\tilde C_0>0$ sufficiently small such that $C\tilde C_0\le\frac12$,
		we conclude that
		\[
		\frac{|\hat{\a}(\bu_h,\bv_h)|}{\norme{\bv_h}}
		\ge \frac12\norme{\bu_h},
		\]
		which proves \eqref{eq:discrete-inf-sup}.
	\end{proof}

	Using Lemma~\ref{lem:bcoev}, we infer that the solutions of \eqref{eq:bVP2} and \eqref{eq:bVP} enjoy the same stability and regularity properties.
	
	\begin{lemma}\label{lem:auxProbStab}
		For any $\fp \in \bL^2(\Om)$,  problems \eqref{eq:bVP2}  and \eqref{eq:bVP}  both have a unique solution $\w \in \V$ satisfying the stability estimate
		\begin{equation}
			\norme{\w}  \ls  \ka^{-1}\|\fp\|.
			\label{eq:auxProbstab}
		\end{equation}
		Moreover, if $\fp \in \bH(\dive^0;\Om)$ and $\Om$ is convex, then $\w$ satisfies the regularity estimates
		\begin{equation}
			\ka \|\w\|_{1} + \|\curl \w\|_{1}  \ls  \|\fp\|,
			\label{eq:auxProbReg}
		\end{equation}
		and if, in addition, $\Om$ is a cuboid, then 
		\begin{equation}
			\|\w\|_{2}  \ls  \|\fp\|.
			\label{eq:auxProbReg2}
		\end{equation} 	 
	\end{lemma}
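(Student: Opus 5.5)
Existence, uniqueness and the stability bound \eqref{eq:auxProbstab} come from Lemma~\ref{lem:bcoev} via Lax--Milgram and Babu\v{s}ka--Ne\v{c}as. For \eqref{eq:bVP2}, the coercivity \eqref{eq:b2Coer} and continuity \eqref{eq:bCont} give a unique $\w$. Testing with $\bv=\w$ yields $\norme{\w}^2\le \|\fp\|\|\w\|\le \ka^{-1}\|\fp\|\norme{\w}$. For \eqref{eq:bVP}, the continuous inf--sup condition \eqref{eq:inf-sup} holds. Since $\hat\a$ is Hermitian ($\textsf{P}^\perp$ is self-adjoint), the adjoint inf--sup condition is the same statement, so the Babu\v{s}ka--Ne\v{c}as theorem gives well-posedness. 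The inf--sup bound then gives $\norme{\w}\ls \sup_{\bv}|(\fp,\bv)|/\norme{\bv}\le \ka^{-1}\|\fp\|$.

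For the regularity, assume $\dive\fp=0$. First I show that $\w$ is divergence free. Test either equation with $\bv=\nabla q$, $q\in H_0^1(\Om)$. Then $\curl\bv=0$, $\textsf{P}^\perp\nabla q=0$ and $(\fp,\nabla q)=0$, so in both cases $\ka^2(\w,\nabla q)=0$. (For $\tilde\a$ this is direct. For $\hat\a$ the term is $-\ka^2(\w,\nabla q)+2\ka^2(\textsf{P}^\perp\w,\textsf{P}^\perp\nabla q)=-\ka^2(\w,\nabla q)$.) Hence $\w\in\bX$, and $\textsf{P}^\perp\w=\w$. Problem \eqref{eq:auxProb} then reduces to $\curl\curl\w+\ka^2\w=\fp$, so both problems have the same solution and it suffices to treat \eqref{eq:auxProb2}.

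Lemma~\ref{lem:shift} and the stability bound give $\ka\|\w\|_1\ls \ka\|\curl\w\|\ls\|\fp\|$. For the curl, set $\bz:=\curl\w$. In the distributional sense $\curl\bz=\fp-\ka^2\w\in\bL^2$ with $\|\curl\bz\|\le\|\fp\|+\ka^2\|\w\|\ls\|\fp\|$, using $\ka\|\w\|\le\norme{\w}\ls\ka^{-1}\|\fp\|$. Also $\dive\bz=0$. Since $\w\times\n=0$, we get $\bz\cdot\n=\mathrm{div}_\Ga(\w\times\n)=0$ on $\Ga$. So $\bz\in\bH(\curl)\cap\bH_0(\dive^0)$, and the second case of Lemma~\ref{lem:shift} gives $\|\curl\w\|_1\ls\|\curl\bz\|+\|\bz\|\ls\|\fp\|$. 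The lower-order term $\|\bz\|\le\norme{\w}\ls\ka^{-1}\|\fp\|$ is harmless because $\ka\gtrsim1$. This proves \eqref{eq:auxProbReg}.

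The $\bH^2$ estimate \eqref{eq:auxProbReg2} on a cuboid is the main obstacle. Each component of $\w$ satisfies $-\Delta\w=\curl\curl\w=\fp-\ka^2\w$, using $\dive\w=0$. The boundary conditions are tangential components zero and $\dive\w=0$, which on a face with normal $e_i$ becomes $\partial_i w_i=0$. So each component $w_i$ solves a scalar Poisson problem on the box with Dirichlet data on the faces orthogonal to $e_j$, $j\ne i$, and Neumann data on the faces orthogonal to $e_i$. On a cuboid this mixed problem has full $H^2$ regularity: reflection across the faces, or an eigenfunction expansion in products of sines and cosines, gives $\|w_i\|_2\ls\|\Delta w_i\|+\|w_i\|$. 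Therefore $\|\w\|_2\ls\|\fp\|+\ka^2\|\w\|\ls\|\fp\|$. The delicate points are justifying the Neumann condition $\partial_iw_i=0$ weakly, which needs $\dive\w=0$ and the trace argument, and checking compatibility at edges where Dirichlet and Neumann faces meet. The right-angle geometry makes the reflections consistent there; I would verify this with the cosine/sine series, where $\|w_i\|_2\simeq\|\Delta w_i\|$ is explicit.
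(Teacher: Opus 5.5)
Your proof is correct. Well-posedness, \eqref{eq:auxProbstab}, and \eqref{eq:auxProbReg} (via Lemma~\ref{lem:shift} applied to $\w$ and to $\curl\w$) follow the paper's argument. You add two points the paper leaves implicit. First, the test with $\nabla q$ actually shows $\w\in\bX$. Second, for divergence-free data $\hat{\a}(\w,\cdot)=\tilde{\a}(\w,\cdot)$, so the two auxiliary problems have the same solution; the paper uses this tacitly later when it writes $\bm{\theta}=\textsf{P}^{\perp}\bm{\theta}$.

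The genuine difference is the $\bH^2$ bound. The paper rewrites both problems as the static system $\curl\curl\w=\bm g$, $\dive\w=0$, $\w\times\n=\bm 0$, and cites Costabel--Dauge for $\|\w\|_2\le C_\Om\|\bm g\|$ on a cuboid. You instead decouple into three scalar Poisson problems with mixed Dirichlet/Neumann conditions and use the sine--cosine eigenbasis of the box. Your route is self-contained and shows exactly where the cuboid (not merely convex) assumption enters. The citation is shorter and places the result inside a general singularity theory.

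The step you flag as delicate closes directly once you use $\w\in\bH^1$ from the previous step. Take $\phi$ smooth and vanishing on the faces not orthogonal to $\mathbf{e}_i$; then $\bv=\phi\,\mathbf{e}_i$ belongs to $\V$. In $(\curl\w,\curl\bv)$, each cross term $-(\partial_i w_j,\partial_j\phi)$ with $j\neq i$ integrates by parts twice to $-(\partial_j w_j,\partial_i\phi)$. The boundary terms vanish because $w_j=0$ on the faces orthogonal to $\mathbf{e}_i$, and $\partial_i\phi=0$ on the faces orthogonal to $\mathbf{e}_j$. Then $\dive\w=0$ gives $(\nabla w_i,\nabla\phi)=((\fp-\ka^2\w)_i,\phi)$, which is precisely the weak mixed problem. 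No edge-compatibility issue arises in the eigenfunction expansion, since Parseval gives $\|\partial_j\partial_k w_i\|\le\|\Delta w_i\|$ directly.
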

	
	\begin{proof}
		The existence and uniqueness of a solution to the problem \eqref{eq:bVP2}  (resp.  \eqref{eq:bVP}) follow from Lemma~\ref{lem:bcoev} and the  Lax--Milgram lemma (resp. generalized Lax--Milgram lemma), see e.g. \cite{monk2003}.
		
		We next prove the stability estimate \eqref{eq:auxProbstab}. By  \eqref{eq:inf-sup}, for   problem \eqref{eq:bVP}, we  have
		\begin{align*}
			\norme{\w}
			& \ls  \sup_{\bm{0}\ne \bv \in \V} \frac{|\hat{\a}(\w,\bv)|}{\norme{\bv}}
			\ls  \sup_{\bm{0}\ne \bv \in \V} \frac{|(\fp,\bv)|}{\ka \|\bv\|}
			\ls  \ka^{-1}\|\fp\|. 
		\end{align*}
		In view of  \eqref{eq:b2Coer}, the same estimate follows for problem \eqref{eq:bVP2}. This proves \eqref{eq:auxProbstab}.
		
		Assume now that $\dive \fp = 0$. Then $\w \in \bX $, and hence $\curl \w \in \bH(\curl)\cap \bH_0(\dive^0;\Om)$. By Lemma~\ref{lem:shift} and \eqref{eq:auxProbstab}, we obtain
		\begin{align*}
			\|\w\|_{1} & \ls  \|\curl \w\|  \ls  \ka^{-1}\|\fp\|, \\
			\|\curl \w\|_{1} & \ls  \|\curl\curl \w\|  \ls  \ka^2 \|\w\| + \|\fp\|  \ls  \|\fp\|.
		\end{align*}
		
		It remains to establish the $\bH^2$ estimate. Both \eqref{eq:bVP2} and \eqref{eq:bVP} can be rewritten in the form
		\begin{equation}
			\left\{
			\begin{array}{rll}
				\curl\curl \w &= \bm{g}, & \quad \text{in } \Om, \\
				\dive \w &= 0, & \quad \text{in } \Om, \\
				\w \times \n &= \bm{0}, & \quad \text{on } \partial\Om,
			\end{array}
			\right.
			\label{eq:static_Maxwell}
		\end{equation}
		where $\bm{g} := \fp + \ka^2 \w$ for \eqref{eq:bVP2}, whereas  $\bm{g} := \fp + \ka^2 \w - 2\ka^2 \textsf{P}^{\perp}\w$ for \eqref{eq:bVP}. In either case, $\dive \bm{g} = 0$.
		
		If $\Om$ is a cuboid, it follows from \cite[Section 4.4.2]{costabel2000Singular} that the solution of \eqref{eq:static_Maxwell} satisfies
		\begin{equation}
			\|\w\|_{2} \le C_{\Om} \|\bm{g}\|.
			\label{eq:H2_reg_static}
		\end{equation}
		Furthermore, by the triangle inequality, the $\bL^2$-stability of $\textsf{P}^{\perp}$, and \eqref{eq:auxProbstab}, we have
		\[
		\|\bm{g}\|  \ls  \|\fp\| + \ka^2 \|\w\|  \ls  \|\fp\|.
		\]
		Substituting this bound into \eqref{eq:H2_reg_static}, we obtain
		\[
		\|\w\|_{2}  \ls  \|\fp\|.
		\]
		This completes the proof.
	\end{proof}
	
	We have the following regularity result for problem \eqref{eq:dvp}, which can be established by combining \eqref{eq:solstab} with an argument analogous to that used in Lemma~\ref{lem:auxProbStab}.
	
	\begin{lemma}\label{lem:dvpReg}
		If $\Om$ is convex and $\dive \f = 0$, then the solution $\E$ to problem \eqref{eq:dvp} satisfies
		\[
		\|\E\|_1 + \ka^{-1} \|\E\|_{\bH^1(\curl)}  \ls   \Csol \|\f\|.
		\]  If in addition $\Om$ is a cuboid, then $\ka^{-1} \|\E\|_2  \ls   \Csol \|\f\|$.
	\end{lemma}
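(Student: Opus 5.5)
Since $\dive\f=0$, testing \eqref{eq:dvp} with $\bv=\nabla q$, $q\in H_0^1(\Om)$, gives $-\ka^2(\E,\nabla q)=(\f,\nabla q)=0$. Hence $\dive\E=0$ and $\E\in\bX$. The plan is to combine this with the stability estimate \eqref{eq:solstab} and the embedding Lemma~\ref{lem:shift}, following the argument of Lemma~\ref{lem:auxProbStab}. Recall that \eqref{eq:solstab} reads $\norme{\E}\ls\Csol\|\f\|$; in particular $\|\curl\E\|\ls\Csol\|\f\|$ and $\ka\|\E\|\ls\Csol\|\f\|$.

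\emph{First-order bounds.} Since $\E\in\bX$ and $\Om$ is convex, \eqref{eq:shift1} gives
\[
\|\E\|_1\ls\|\curl\E\|\ls\Csol\|\f\|,
\]
so $\ka^{-1}\|\E\|_1\le\|\E\|_1$ is also controlled (using $\ka\gtrsim1$). For the curl, $\curl\E\in\bH(\curl)$ with $\dive\curl\E=0$ and $\curl\E\cdot\n=0$ on $\Ga$, the latter because $\E\times\n=\bm 0$. Thus $\curl\E\in\bH(\curl)\cap\bH_0(\dive^0;\Om)$, and Lemma~\ref{lem:shift} with the equation yields
\[
\|\curl\E\|_1\ls\|\curl\curl\E\|=\|\f+\ka^2\E\|\le\|\f\|+\ka^2\|\E\|\ls\|\f\|+\ka\Csol\|\f\|.
\]
Multiplying by $\ka^{-1}$ gives $\ka^{-1}\|\curl\E\|_1\ls(\ka^{-1}+\Csol)\|\f\|$. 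The term $\ka^{-1}\|\f\|$ is absorbed because $\Csol=\ka^{-1}\Mk\ge\ka^{-1}$, since $\Mk\ge1$. Adding the pieces gives the first claim.

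\emph{$\bH^2$ bound on a cuboid.} Rewrite the problem as the static system \eqref{eq:static_Maxwell} with $\bm g:=\f+\ka^2\E$, which is divergence free. The estimate \eqref{eq:H2_reg_static} from \cite{costabel2000Singular} then gives
\[
\|\E\|_2\ls\|\bm g\|\ls\|\f\|+\ka^2\|\E\|\ls(1+\ka\Csol)\|\f\|.
\]
Dividing by $\ka$ and using $\ka^{-1}\le\Csol$ again yields $\ka^{-1}\|\E\|_2\ls\Csol\|\f\|$.

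There is no real obstacle. The only points requiring care are verifying $\dive\E=0$ and the normal-trace condition for $\curl\E$, which make Lemma~\ref{lem:shift} applicable, and checking that $\Csol\gtrsim\ka^{-1}$ so the lower-order terms are absorbed.
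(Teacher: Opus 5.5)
Your proposal is correct and is the argument the paper intends. The paper gives no proof, only saying the lemma follows by combining \eqref{eq:solstab} with the reasoning of Lemma~\ref{lem:auxProbStab}. That is what you do: deduce $\E\in\bX$ from $\dive\f=0$, apply Lemma~\ref{lem:shift} to $\E$ and to $\curl\E$, and use the cuboid estimate \eqref{eq:H2_reg_static} with $\bm g=\f+\ka^2\E$. Your check that $\Csol=\ka^{-1}\Mk\ge\ka^{-1}$ (because $\Mk\ge1$), which absorbs the leftover $\ka^{-1}\|\f\|$ terms, is a detail the paper leaves implicit.
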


	\subsection{Elliptic projections}
	
	For any $\bu \in \V$, we introduce two elliptic projections $\Ph \bu, \tPh \bu \in \V_h$ defined by
	\begin{equation}
		\hat{\a} \left(\bu- \Ph \bu, \bv_{h}\right)=0 \;    \quad \forall \bv_{h} \in \V_h,
		\label{eq:EP-orth}
	\end{equation}
	and
	\begin{equation}
		\tilde{\a} \big(\bu- \tPh \bu, \bv_{h}\big)=0 \;    \quad \forall \bv_{h} \in \V_h.
		\label{eq:EP2-orth}
	\end{equation}
	
	As a consequence of Lemma~\ref{lem:bcoev}, we have the following C\'ea lemma for the two elliptic projections. Its proof is standard and is therefore omitted.

	\begin{lemma}[Quasi-optimality of the elliptic projections]
		\label{lem:Pherror}
		There exists a constant $\tilde C_0>0$, independent of $\ka$ and $h$,
		such that, if $\ka h\le \tilde C_0$, then the projection $\Ph$ is
		well-defined and
		\begin{equation}
			\energy{\bu-\Ph\bu}
			\ls
			\inf_{\bv_h\in\V_h}\energy{\bu-\bv_h}
			\qquad \forall\,\bu\in\V.
			\label{eq:PhEnerEst}
		\end{equation}
		Moreover, the projection $\tPh$ is well-defined for arbitrary $h>0$ and 
		\begin{equation}
			\energy{\bu-\tPh\bu}
			\ls
			\inf_{\bv_h\in\V_h}\energy{\bu-\bv_h}
			\qquad \forall\,\bu\in\V.
			\label{eq:Ph2EnerEst}
		\end{equation}
	\end{lemma}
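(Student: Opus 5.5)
The plan is the standard Céa argument, using the stability properties collected in Lemma~\ref{lem:bcoev}; the only point needing care is the $h$-restriction for $\Ph$.

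\textbf{The projection $\tPh$.} Fix $\bu\in\V$. The form $\tilde{\a}$ is bounded on $\V_h\times\V_h$ by \eqref{eq:bCont}. It is also coercive there, with constant one, by \eqref{eq:b2Coer}. Since $\V_h$ is finite dimensional, the Lax--Milgram lemma gives a unique $\tPh\bu\in\V_h$ satisfying \eqref{eq:EP2-orth}, for every $h>0$. For arbitrary $\bv_h\in\V_h$, set $\bw_h:=\bv_h-\tPh\bu\in\V_h$. Combining \eqref{eq:b2Coer}, the Galerkin orthogonality \eqref{eq:EP2-orth} and \eqref{eq:bCont} gives
\[
\norme{\bw_h}^2=\tilde{\a}(\bw_h,\bw_h)=\tilde{\a}(\bv_h-\bu,\bw_h)\ls \norme{\bu-\bv_h}\,\norme{\bw_h}.
\]
Hence $\norme{\bw_h}\ls\norme{\bu-\bv_h}$. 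The triangle inequality $\norme{\bu-\tPh\bu}\le\norme{\bu-\bv_h}+\norme{\bw_h}$ then yields \eqref{eq:Ph2EnerEst} after taking the infimum over $\bv_h$.

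\textbf{The projection $\Ph$.} Here coercivity is replaced by the discrete inf--sup condition \eqref{eq:discrete-inf-sup}, which holds once $\ka h\le\tilde C_0$ with $\tilde C_0$ as in Lemma~\ref{lem:bcoev}. This is the only place where the mesh restriction enters.

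First, $\Ph$ is well defined. The linear map $\V_h\to\V_h^*$, $\bu_h\mapsto\hat{\a}(\bu_h,\cdot)$, is injective by \eqref{eq:discrete-inf-sup}. Both spaces have the same finite dimension, so the map is bijective. Therefore \eqref{eq:EP-orth} has a unique solution $\Ph\bu$.

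Second, the error estimate follows the same pattern as before. For $\bv_h\in\V_h$, put $\bw_h:=\bv_h-\Ph\bu$. By \eqref{eq:discrete-inf-sup}, then \eqref{eq:EP-orth}, then \eqref{eq:bCont},
\[
\norme{\bw_h}\ls\sup_{\bm{0}\ne\bz_h\in\V_h}\frac{|\hat{\a}(\bw_h,\bz_h)|}{\norme{\bz_h}}
=\sup_{\bm{0}\ne\bz_h\in\V_h}\frac{|\hat{\a}(\bv_h-\bu,\bz_h)|}{\norme{\bz_h}}\ls\norme{\bu-\bv_h}.
\]
The triangle inequality and taking the infimum over $\bv_h$ give \eqref{eq:PhEnerEst}.

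All hidden constants come from \eqref{eq:bCont}, \eqref{eq:b2Coer} and \eqref{eq:discrete-inf-sup}, so they are independent of $\ka$ and $h$.
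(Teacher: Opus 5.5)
Your proposal is correct and is exactly the standard C\'ea argument the paper has in mind when it says the lemma follows from Lemma~\ref{lem:bcoev}: coercivity \eqref{eq:b2Coer} for $\tPh$, the discrete inf--sup condition \eqref{eq:discrete-inf-sup} plus a dimension count for $\Ph$, and continuity \eqref{eq:bCont} for both. As a side remark, $\tilde{\a}$ is precisely the energy inner product, so $\tPh$ is the energy-orthogonal projection and \eqref{eq:Ph2EnerEst} in fact holds with constant one.
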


	We now establish superconvergence estimates in the $\bL^2$ norm for the divergence-free components of both the elliptic projection error and the EEM error.
	
	\begin{lemma}\label{lem:SpherrEst}
		For any $\bu \in \V$, the following estimate holds:
		\begin{align}
			\| \textsf{P}^{\perp}  (\bu -\tPh \bu  ) \| &\ls    h  \inf_{\bv_{h} \in \V_h} \norme{ \bu -\bv_{h}}.  \label{eq:pprSvphv2}  
		\end{align}
		There exists a constant $\tilde C_0>0$ such that, if $\ka h\leq\tilde C_0$, then
		\begin{align}
			\| \textsf{P}^{\perp}  (\bu -\Ph \bu  ) \| &\ls    h  \inf_{\bv_{h} \in \V_h} \norme{ \bu -\bv_{h}}.  \label{eq:pprSvphv1}  
		\end{align}
		
	\end{lemma}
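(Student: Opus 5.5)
The plan is an Aubin--Nitsche duality argument in which the dual right-hand side is divergence-free, so that the regularity estimate \eqref{eq:auxProbReg} of Lemma~\ref{lem:auxProbStab} is available. Set $\be:=\bu-\tPh\bu$ (respectively $\bu-\Ph\bu$) and $\fp:=\textsf{P}^{\perp}\be\in\bH(\dive^0;\Om)$. Let $\w\in\V$ solve the dual problem \eqref{eq:bVP2} (respectively \eqref{eq:bVP}) with data $\fp$, written with $\w$ in the first slot. Both forms are Hermitian, since $\textsf{P}^{\perp}$ is an orthogonal projection, so $\tilde{\a}(\bv,\w)=(\bv,\fp)$ for all $\bv\in\V$. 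Then
\[
\|\textsf{P}^{\perp}\be\|^2=(\be,\textsf{P}^{\perp}\be)=(\be,\fp)=\tilde{\a}(\be,\w)=\tilde{\a}(\be,\w-\w_h)\quad\forall\,\w_h\in\V_h,
\]
by \eqref{eq:EP2-orth}. The same identity holds with $\hat{\a}$ and \eqref{eq:EP-orth}.

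Next I bound $\tilde{\a}(\be,\w-\w_h)$. By \eqref{eq:bCont} it is at most $\norme{\be}\,\norme{\w-\w_h}$. Choose $\w_h:=\pih\w$, which is allowed since $\w\in\V\cap\bH^1(\curl)$ by \eqref{eq:auxProbReg}. Then \eqref{eq:interpEn} together with \eqref{eq:auxProbReg} gives
\[
\norme{\w-\pih\w}\ls h\big(|\curl\w|_1+\ka|\w|_1\big)\ls h\|\fp\|.
\]
This needs $\ka h\ls1$. For $\tPh$ I would avoid that assumption: bound $\|\w-\pih\w\|\ls h|\w|_1+h^2|\curl\w|_1$ directly by \eqref{eq:interpL2}, and then use $\ka h^2\ls h$ only when $\ka h\ls 1$. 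Alternatively, one can note that when $\ka h\gtrsim1$ the estimate is trivial, because $\|\textsf{P}^{\perp}\be\|\le\|\be\|\le\ka^{-1}\norme{\be}\ls h\norme{\be}$.

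Finally, Lemma~\ref{lem:Pherror} gives $\norme{\be}\ls\inf_{\bv_h}\norme{\bu-\bv_h}$. Dividing by $\|\fp\|$ then yields \eqref{eq:pprSvphv2}, and \eqref{eq:pprSvphv1} follows under $\ka h\le\tilde C_0$, which makes $\Ph$ well-defined.

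The main point to check is that the divergence-free data $\fp$ genuinely gives the full regularity \eqref{eq:auxProbReg}. This requires $\Om$ to be convex, which holds here since $\Om$ is a box. Taking $\textsf{P}^{\perp}$ of the error is exactly what makes this work: it removes the gradient part, for which no $h$-gain is possible.
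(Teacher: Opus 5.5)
Your proposal is correct and is essentially the paper's argument: a duality argument with the divergence-free datum $\textsf{P}^{\perp}(\bu-\tPh\bu)$ (resp.\ $\textsf{P}^{\perp}(\bu-\Ph\bu)$), Galerkin orthogonality via the Hermitian symmetry of $\tilde{\a}$ and $\hat{\a}$ (the paper uses this symmetry tacitly when it subtracts $\pih\w$ in the first slot), the regularity \eqref{eq:auxProbReg}, and the C\'ea estimates of Lemma~\ref{lem:Pherror}. Your case split for $\tPh$ is a worthwhile addition: for $\ka h>1$ the trivial bound $\|\textsf{P}^{\perp}(\bu-\tPh\bu)\|\le\ka^{-1}\norme{\bu-\tPh\bu}\le h\norme{\bu-\tPh\bu}$ applies, which matters because the paper's ``analogous'' proof of \eqref{eq:pprSvphv2} silently uses $\ka h\ls 1$ although that estimate is stated without a mesh condition.
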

	
	\begin{proof}
		We only prove \eqref{eq:pprSvphv1} since the proof of \eqref{eq:pprSvphv2} is analogous.
		
		By Lemma~\ref{lem:auxProbStab},  there exists $\w \in \bH^1(\curl;\Om)$ such that
		\eq{ \label{eq:pDual1}
			\hat{\a}(\w, \bph )=\big(\textsf{P}^{\perp} \left(\bu -\Ph \bu \right), \bph \big) \quad \forall \bph \in \V,
		}
		and
		\[
		\ka \|\w\|_1+ \|\w\|_{\bH^1(\curl)} \ls    \|\textsf{P}^{\perp}  (\bu -\Ph \bu  ) \|.
		\]
		Taking $\bph = \bu -\Ph \bu$ in \eqref{eq:pDual1}, and using $\ka   h  \ls  1$, we obtain
		\eqn{
			\|\textsf{P}^{\perp} \big(\bu -\Ph \bu \big) \|^{2} 
			& =\big(\textsf{P}^{\perp} \left(\bu -\Ph \bu \right),\bu -\Ph \bu  \big) \\
			& = \hat{\a}\left(\w,\bu -\Ph \bu  \right)=\hat{\a} \left(\w-\pi_h \w,\bu  -\Ph \bu  \right) \\
			& \ls  \norme{\w-\pi_h \w}   \norme{ \bu -\Ph \bu  }  \\
			& \ls   h  \big(\ka  \|\w\|_1+ \|\w\|_{ \bH^1(\curl)}\big)  \norme{ \bu -\Ph \bu  } \\
			& \ls      h   \|\textsf{P}^{\perp} \left(\bu -\Ph \bu  \right) \|  \norme{ \bu -\Ph \bu }.
		}
		Combining this estimate with Lemma~\ref{lem:Pherror} yields \eqref{eq:pprSvphv1}.
		The proof is complete. 
		
	\end{proof}

	\subsection{Preasymptotic error estimates for EEM }

	Similar to Lemma~\ref{lem:SpherrEst}, we have the following estimate for the divergence-free part of $\E -\Eh$.
	
	\begin{lemma}\label{lem:SerrEst}
		Let $\E$ be the solution of \eqref{eq:eq}--\eqref{eq:bc}, and let $\Eh$ be the solution of the EEM scheme \eqref{eq:EEM}. Then there exists a constant $C_0 >0$ such that, if ${\ka}^3  h^2 \Csol  \leq  C_0$, then
		\begin{align}
			\| \textsf{P}^{\perp} \left(\E -\Eh \right) \| &\ls   \ka h   \Csol   \inf_{\bv_{h} \in \V_h} \norme{ \E -\bv_h},  \label{eq:Serr1}  
		\end{align}
	\end{lemma}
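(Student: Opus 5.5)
We follow the duality argument of Lemma~\ref{lem:SpherrEst}, but the dual problem is now the original indefinite Maxwell problem, so the stability constant $\Csol$ enters. Write $\boldsymbol{e}:=\E-\Eh$ and let $\w\in\V$ solve the adjoint problem
\[
\a(\bph,\w)=\big(\bph,\textsf{P}^{\perp}\boldsymbol{e}\big)\qquad\forall\bph\in\V .
\]
Since $\a$ is Hermitian up to conjugation, this is again \eqref{eq:dvp} with right-hand side $\textsf{P}^{\perp}\boldsymbol{e}$. Because $\dive \textsf{P}^{\perp}\boldsymbol{e}=0$, Lemma~\ref{lem:dvpReg} gives
\[
\ka\|\w\|_1+\|\w\|_{\bH^1(\curl)}\ls \ka\Csol\|\textsf{P}^{\perp}\boldsymbol{e}\| .
\]
Now test with $\bph=\boldsymbol{e}$ and use Galerkin orthogonality \eqref{eq:orthogonality}:
\[
\|\textsf{P}^{\perp}\boldsymbol{e}\|^2=\a(\boldsymbol{e},\w-\pih\w)\ls \norme{\boldsymbol{e}}\,\norme{\w-\pih\w}\ls \norme{\boldsymbol{e}}\,\ka h\Csol\|\textsf{P}^{\perp}\boldsymbol{e}\|,
\]
where the last step uses \eqref{eq:interpEn}. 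Hence
\[
\|\textsf{P}^{\perp}\boldsymbol{e}\|\ls \ka h\Csol\norme{\boldsymbol{e}} .
\]

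It remains to show the quasi-optimality bound $\norme{\boldsymbol{e}}\ls\inf_{\bv_h}\norme{\E-\bv_h}$ under $\ka^3h^2\Csol\le C_0$. This is the main obstacle, since the naive G\aa rding/Schatz argument only closes under $\ka^2 h\Csol$ small. Our plan is to use the elliptic projection $\Ph$ from \eqref{eq:EP-orth}, whose form $\hat{\a}$ differs from $\a$ only by the divergence-free $\bL^2$ term. Split $\boldsymbol{e}=(\E-\Ph\E)+\boldsymbol{\eta}_h$ with $\boldsymbol{\eta}_h:=\Ph\E-\Eh\in\V_h$. By Lemma~\ref{lem:Pherror}, $\norme{\E-\Ph\E}\ls\inf_{\bv_h}\norme{\E-\bv_h}$.

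For the discrete part, $\hat{\a}(\bu,\bv)=\a(\bu,\bv)+2\ka^2(\textsf{P}^{\perp}\bu,\textsf{P}^{\perp}\bv)$, so for every $\bv_h\in\V_h$
\[
\hat{\a}(\boldsymbol{\eta}_h,\bv_h)=\hat{\a}(\E-\Eh,\bv_h)=2\ka^2(\textsf{P}^{\perp}\boldsymbol{e},\textsf{P}^{\perp}\bv_h).
\]
The discrete inf--sup condition \eqref{eq:discrete-inf-sup} then gives
\[
\norme{\boldsymbol{\eta}_h}\ls \ka\|\textsf{P}^{\perp}\boldsymbol{e}\|\ls \ka\|\textsf{P}^{\perp}(\E-\Ph\E)\|+\ka\|\textsf{P}^{\perp}\boldsymbol{\eta}_h\|.
\]
The first term is $\ls \ka h\inf\norme{\E-\bv_h}$ by \eqref{eq:pprSvphv1}. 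For the second term, repeat the duality argument above with $\boldsymbol{\eta}_h$ in place of $\boldsymbol{e}$. Since $\a(\boldsymbol{\eta}_h,\bv_h)=\a(\Ph\E-\E,\bv_h)=-2\ka^2(\textsf{P}^{\perp}(\E-\Ph\E),\textsf{P}^{\perp}\bv_h)$, we obtain
\[
\|\textsf{P}^{\perp}\boldsymbol{\eta}_h\|^2=\a(\boldsymbol{\eta}_h,\w)=\a(\boldsymbol{\eta}_h,\w-\pih\w)+\a(\boldsymbol{\eta}_h,\pih\w).
\]
The first piece is bounded by $\ka h\Csol\norme{\boldsymbol{\eta}_h}\|\textsf{P}^{\perp}\boldsymbol{\eta}_h\|$. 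For the second, replace $\pih\w$ by $\w$ up to an interpolation error, giving
\[
\ka^2\|\textsf{P}^{\perp}(\E-\Ph\E)\|\big(\|\w\|+\|\w-\pih\w\|\big)\ls \ka^2 h\inf\norme{\E-\bv_h}\cdot\ka^{-1}\Csol\|\textsf{P}^{\perp}\boldsymbol{\eta}_h\| .
\]
This yields
\[
\ka\|\textsf{P}^{\perp}\boldsymbol{\eta}_h\|\ls \ka^2h\Csol\norme{\boldsymbol{\eta}_h}+\ka^2 h\Csol\inf\norme{\E-\bv_h}.
\]
Taken alone this would again need $\ka^2h\Csol$ small, which is the expected sticking point.

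To reach the sharper condition we would refine the dual step by also projecting the dual solution elliptically, writing $\a(\boldsymbol{\eta}_h,\w)=\a(\boldsymbol{\eta}_h,\w-\Ph^*\w)+\dots$. Here $\w-\Ph^*\w$ has divergence-free part of size $h\norme{\w-\pih\w}\ls \ka h^2\Csol\|\cdot\|$, by \eqref{eq:pprSvphv1} applied to the adjoint. This produces the factor $\ka\cdot\ka h\cdot\ka h\Csol=\ka^3h^2\Csol$, in the spirit of \cite{chaumont2024sharp}. Once $\ka^3h^2\Csol\le C_0$ is small, the $\norme{\boldsymbol{\eta}_h}$ term is absorbed and we get $\norme{\boldsymbol{\eta}_h}\ls(1+\ka^3h^2\Csol)\inf\norme{\E-\bv_h}$, hence $\norme{\boldsymbol{e}}\ls\inf\norme{\E-\bv_h}$. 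Inserting this into $\|\textsf{P}^{\perp}\boldsymbol{e}\|\ls\ka h\Csol\norme{\boldsymbol{e}}$ gives \eqref{eq:Serr1}.
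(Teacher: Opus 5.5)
Your first duality step is correct but too lossy. It yields $\|\textsf{P}^{\perp}\boldsymbol{e}\|\ls\ka h\Csol\norme{\boldsymbol{e}}$, with the full energy error on the right. Everything then rests on the claim $\norme{\E-\Eh}\ls\inf_{\bv_h\in\V_h}\norme{\E-\bv_h}$ under $\ka^3h^2\Csol\le C_0$, and that claim is false in the preasymptotic regime. It would eliminate the pollution effect. In general the factor $1+\ka^2h\Csol$ in Theorem~\ref{thm:preErr} cannot be dropped (cf.\ the $\bO(\ka^3h_0^2)$ phase error in Section~\ref{sc5}). Moreover, $\ka^2h\Csol=\ka^3h^2\Csol/(\ka h)$ may be arbitrarily large while $\ka^3h^2\Csol\le C_0$.

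Your sketched refinement does not deliver quasi-optimality either. Carried out, $\a(\boldsymbol{\eta}_h,\w-\Ph\w)=-2\ka^2(\textsf{P}^{\perp}\boldsymbol{\eta}_h,\textsf{P}^{\perp}(\w-\Ph\w))$ is indeed $\ls\ka^3h^2\Csol\|\textsf{P}^{\perp}\boldsymbol{\eta}_h\|^2$. However, the term hidden in your ``$\dots$'', namely $\a(\boldsymbol{\eta}_h,\Ph\w)=2\ka^2(\textsf{P}^{\perp}(\E-\Ph\E),\textsf{P}^{\perp}\Ph\w)$, is only $\ls\ka h\Csol\inf_{\bv_h}\norme{\E-\bv_h}\,\|\textsf{P}^{\perp}\boldsymbol{\eta}_h\|$. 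The outcome is $\|\textsf{P}^{\perp}\boldsymbol{\eta}_h\|\ls\ka h\Csol\inf_{\bv_h}\norme{\E-\bv_h}$. Through the inf--sup bound this gives only $\norme{\boldsymbol{\eta}_h}\ls(\ka h+\ka^2h\Csol)\inf_{\bv_h}\norme{\E-\bv_h}$, not $(1+\ka^3h^2\Csol)\inf_{\bv_h}\norme{\E-\bv_h}$. Feeding the true energy bound into your first step gives $\ka h\Csol(1+\ka^2h\Csol)\inf_{\bv_h}\norme{\E-\bv_h}$. This misses \eqref{eq:Serr1} by the unbounded factor $\ka^2h\Csol$.

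The remedy is never to pass through $\norme{\boldsymbol{e}}$. The paper does this already in the first duality step, by subtracting the elliptic projection $\Ph\w$ instead of $\pih\w$. In your normalization of the dual problem,
$\|\textsf{P}^{\perp}\boldsymbol{e}\|^2=\a(\boldsymbol{e},\w-\Ph\w)=\hat{\a}(\E-\bv_h,\w-\Ph\w)-2\ka^2(\textsf{P}^{\perp}\boldsymbol{e},\textsf{P}^{\perp}(\w-\Ph\w))$.
Here the $\hat{\a}$-orthogonality of $\Ph$ ($\hat{\a}$ is Hermitian) lets you replace $\Eh$ by an arbitrary $\bv_h\in\V_h$. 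The first term is $\ls\ka h\Csol\,\norme{\E-\bv_h}\,\|\textsf{P}^{\perp}\boldsymbol{e}\|$. By \eqref{eq:pprSvphv1}, the second is $\ls\ka^3h^2\Csol\|\textsf{P}^{\perp}\boldsymbol{e}\|^2$ and is absorbed; this is exactly where $\ka^3h^2\Csol\le C_0$ enters.

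Equivalently, your own refined computation already proves the lemma if you stop at the $\bL^2$ level. Indeed, $\|\textsf{P}^{\perp}\boldsymbol{e}\|\le\|\textsf{P}^{\perp}(\E-\Ph\E)\|+\|\textsf{P}^{\perp}\boldsymbol{\eta}_h\|\ls(h+\ka h\Csol)\inf_{\bv_h}\norme{\E-\bv_h}$, and $h\le\ka h\Csol$ because $\ka\Csol=\Mk\ge1$.
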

	\begin{proof}
		Consider the problem
		\eq{ \label{eq:pDual2}
			\a(\w, \bph )= \big(\textsf{P}^{\perp} \left(\E-\Eh  \right), \bph  \big) \quad \forall \bph \in \V,
		}
		By Lemma~\ref{lem:dvpReg}, we have
		\[
		\ka \|\w\|_1+  \|\w\|_{\bH^1(\curl)} \ls  \ka \Csol  \|\textsf{P}^{\perp} \left(\E-\Eh  \right) \|.
		\]
		Now taking $\bph = \E-\Eh$ in \eqref{eq:pDual2}, and using \eqref{eq:orthogonality}, \eqref{eq:EP-orth}, Lemma~\ref{lem:Pherror}, and the assumption that ${\ka}^3 h^2\Csol$ is sufficiently small (which implies $\ka h \ls 1$), we derive
		\eqn{
			\|\textsf{P}^{\perp} \left(\E-\Eh  \right) \|^{2} 
			& =\big(\textsf{P}^{\perp} \left(\E-\Eh  \right),\E-\Eh   \big) \\
			& =  \a \left(\w,\E-\Eh   \right)= \a \left(\w- \Ph \w ,\E-\Eh  \right) \\
			& =  \hat \a \left(\w- \Ph \w ,\E-\bv_h  \right) -2{\ka}^2 \big(\w- \Ph \w, \textsf{P}^{\perp}(\E-\Eh )  \big) \\
			& \ls \norme{\w-  \Ph \w}   \norme{ \E-\bv_h} + 2{\ka}^2 |\big(\w- \Ph \w, \textsf{P}^{\perp}(\E-\Eh )  \big)| \\
			& \ls  \norme{\w-\pi_h \w}   \norme{ \E-\bv_h} + {\ka}^2 \|\textsf{P}^{\perp}(\w- \Ph \w)\| \|\textsf{P}^{\perp} \left(\E-\Eh  \right)\|  \\
			& \ls   \big( \ka \|\w\|_1+ \|\w\|_{ \bH^1(\curl)}\big)  \big( h  \norme{ \E-\bv_h  }+  {\ka}^2 h^2  \|\textsf{P}^{\perp} \left(\E-\Eh  \right)\| \big) \\
			& \ls      \ka h   \Csol  \|\textsf{P}^{\perp} \left(\E-\Eh   \right) \|  \norme{ \E-\bv_h  } + {\ka}^3  h^2 \Csol  \|\textsf{P}^{\perp} \left(\E-\Eh  \right) \|^{2}.
		}
		Therefore, there exists a constant $C_0>0$ such that, if ${\ka}^3 h^2 \Csol  \leq  C_0$, then
		\eqn{
			\|\textsf{P}^{\perp} \left(\E-\Eh\right) \|^{2} 
			\ls  \ka h \Csol \|\textsf{P}^{\perp} \left(\E-\Eh\right)  \|  \norme{\E-\bv_h}.
		}
		This implies \eqref{eq:Serr1}. The proof is complete.   	
	\end{proof}

	We now establish the following preasymptotic error estimates for the EEM \eqref{eq:EEM}. Although the  idea is essentially contained in the abstract framework of \cite[Lemma~7.18]{chaumont2024sharp}, we present a simpler proof here for the reader's convenience.
	\begin{theorem}[Error estimates for EEM]
		\label{thm:preErr}
		There exists a constant $C_0>0$ such that, if $\ka^3 h^2 \Csol \leq C_0$,  then
		\begin{equation}
			\energy{\E - \Eh }  \ls (1 + \ka^2 h \Csol) \inf_{\bv_h \in \V_h  } \energy{ \E - \bv_h }  \ls (\ka  h  + \ka^3 h^2 \Csol) ( \ka^{-1}\|\curl\E\|_1 + |\bm{E}|_1).
			\label{eq:error_bound}
		\end{equation}
	\end{theorem}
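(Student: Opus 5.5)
We will prove the first inequality in \eqref{eq:error_bound} with a G{\aa}rding-type argument based on the auxiliary form $\hat{\a}$. The divergence-free error part will be controlled by Lemma~\ref{lem:SerrEst}. The second inequality then follows from the interpolation estimate \eqref{eq:interpEn} with $\bv_h=\pih\E$.

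Fix an arbitrary $\bv_h\in\V_h$ and put $\bm{e}:=\E-\Eh$. The assumption $\ka^3h^2\Csol\le C_0$ implies $\ka h\ls 1$, because $\Csol\gtrsim \ka^{-1}$: indeed $\Mk\ge 1$. Hence, after shrinking $C_0$, Lemma~\ref{lem:Pherror} and the discrete inf--sup condition \eqref{eq:discrete-inf-sup} are available.

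Split $\bm{e}=(\E-\Ph\E)+(\Ph\E-\Eh)$. The first part satisfies $\norme{\E-\Ph\E}\ls\inf_{\bv_h}\norme{\E-\bv_h}$ by \eqref{eq:PhEnerEst}. For the discrete part $\bm{\xi}_h:=\Ph\E-\Eh\in\V_h$, we combine \eqref{eq:discrete-inf-sup}, \eqref{eq:EP-orth}, and the Galerkin orthogonality \eqref{eq:orthogonality}. For every $\bph_h\in\V_h$ this gives
\[
\hat{\a}(\bm{\xi}_h,\bph_h)=\hat{\a}(\E-\Eh,\bph_h)=\a(\E-\Eh,\bph_h)+2\ka^2(\textsf{P}^{\perp}\bm{e},\textsf{P}^{\perp}\bph_h)=2\ka^2(\textsf{P}^{\perp}\bm{e},\textsf{P}^{\perp}\bph_h).
\]
It follows that
\[
\norme{\bm{\xi}_h}\ls\sup_{\bph_h}\frac{2\ka^2\|\textsf{P}^{\perp}\bm{e}\|\,\|\bph_h\|}{\norme{\bph_h}}\ls \ka\|\textsf{P}^{\perp}\bm{e}\|.
\]
Lemma~\ref{lem:SerrEst} bounds this by $\ka^2h\Csol\inf_{\bv_h}\norme{\E-\bv_h}$. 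The triangle inequality then yields $\norme{\bm{e}}\ls(1+\ka^2h\Csol)\inf_{\bv_h}\norme{\E-\bv_h}$.

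For the second inequality, choose $\bv_h=\pih\E\in\V_h$ and apply \eqref{eq:interpEn}:
\[
\inf_{\bv_h}\norme{\E-\bv_h}\ls h(|\curl\E|_1+\ka|\E|_1)=\ka h\,(\ka^{-1}|\curl\E|_1+|\E|_1).
\]
Multiplying by $(1+\ka^2h\Csol)$ gives the factor $\ka h+\ka^3h^2\Csol$, as claimed.

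The main obstacle is already handled by Lemma~\ref{lem:SerrEst}: a duality argument that, thanks to the smallness of $\ka^3h^2\Csol$, absorbs the term $\ka^2(\w-\Ph\w,\textsf{P}^{\perp}\bm e)$. The remaining work here is to check that the gradient component of the error contributes nothing extra, which is why we work with $\hat{\a}$ instead of a G{\aa}rding inequality in $\|\bm e\|$. Since $\textsf{P}^{\perp}$ annihilates gradients, only $\textsf{P}^{\perp}\bm e$ appears in the residual identity above, and no bound on $\textsf{P}^0\bm e$ is needed.
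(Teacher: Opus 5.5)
Your proof is correct, but it takes a different route from the paper's proof of this theorem.

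\emph{The paper's route.} It argues in Schatz style. It first derives the G{\aa}rding-type bound $\energy{\E-\Eh}\ls\energy{\E-\bv_h}+\ka\|\E-\Eh\|$ (see \eqref{eq:step1}). It then estimates $\|\E-\Eh\|$ by splitting it into $\textsf{P}^0$ and $\textsf{P}^{\perp}$ parts. The gradient part is treated by hand: it uses Galerkin orthogonality against $\nabla U_h^0$ and the discrete Helmholtz estimate \eqref{HD1w} applied to $\bv_h-\Eh$, and it absorbs an extra term $\ka h\|\E-\Eh\|$. The divergence-free part comes from Lemma~\ref{lem:SerrEst}.

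\emph{Your route.} You split through the elliptic projection, $\E-\Eh=(\E-\Ph\E)+(\Ph\E-\Eh)$, and control the discrete part with the discrete inf--sup condition \eqref{eq:discrete-inf-sup}. The treatment of the gradient component is then entirely contained in Lemma~\ref{lem:bcoev}, whose proof uses the same discrete Helmholtz decomposition.

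\emph{What each buys.} Your argument is shorter and needs no absorption step beyond the one already inside Lemma~\ref{lem:SerrEst}. It is also exactly the argument the paper uses later in the proof of Theorem~\ref{thm:EhHEhclose} to obtain $\energy{\bm{\theta}_h}\ls\ka\|\textsf{P}^{\perp}(\Eh-\E)\|$, so it unifies the two proofs. In exchange, the paper's route yields the explicit intermediate $\bL^2$ estimate $\|\E-\Eh\|\ls\|\E-\bv_h\|+(h+\ka h\Csol)\energy{\E-\bv_h}$, which your decomposition does not give directly.

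Your justification that $\ka^3h^2\Csol\le C_0$ forces $\ka h\ls 1$ (via $\Mk\ge 1$, i.e.\ $\Csol\ge\ka^{-1}$) is also correct. It makes explicit a point the paper only asserts.
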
 
	
	\begin{proof}
		Without loss of generality, we suppose that $\ka  h \leq \tilde C_0$ . For any $\bv_h\in\V_h$,  we denote by $\ph:=\vh -\Eh$.  Since $\ph \in \Vh$, Lemma~\ref{lem:HD} yields the Helmholtz decompositions
		\begin{align}
			& \ph=\textsf{P}^0 \ph +\textsf{P}^{\perp}\ph   =\textsf{P}^0_h \ph +\textsf{P}^{\perp}_h \ph ,   \label{eq:PhHD} \\
			& \dive \textsf{P}^{\perp}\ph   =0 \text { in } \Omega, \quad \big(\textsf{P}^{\perp}_h \ph,
			\nabla \psi_{h} \big)  =0 \quad \forall \psi_{h} \in U_{h}^{0},   \\
			&  \|\textsf{P}^{\perp}\ph -\textsf{P}^{\perp}_h \ph  \|   \ls
			h \|\curl \ph  \|.  \label{eq:PhPerr}
		\end{align} 
		
		First, we bound the energy norm error by the $\bL^2$ norm error.
		By  \eqref{eq:orthogonality} and \eqref{eq:bCont}, we have
		\begin{align*}
			\energy{\E-\Eh}^2&=  \a(\E -\Eh,\E -\Eh)   + 2\ka^2 \|\E -\Eh\|^2 \\
			&=  \a(\E -\Eh, \E - \vh) + 2\ka^2 \|\E -\Eh\|^2 \\
			&\ls   \energy{\E -\Eh}  \energy{ \E-  \vh} + \ka^2 \|\E -\Eh\|^2.
		\end{align*} 
		This means that  
		\begin{equation}
			\energy{ \E-\Eh}\ls  \energy{\E-\vh}+ \ka  \|\E-\Eh\|.
			\label{eq:step1}
		\end{equation}
		Using \eqref{eq:step1} and the triangle inequality, we have
		\begin{equation}
			\|\curl \ph\|  \ls  \energy{\ph} \ls \energy{\E-\vh}+\ka  \|\E-\Eh\|.
			\label{eq:curlPhzEstimate}
		\end{equation}

		Next we consider the estimate of $\|\E -\Eh\|$. Since $\textsf{P}_h^0\ph\in\nabla U_h^0$, the orthogonality
		\eqref{eq:orthogonality} implies
		\[
		(\E - \Eh,\textsf{P}_h^0\ph)=0.
		\]
		Consequently,
		\[
		(\textsf{P}^0(\E - \Eh),\textsf{P}_h^0\ph)=0.
		\]
		Using \eqref{eq:PhHD}--\eqref{eq:PhPerr} and $ (\textsf{P}^0(\E - \Eh),\textsf{P}^{\perp}\ph)=0$, we obtain
		\eqn{
			\|\textsf{P}^0(\E - \Eh)\|^2
			&=
			(\textsf{P}^0(\E - \Eh),\E- \vh) 
			+
			(\textsf{P}^0(\E - \Eh),
			\textsf{P}_h^{\perp}\ph-\textsf{P}^{\perp}\ph)\\
			&\ls
			\|\textsf{P}^0(\E - \Eh)\|
			\Big(
			\|\E-\vh\|+h\|\curl\ph\|
			\Big).
		}
		Hence by \eqref{eq:step1} and  \eqref{eq:curlPhzEstimate} we have
		\eq{ \label{eq:PerpEEML2err}
			\|\textsf{P}^0(\E - \Eh)\| 
			&\ls
			\|\E-\vh\| 
			+h\energy{\E - \vh} + \ka h \|\E-\Eh\|.
		}
		Combining \eqref{eq:PerpEEML2err} with Lemma~\ref{lem:SerrEst} and  assuming  that $\ka h$ is sufficiently small, we obtain
		\eq{
			\| \E - \Eh \| 	&\leq  \|\textsf{P}^0(\E - \Eh)\| + \|\textsf{P}^{\perp}(\E - \Eh)\| \notag  \\ 
			&\ls
			\|\E-\vh\|
			+(h+\ka h \Csol) \energy{\E-\vh}.
		}
		
		Finally, substituting  the above estimate into \eqref{eq:step1}, and using Lemma~\ref{lem:Pherror} yields 
		\eqn{
			\energy{ \E-\Eh}  
			& \ls \energy{\E - \vh} + \ka \|\E-\vh\|
			+(\ka h+\ka^2 h \Csol) \energy{\E-\vh} \\
			& \ls 
			(1+ \ka^2 h \Csol)  \energy{\E-\vh}.
		}
		Taking the infimum over $\bv_h\in\V_h$ and applying Lemma~\ref{lem:interpEst} completes the proof of this theorem.
	\end{proof}

	\subsection{Some supercloseness results}
	\label{subsec:super}
	To establish our superconvergence results, we first recall a superconvergence property for the error between the exact solution  $\E$ and its edge-element interpolant  $\hat{\E}_h:=\pi_h \E$ on cuboid meshes. The proof of this result can be found in \cite[Lemma~3]{LinGlobal2000}.
	\begin{lemma} 
		Let $\E$ be the solution of \eqref{Maxwell} and let $\hat{\E}_h$ denote its edge element interpolant. 
		If $\E \in \bH^3(\Omega)$, then 
		\label{lem:3.2}
		\begin{equation}\label{eq:weak_estimate}
			\Big| ( \curl(\E - \hat{\E}_h),  \curl \bv_h ) \Big| 
			\ls  h^2 \|\E\|_{3}\,\|\curl \bv_h\|  
			\quad \forall \bv_h \in \V_h.
		\end{equation}
		Moreover, if $\E \in \bH^2(\Omega)$, then
		\begin{equation}\label{eq:weakL2_estimate}
			\big|(\E - \hat{\E}_h, \bv_h)\big|
			\ls  h^2 \|\E\|_{2}\,\|\bv_h\|  
			\quad \forall \bv_h \in \V_h.
		\end{equation}
	\end{lemma}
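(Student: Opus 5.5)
The plan is a local, element-by-element argument. On each cuboid we combine a Taylor expansion of $\E$ about the element centre with exact integration identities for the lowest-order N\'ed\'elec interpolant. The global bounds then follow by summation and Cauchy--Schwarz, using that $\Th$ is uniform with side lengths $h_x,h_y,h_z$.

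For \eqref{eq:weakL2_estimate} we work componentwise. Take the $x$-component on an element $K=[x_c\pm h_x/2]\times[y_c\pm h_y/2]\times[z_c\pm h_z/2]$. Here $(\pih\E)_1\in Q_{0,1,1}$ is determined by the four edge averages of $E_1$ along the $x$-edges of $K$, and $(\bv_h)_1$ also lies in $Q_{0,1,1}(K)$. Write $(\bv_h)_1=a+b\,(y-y_c)+c\,(z-z_c)+d\,(y-y_c)(z-z_c)$ and test $E_1-(\pih\E)_1$ against each of the four basis monomials.
\begin{itemize}
\item For $E_1\in Q_{0,1,1}$ the error is zero, and for $E_1$ depending only on $x$ the $L^2$ error integrates to zero against every test function. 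Hence the local functional $\ell_K(E_1):=\int_K (E_1-(\pih\E)_1)q$, with $q$ a fixed basis monomial, vanishes on all polynomials of degree $\le 1$.
\item The Bramble--Hilbert lemma (scaled) then gives $|\ell_K(E_1)|\ls h^2|E_1|_{H^2(K)}\|q\|_{L^2(K)}$, using norm equivalence on the finite-dimensional space $Q_{0,1,1}$.
\item Summing over $K$ yields $h^2\|\E\|_2\|\bv_h\|$.
\end{itemize}
This part is routine.

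For \eqref{eq:weak_estimate} we use the commuting property $\curl\pih\E=\Pi_h^{\rm div}\curl\E$. Here $\Pi_h^{\rm div}$ is the lowest-order Raviart--Thomas interpolant onto $Q_{1,0,0}\times Q_{0,1,0}\times Q_{0,0,1}$, defined by face moments. Since $\curl\bv_h$ has first component in $Q_{0,1,1}$ (as it must, being $\partial_y v_3-\partial_z v_2$ with $v_3\in Q_{1,1,0}$, $v_2\in Q_{1,0,1}$), we need
\[
\Big|\int_K (w_1-\Pi^{\rm div}_1 w_1)\,p\Big|\ls h^2\|w_1\|_{H^2(K)}\|p\|_{L^2(K)},\qquad w=\curl\E,\ p\in Q_{0,1,1}(K).
\]
A first Bramble--Hilbert pass only gives $O(h)$. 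Note that $w_1-\Pi^{\rm div}_1w_1$ vanishes on $Q_{1,0,0}$ but not on terms like $y$ or $z$, which are not captured by the face moments on $x=$const faces.

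To recover $O(h^2)$ we split the error into an interior-symmetric part and an $x$-dependent part.
\begin{itemize}
\item \textbf{Interior part.} Since $\Pi^{\rm div}_1 w_1$ depends linearly on $x$ through the face averages, we use the Lin-type identity
\[
\int_K (w_1-\Pi^{\rm div}_1w_1)\,p=\int_K (w_1-\bar w_1)\,p+\text{(face-average difference terms)},
\]
where $\bar w_1$ is the $yz$-average at fixed $x$. For $p$ linear in $y,z$, the leading $O(h)$ contribution is $\int_K \partial_y w_1(y-y_c)p$-type terms. These are not small elementwise, but they are independent of the $x$-interpolation error.
\item \textbf{Cancellation.} Pair these contributions with the analogous terms in $\int_K(\partial_yw_1)(y-y_c)\,\partial_y$-type expressions. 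Integrating by parts via the function $\tfrac12[(y-y_c)^2-h_y^2/4]$, which vanishes on the $y$-faces, converts them to $O(h^2)\|w\|_{H^1}$ times derivatives of $p$ plus terms with $\|w\|_{H^2}$. Inverse estimates then absorb the derivatives of $p$, and the resulting face terms telescope across the uniform mesh.
\item \textbf{Summation.} The residual involves $|\curl\E|_2\le\|\E\|_3$, which gives the stated bound.
\end{itemize}

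The main obstacle is this cancellation in the curl estimate. We would first try to verify the needed orthogonality by direct expansion: write $w_1$ up to quadratic terms and check that all first-order terms integrate to zero against $Q_{0,1,1}$. If that fails, we would rely on the uniform tensor structure and the $\V_h$ boundary conditions (vanishing tangential traces) to handle the boundary elements in the telescoping.
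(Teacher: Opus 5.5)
Your argument for \eqref{eq:weakL2_estimate} is correct. The paper gives no proof of this lemma and simply quotes \cite[Lemma~3]{LinGlobal2000}, which uses integral identities on cuboids, so a local Bramble--Hilbert argument is a valid substitute. The curl part, however, rests on a miscalculation. For $\bv_h|_K\in Q_{0,1,1}\times Q_{1,0,1}\times Q_{1,1,0}$ you have $\partial_y v_3\in Q_{1,0,0}$, since differentiating $v_3\in Q_{1,1,0}$ in $y$ removes its $y$-dependence, and likewise $\partial_z v_2\in Q_{1,0,0}$. So $(\curl\bv_h)_1\in Q_{1,0,0}(K)$, not $Q_{0,1,1}(K)$; in general $\curl\Vh|_K\subset Q_{1,0,0}\times Q_{0,1,0}\times Q_{0,0,1}$, the lowest-order Raviart--Thomas space. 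With the wrong test space you are trying to prove an inequality that is false. Take $w_1=y$: then $w_1-\Pi^{\rm div}_1 w_1=y-y_c(K)$ on every $K$. The piecewise test function $p|_K=y-y_c(K)$ gives $\sum_K\int_K(w_1-\Pi^{\rm div}_1w_1)p\simeq h^2$, while $\|p\|\simeq h$ and $\|w_1\|_2\simeq 1$, so nothing better than $O(h)$ is possible. Hence your sketched cancellation (integration by parts with $\tfrac12[(y-y_c)^2-h_y^2/4]$, telescoping face terms, special treatment of boundary elements) cannot close, and as written \eqref{eq:weak_estimate} is not proved.

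With the correct space the difficulty disappears, and the curl estimate is as routine as the $L^2$ one. For $p\in Q_{1,0,0}(K)$ the functional $w_1\mapsto\int_K(w_1-\Pi^{\rm div}_1w_1)\,p$ vanishes on $P_1$. Constants and $x$ are reproduced, while $y-y_c$ and $z-z_c$ have zero averages on the two $x$-faces, so their interpolant is $0$ and $\int_K(y-y_c)\,p(x)\,\dd x=0$. The functional is bounded on $H^2(K)$, because face averages are $H^1$-bounded. The scaled Bramble--Hilbert lemma then gives $|\int_K(w_1-\Pi^{\rm div}_1w_1)p|\ls h^2|w_1|_{H^2(K)}\|p\|_{L^2(K)}$, and the same holds for the other two components. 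Take $w=\curl\E$, so that $\curl\pih\E=\Pi_h^{\rm div}\curl\E$ and $|\curl\E|_2\le\|\E\|_3$. Summing over elements with Cauchy--Schwarz yields \eqref{eq:weak_estimate}, with no interelement cancellation needed.
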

	
	Building on the above fundamental results, we can establish superconvergence estimates for 
	$ \energy{\Ph \E - \hat{\E}_h} $ and $ \energy{\Eh - \hat{\E}_h} $.
	\begin{lemma}[Supercloseness between $\Ph \E$ and $\hat{\E}_h$]
		\label{lem:PEhatEerr}
		There exists a  constant $\tilde C_0>0$ such that if $\ka h\leq\tilde C_0$, then the following estimate  holds:
		\begin{equation}
			\energy{\Ph \E - \hat{\E}_h}  \ls h^2 \left( \|\E\|_3 + \ka \|\E\|_2  + \ka^2 \|\E\|_1  \right) \ls \ka^2 h^2 C_{\E}. 
			\label{eq:superclosenessPhEhatE}
		\end{equation}
	\end{lemma}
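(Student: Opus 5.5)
Set $\bxi_h:=\Ph\E-\hat{\E}_h\in\V_h$ and estimate $\energy{\bxi_h}$ with the discrete inf--sup condition \eqref{eq:discrete-inf-sup}, which holds for $\ka h\le\tilde C_0$. For every $\bv_h\in\V_h$ the projection identity \eqref{eq:EP-orth} gives
\begin{align*}
\hat{\a}(\bxi_h,\bv_h)=\hat{\a}(\E-\hat{\E}_h,\bv_h)
=(\curl(\E-\hat{\E}_h),\curl\bv_h)-\ka^2(\E-\hat{\E}_h,\bv_h)+2\ka^2(\textsf{P}^{\perp}(\E-\hat{\E}_h),\textsf{P}^{\perp}\bv_h).
\end{align*}
It therefore suffices to show $|\hat{\a}(\E-\hat{\E}_h,\bv_h)|\ls h^2(\|\E\|_3+\ka\|\E\|_2+\ka^2\|\E\|_1)\energy{\bv_h}$; dividing by $\energy{\bv_h}$ and taking the supremum then gives the first inequality. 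The second inequality is immediate from the definition \eqref{CE} of $C_{\E}$.

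The first two terms are handled directly by Lemma~\ref{lem:3.2}. The curl term is bounded by $h^2\|\E\|_3\|\curl\bv_h\|\le h^2\|\E\|_3\energy{\bv_h}$. The mass term is bounded by $\ka^2h^2\|\E\|_2\|\bv_h\|\le \ka h^2\|\E\|_2\energy{\bv_h}$.

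The main obstacle is the third term. Here $\textsf{P}^{\perp}\bv_h$ is no longer a discrete field, so \eqref{eq:weakL2_estimate} does not apply. I will write $\textsf{P}^{\perp}=I-\textsf{P}^0$ and use self-adjointness of the projection to get
\begin{align*}
(\textsf{P}^{\perp}(\E-\hat{\E}_h),\textsf{P}^{\perp}\bv_h)=(\E-\hat{\E}_h,\bv_h)-(\E-\hat{\E}_h,\textsf{P}^0\bv_h).
\end{align*}
The first piece is again of size $h^2\|\E\|_2\|\bv_h\|$, which gives the $\ka h^2\|\E\|_2$ contribution.

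For the second piece I use the discrete Helmholtz decomposition $\bv_h=\textsf{P}^0_h\bv_h+\textsf{P}^{\perp}_h\bv_h$ from Lemma~\ref{lem:HD}. Then
\begin{align*}
\textsf{P}^0\bv_h=\textsf{P}^0_h\bv_h+(\textsf{P}^{\perp}_h\bv_h-\textsf{P}^{\perp}\bv_h).
\end{align*}
The part $\textsf{P}^0_h\bv_h$ lies in $\nabla U_h^0\subset\V_h$, so \eqref{eq:weakL2_estimate} applies and gives $h^2\|\E\|_2\|\bv_h\|$. For the remainder, \eqref{HD1w} and \eqref{eq:interpL2} give
\begin{align*}
|(\E-\hat{\E}_h,\textsf{P}^{\perp}_h\bv_h-\textsf{P}^{\perp}\bv_h)|\ls (h|\E|_1+h^2|\curl\E|_1)\,h\|\curl\bv_h\|.
\end{align*}
After multiplying by $2\ka^2$, this contributes $\ka^2h^2(\|\E\|_1+h\|\E\|_2)\energy{\bv_h}$. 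Since $\ka h\ls1$, the term $\ka^2h^3\|\E\|_2$ is at most $\ka h^2\|\E\|_2$.

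If the $\ka^2 h^2\|\E\|_1$ bound from this remainder were not sharp enough, a fallback is to note that $\textsf{P}^0\bv_h=\nabla\phi$ with $\phi\in H^1_0$. Integrating by parts gives $(\E-\hat{\E}_h,\nabla\phi)=-(\dive(\E-\hat{\E}_h),\phi)$, which could then be handled using the commuting property of $\pih$ with the $Q_1$ interpolant. The stated bound, however, already allows the $\ka^2h^2\|\E\|_1$ term, so the direct argument above should suffice. Summing the three contributions and invoking \eqref{eq:discrete-inf-sup} proves \eqref{eq:superclosenessPhEhatE}.
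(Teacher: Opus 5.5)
Your proof is correct and follows the paper's skeleton: the discrete inf--sup condition \eqref{eq:discrete-inf-sup} applied to $\Ph\E-\hat{\E}_h$, the orthogonality \eqref{eq:EP-orth}, and Lemma~\ref{lem:3.2} for the curl and mass terms. You treat the penalty term $2\ka^2(\textsf{P}^{\perp}(\E-\hat{\E}_h),\textsf{P}^{\perp}\bv_h)$ by a genuinely different route.

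The paper bounds the norm $\|\textsf{P}^{\perp}(\E-\hat{\E}_h)\|\ls\ka h^2C_{\E}$. It first shows $\norme{\tPh\E-\hat{\E}_h}\ls h^2(\|\E\|_3+\ka\|\E\|_2)$ for the coercive projection $\tPh$. It then splits $\textsf{P}^{\perp}(\E-\hat{\E}_h)=\textsf{P}^{\perp}(\E-\tPh\E)+\textsf{P}^{\perp}(\tPh\E-\hat{\E}_h)$ and handles the first piece with the duality estimate \eqref{eq:pprSvphv2}, i.e.\ an Aubin--Nitsche argument for the auxiliary problem \eqref{eq:auxProb2}.

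You only need the functional $\bv_h\mapsto(\textsf{P}^{\perp}(\E-\hat{\E}_h),\bv_h)$ on $\V_h$. You get it by writing $\textsf{P}^{\perp}=I-\textsf{P}^0$ and splitting $\textsf{P}^0\bv_h$ via the discrete Helmholtz decomposition. Then \eqref{eq:weakL2_estimate} covers $\bv_h$ and $\textsf{P}^0_h\bv_h\in\nabla U_h^0\subset\V_h$, and \eqref{HD1w} with \eqref{eq:interpL2} covers the remainder. For the $\textsf{P}^0_h\bv_h$ piece you implicitly use $\|\textsf{P}^0_h\bv_h\|\le\|\bv_h\|$, which follows from the discrete orthogonality \eqref{HD1div}; it is worth stating.

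In both proofs the $\ka^2h^2\|\E\|_1$ term is a first-order interpolation error times an extra factor $h$. That factor comes from duality in the paper and from \eqref{HD1w} in yours; both rest on the regularity of divergence-free fields on the convex domain. Your version is shorter and dispenses with $\tPh$ and the auxiliary problem entirely. The paper's version produces the stronger, standalone $\bL^2$ bound \eqref{eq:superclosenessSEhE} on the divergence-free part of the interpolation error, rather than only a bound tested against discrete fields.

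Your integration-by-parts fallback is not needed, and as stated it would not work. $\hat{\E}_h$ is only tangentially continuous and has no global divergence in $L^2$, so elementwise integration by parts leaves normal-jump face terms.
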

	\begin{proof}
		We first claim that 
		\begin{equation}
			\norme{\tPh \E - \hat{\E}_h}  \ls h^2 \left( \|\E\|_3 + \ka \|\E\|_2 \right),
			\label{eq:superclosenesstPhEhatE}
		\end{equation}
		\begin{equation}
			\|\textsf{P}^{\perp}(\E - \hat{\E}_h)\| \ls h^2 \left( \ka^{-1} \|\E\|_3 + \|\E\|_2 + \ka \|\E\|_1 \right) \ls \ka h^2 C_{\E}.
			\label{eq:superclosenessSEhE}
		\end{equation}
		For \eqref{eq:superclosenesstPhEhatE}, by the coercivity of $\tilde{\a}$ in \eqref{eq:b2Coer}, the Galerkin orthogonality \eqref{eq:EP2-orth}, and the triangle inequality, we have
		\eqn{
			\norme{\tPh \E - \hat{\E}_h}^2 
			& \ls |\tilde{\a}(\tPh \E - \hat{\E}_h,\tPh \E - \hat{\E}_h)| \\
			& = |\tilde{\a}(\E - \hat{\E}_h,\tPh \E - \hat{\E}_h)| \\
			&  \ls  |(\curl(\E - \hat{\E}_h),\curl (\tPh \E- \hat{\E}_h))| + {\ka}^2 |(\E - \hat{\E}_h, \tPh \E - \hat{\E}_h)|.
		}
		Note that $\tPh \E- \hat{\E}_h \in \Vh$. Hence, the two terms on the right-hand side can be estimated by \eqref{eq:weak_estimate} and \eqref{eq:weakL2_estimate}, respectively, which yields
		\eqn{
			\norme{\tPh \E - \hat{\E}_h}^2 
			& \ls  h^2 \left( \|\E\|_3 + \ka \|\E\|_2 \right)   \norme{\tPh \E - \hat{\E}_h}.
		}
		Cancelling $\norme{\tPh \E - \hat{\E}_h}$ from both sides immediately gives \eqref{eq:superclosenesstPhEhatE}.
		
		For \eqref{eq:superclosenessSEhE}, by the triangle inequality, \eqref{eq:pprSvphv2}, \eqref{eq:superclosenesstPhEhatE}, and the stability of $\textsf{P}^{\perp}$, we obtain
		\eqn{
			\|\textsf{P}^{\perp}(\E - \hat{\E}_h)\| 
			& \leq \|\textsf{P}^{\perp}(\E - \tPh \E)\| + \|\textsf{P}^{\perp}(\tPh \E - \hat{\E}_h)\|   \\
			& \ls  h \inf_{\bv_h \in \V_h }  \norme{ \E  - \bv_h} + \ka^{-1} \norme{\tPh \E - \hat{\E}_h} \\ 
			& \ls h^2 \left( \ka^{-1} \|\E\|_3 + \|\E\|_2  + \ka \|\E\|_1 \right)\\
			& \ls \ka h^2 C_{\E}.
		}
		This completes the proof of the two claims \eqref{eq:superclosenesstPhEhatE} and \eqref{eq:superclosenessSEhE}.
		
		We next prove \eqref{eq:superclosenessPhEhatE}. By the Galerkin orthogonality \eqref{eq:EP-orth} and the triangle inequality, for any $\bv_h \in \V_h$, we have
		\eqn{
			|\hat{\a}(\PE - \hat{\E}_h,\bv_h)| 
			&  = |\hat{\a}(\E - \hat{\E}_h,\bv_h)|  \\ 
			&  \ls  |(\curl(\E - \hat{\E}_h),\curl \bv_h)| + {\ka}^2 |(\E - \hat{\E}_h,\bv_h)|   + {\ka}^2 |\big( \textsf{P}^{\perp}(\E - \hat{\E}_h),\bv_h \big)|.
		}
		The three terms on the right-hand side are estimated by \eqref{eq:weak_estimate}, \eqref{eq:weakL2_estimate}  and \eqref{eq:superclosenessSEhE}, respectively, giving
		\eqn{
			|(\curl(\E - \hat{\E}_h), \curl \bv_h )|
			\ls  h^2 \|\E\|_3 \|\curl \bv_h\|
			&\le h^2 \|\E\|_3 \norme{\bv_h},   \\
			{\ka}^2|(\E - \hat{\E}_h, \bv_h)|  \ls  {\ka}^2 h^2 \|\E\|_2 \|\bv_h\|
			& \ls  \ka h^2 \|\E\|_2 \norme{\bv_h}, \\
			{\ka}^2 |\big( \textsf{P}^{\perp}(\E - \hat{\E}_h),\bv_h \big)| \leq \ka^2 \|\textsf{P}^{\perp}(\E - \hat{\E}_h)\| \|\bv_h\| 
			& \ls  \ka^2 h^2 C_{\E}\norme{\bv_h}.
		} 
		Therefore,
		\eqn{
			|\hat{\a}(\PE - \hat{\E}_h,\bv_h)|  \ls  \ka^2 h^2 C_{\E} \norme{\bv_h}.
		}
		Combining the above estimate with the discrete inf-sup condition \eqref{eq:discrete-inf-sup} and applying it to $\bu_h=\PE-\hat{\E}_h$, we obtain the desired supercloseness estimate \eqref{eq:superclosenessPhEhatE}.
	\end{proof}

	\begin{theorem}[Supercloseness between $\Eh$ and $\hat{\E}_h$]
		\label{thm:EhHEhclose}
		There exists  a constant  $C_0>0$ such that if  $\ka^3 h^2\Csol \leq C_0$, then the following estimate  holds:
		\begin{equation}
			\energy{\Eh - \pih\E }    \ls \ka^3 h^2 \Csol C_{\E},
			\label{eq:superclosenessEhhatEh}
		\end{equation}
		where $C_{\E}$ is defined in \eqref{CE}. 		
	\end{theorem}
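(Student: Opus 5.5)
We split via the elliptic projection,
\[
\Eh-\pih\E=(\Eh-\Ph\E)+(\Ph\E-\hat{\E}_h).
\]
Lemma~\ref{lem:PEhatEerr} bounds the second term by $\ka^2h^2C_{\E}$. Since $\Csol\gtrsim\ka^{-1}$, this is at most a constant times $\ka^3h^2\Csol C_{\E}$. It therefore remains to estimate $\bxi_h:=\Eh-\Ph\E\in\V_h$ in the energy norm. For this we use the discrete inf--sup condition \eqref{eq:discrete-inf-sup} for $\hat\a$, which is available because $\ka^3h^2\Csol\le C_0$ together with $\Csol\gtrsim\ka^{-1}$ forces $\ka h$ to be small.

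To bound $\hat\a(\bxi_h,\bv_h)$, combine \eqref{eq:EP-orth} with the Galerkin orthogonality \eqref{eq:orthogonality}. For every $\bv_h\in\V_h$,
\[
\hat\a(\Eh-\Ph\E,\bv_h)=\hat\a(\Eh-\E,\bv_h)=\a(\Eh-\E,\bv_h)+2\ka^2(\textsf{P}^{\perp}(\Eh-\E),\textsf{P}^{\perp}\bv_h)=-2\ka^2(\textsf{P}^{\perp}(\E-\Eh),\bv_h).
\]
Hence
\[
|\hat\a(\bxi_h,\bv_h)|\le 2\ka^2\|\textsf{P}^{\perp}(\E-\Eh)\|\,\|\bv_h\|\le 2\ka\|\textsf{P}^{\perp}(\E-\Eh)\|\,\norme{\bv_h},
\]
and \eqref{eq:discrete-inf-sup} gives $\norme{\bxi_h}\ls \ka\|\textsf{P}^{\perp}(\E-\Eh)\|$.

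The main step is to show that $\|\textsf{P}^{\perp}(\E-\Eh)\|\ls \ka^2h^2\Csol C_{\E}$, which is better than what Lemma~\ref{lem:SerrEst} gives with the interpolation bound inserted (that route only yields $\ka^2h^2\Csol$ times an $H^1$-type norm, losing the superclose structure). We repeat the duality argument of Lemma~\ref{lem:SerrEst}: let $\w$ solve \eqref{eq:pDual2}, so that $\ka\|\w\|_1+\|\w\|_{\bH^1(\curl)}\ls\ka\Csol\|\textsf{P}^{\perp}(\E-\Eh)\|$. As in that proof,
\[
\|\textsf{P}^{\perp}(\E-\Eh)\|^2=\hat\a(\w-\Ph\w,\E-\Eh)-2\ka^2(\w-\Ph\w,\textsf{P}^{\perp}(\E-\Eh)).
\]
The second term is bounded, using \eqref{eq:pprSvphv1} and \eqref{eq:interpEn}, by $\ka^3h^2\Csol\|\textsf{P}^{\perp}(\E-\Eh)\|^2$, which is absorbed because $\ka^3h^2\Csol$ is small. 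In the first term, \eqref{eq:EP-orth} lets us replace $\E-\Eh$ by $\E-\Ph\E$, and then
\[
\hat\a(\w-\Ph\w,\E-\Ph\E)=\hat\a(\w,\E-\Ph\E)=\overline{\hat\a(\E-\Ph\E,\w)}\cdot(\text{Hermitian symmetry}),
\]
which is bounded by $\norme{\w-\pih\w}\,\norme{\E-\Ph\E}$. That bound only reproduces the Lemma~\ref{lem:SerrEst} rate, so we instead insert $\hat{\E}_h$ and write
\[
\hat\a(\w-\Ph\w,\E-\Ph\E)=\hat\a(\w-\Ph\w,\E-\hat{\E}_h)+\hat\a(\w-\Ph\w,\hat{\E}_h-\Ph\E).
\]
The second piece is at most $h\ka\Csol\|\textsf{P}^{\perp}(\E-\Eh)\|\cdot\ka^2h^2C_{\E}$ by Lemma~\ref{lem:PEhatEerr}, which is of higher order. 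The first piece equals $\hat\a(\w,\E-\hat{\E}_h)-\hat\a(\Ph\w,\E-\hat{\E}_h)$. The discrete part is estimated by \eqref{eq:weak_estimate}, \eqref{eq:weakL2_estimate} and \eqref{eq:superclosenessSEhE}, giving $\ka h^2C_{\E}\ka\norme{\Ph\w}\ls \ka^2h^2C_{\E}\,\ka^{-1}\cdot\ka\Csol\|\cdot\|$. The continuous part is handled by integrating by parts: $\hat\a(\w,\E-\hat{\E}_h)=(\textsf{P}^{\perp}(\E-\Eh)+2\ka^2\textsf{P}^{\perp}\w\text{-type terms},\E-\hat{\E}_h)$, and we control it through the weak estimate of $\E-\hat{\E}_h$ against $\w$ approximated by $\pih\w$. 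Collecting these bounds gives $\|\textsf{P}^{\perp}(\E-\Eh)\|\ls\ka^2h^2\Csol C_{\E}$, hence $\norme{\bxi_h}\ls\ka^3h^2\Csol C_{\E}$, and the theorem follows.

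The expected obstacle is the continuous part $\hat\a(\w,\E-\hat{\E}_h)$, which has only low regularity in $\w$. If the argument above fails, the fallback is to use $\ka\|\w\|_1$ together with $\|\E-\hat{\E}_h\|\ls h|\E|_1+\dots$ and accept the bound $\ka^3h^2\Csol$ times norms that are dominated by $C_{\E}$.
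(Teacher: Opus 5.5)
Your decomposition $\Eh-\pih\E=(\Eh-\Ph\E)+(\Ph\E-\hat{\E}_h)$ matches the paper's proof, as do the treatment of the second piece by Lemma~\ref{lem:PEhatEerr} with $\Csol\ge\ka^{-1}$ (as $\Mk\ge1$) and the reduction $\norme{\Eh-\Ph\E}\ls\ka\|\textsf{P}^{\perp}(\E-\Eh)\|$ via \eqref{eq:discrete-inf-sup}. What you misjudge is the last step. Lemma~\ref{lem:SerrEst} combined with \eqref{eq:interpEn} already gives
\[
\ka\|\textsf{P}^{\perp}(\E-\Eh)\|\ls\ka^2h\Csol\cdot h\bigl(|\curl\E|_1+\ka|\E|_1\bigr)\le\ka^3h^2\Csol\bigl(\ka^{-1}\|\E\|_2+\|\E\|_1\bigr)\le\ka^3h^2\Csol\,C_{\E}.
\]
This holds because $C_{\E}$ contains exactly $\|\E\|_1$ and $\ka^{-1}\|\E\|_2$. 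The two powers of $h$ come from the duality factor in \eqref{eq:Serr1} and from the best-approximation error, so no superclose structure is lost, and this is all the paper does. Your ``fallback'' is therefore not a weaker bound: it is the complete proof of the theorem as stated.

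The refined duality argument you call the main step is unnecessary. As written it is also only a sketch, because the continuous part $\hat{\a}(\w,\E-\hat{\E}_h)$ is never actually estimated. It can be closed. Since $\dive\w=0$, you have $\hat{\a}(\w,\bphi)=(\textsf{P}^{\perp}(\E-\Eh)+2\ka^2\w,\bphi)$, and the first argument lies in the range of $\textsf{P}^{\perp}$. Then \eqref{eq:superclosenessSEhE} together with $\ka^2\|\w\|\ls\ka\Csol\|\textsf{P}^{\perp}(\E-\Eh)\|$ bounds this term by $\ka^2h^2\Csol C_{\E}\|\textsf{P}^{\perp}(\E-\Eh)\|$. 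The result, however, is the same order as the direct route, and it now depends on $\|\E\|_3$, which the direct route avoids. Note also that your term $\hat{\a}(\w-\Ph\w,\hat{\E}_h-\Ph\E)$ vanishes identically by \eqref{eq:EP-orth}, because $\hat{\E}_h-\Ph\E\in\V_h$.
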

	
	\begin{proof}
		\begingroup
		\allowdisplaybreaks	
		
		Denote by $\bm{\theta}_h = \Eh - \PEp$. Then $\Eh - \hat{\E}_h = \bm{\theta}_h + \PEp - \hat{\E}_h$.
		Using the Galerkin orthogonality \eqref{eq:orthogonality}, \eqref{eq:EP-orth} and the discrete $\inf$-$\sup$ condition \eqref{eq:discrete-inf-sup}, we have for any $\bv_h \in \V_h$,
		\begin{align*}
			\energy{\bm{\theta}_h} & \ls \sup_{ {\bm 0}\neq \bv_h\in\Vh} \frac{\abs{	\hat{\a}( \bm{\theta}_h, \bv_h)}}{\energy{\bv_h}}  \\
			&= \sup_{ {\bm 0}\neq \bv_h\in\Vh} \frac{\abs{	\hat{\a}( \Eh - \PEp, \bv_h)}}{\energy{\bv_h}}     \\
			&= \sup_{ {\bm 0}\neq \bv_h\in\Vh} \frac{\abs{	\hat{\a}( \Eh - \E, \bv_h)}}{\energy{\bv_h}}     \\
			&= \sup_{ {\bm 0}\neq \bv_h\in\Vh} \frac{ 2\ka ^2 \abs{ (   \textsf{P}^{\perp}(\Eh-\E), \bv_h)}}{\energy{\bv_h}}   \\
			&\ls  \sup_{ {\bm 0}\neq \bv_h\in\Vh} \frac{ 2\ka   \abs{ (   \textsf{P}^{\perp}(\Eh-\E), \bv_h)}}{ \|\bv_h\| }   \\ 
			&\ls \ka \|  \textsf{P}^{\perp}(\Eh-\E)\|.
		\end{align*}
		Then by the mesh condition $\ka^3 h^2\Csol \leq C_0$, \eqref{eq:Serr1} and Lemma~\ref{lem:interpEst}, we know that
		\eq{ \energy{\bm{\theta}_h} \ls \ka \|  \textsf{P}^{\perp}(\Eh-\E)\| \ 
			\ls  \ka^2 h \Csol \inf_{\bv_{h} \in \V_h} \energy{ \E -\bv_h}	\ls  \ka^2 h^2 \Csol (\|\E\|_2 + \ka \|\E\|_1).
			\label{eq:SmErrL2}
		}
		Combining \eqref{eq:SmErrL2} and Lemma~\ref{lem:PEhatEerr}, we obtain \eqref{eq:superclosenessEhhatEh}.  The proof is completed.
		\endgroup
	\end{proof}

	\section{The PPR curl recovery} \label{sc3}
	We next introduce our PPR operator for the lowest-order N\'ed\'elec edge element method on three-dimensional cuboid meshes. The PPR technique was originally proposed in \cite{Zhang2004New} for the gradient recovery of nodal finite elements, and here we extend the idea to the edge element setting.  
	
	\subsection{Construction of the curl recovery operator  \texorpdfstring{$\Gh$}{\Gh}} \label{subscGh}
	We now define our curl recovery operator $\Gh$  element by element  via  local least-squares fitting. For any $\bv_h\in\Nh$, the recovered curl $\Gh\bv_h$ is constructed as follows.  
	
	\begin{enumerate}
		\item  Let $K \in \Th$ be any cuboid element.   We can associate with $K$ a local patch $W_K\subset\Omega$, 
		which is defined as the $2\times 2 \times 2$ block of elements such that $W_K$ consists of $8$ neighboring elements (including $K$ itself).  On this patch, we seek a vector-valued polynomial of the form  
		\eq{\label{eq:ph}
			\bp_h = (\bp_h^{(1)},\bp_h^{(2)},\bp_h^{(3)})^{\top} \in 
			Q_{1,2,2} \times Q_{2,1,2} \times Q_{2,2,1}(W_K),
		}
		which best approximates the edge data of $\bv_h$ in the least-squares sense,  
		i.e., $\bp_h$ is the minimizer of  the following problem:
		\begin{equation}
			\bp_h = \arg\min_{\bq \in Q_{1,2,2}\times Q_{2,1,2}\times Q_{2,2,1}(W_K)}
			J(\bq),
			\qquad 
			J(\bq) := \sum_{e \in \mathcal{E}(W_K)} | m_e(\bq) - m_e(\bv_h) |^2 .
			\label{eq:LSQ_patch}
		\end{equation}

		\item The recovered {\rm curl} of $\bv_h$ on $K$ is defined by  
		\begin{equation}
			\Gh \bv_h|_K := (\curl  \bp_h)|_K .
			\label{eq:curl_recovery}
		\end{equation}
		
		\item Repeating the above procedure for every $K \in \Th$ gives the global recovered field $\Gh \bv_h$.
	\end{enumerate}
	Since $K$ is a cuboid element, each edge $e$ is parallel to one of the coordinate axes, i.e., $\btau_e \in \{\mathbf{e}_x,\mathbf{e}_y,\mathbf{e}_z\}$. Therefore, for any edge $e$ parallel to the $x$-axis, $m_e(\bp_h)=\int_e \bp_h^{(1)}\,\dd s$ and is independent of $\bp_h^{(2)}$ and $\bp_h^{(3)}$, and similarly for edges parallel to $y$ or $z$. Denoting by $\mathcal{E}_x(W_K)$, $\mathcal{E}_y(W_K)$  and $\mathcal{E}_z(W_K)$  the sets of edges of $W_K$ parallel to the $x$-, $y$-, and $z$-axes, respectively, we obtain
	\[
	J(\bp_h) = \sum_{e\in \mathcal{E}_x(W_K)} \Big|\int_e (\bp_h^{(1)} - \bv_{h}^{(1)}) \,\dd s\Big|^2
	+ \sum_{e\in \mathcal{E}_y(W_K)} \Big|\int_e (\bp_h^{(2)} - \bv_{h}^{(2)}) \,\dd s\Big|^2
	+ \sum_{e\in \mathcal{E}_z(W_K)} \Big|\int_e (\bp_h^{(3)} - \bv_{h}^{(3)}) \,\dd s\Big|^2.
	\]
	Hence, $J$ naturally decomposes as $J(\bp_h)=J_1(\bp_h^{(1)})+J_2(\bp_h^{(2)})+J_3(\bp_h^{(3)})$, where each term depends only on a single component. Consequently, the minimization problem \eqref{eq:LSQ_patch}
	decouples into three independent scalar least-squares problems
	\eq{\label{eq:threeLS}
		\bp_h^{(1)} = \arg\min_{q_1 \in Q_{1,2,2}} J_1(q_1), \quad
		\bp_h^{(2)} = \arg\min_{q_2 \in Q_{2,1,2}} J_2(q_2), \quad
		\bp_h^{(3)}= \arg\min_{q_3 \in Q_{2,2,1}} J_3(q_3),
	}
	which can be solved separately.
	The problem \eqref{eq:threeLS} leads to the linear least-squares systems
	\begin{equation} \label{eq:threeLinear}
		(A^{(i)})^{ \top} A^{(i)} \bc^{(i)} = (A^{(i)})^{ \top} \mathbf{U}^{(i)} , \qquad 
		A^{(i)}_{j,k} = m_{e_{j}^{i^\sharp}}(\bphi_k^{i^\sharp}), \quad 
		\mathbf{U}^{(i)}_j = m_{e_{j}^{i^\sharp}}(\bv_h), \; i=1,2,3,
	\end{equation}
	where $\{\bphi_k^{x}\}_{k=1}^{18},\{\bphi_k^{y}\}_{k=1}^{18},\{\bphi_k^{z}\}_{k=1}^{18}$ are chosen bases of $Q_{1,2,2}$, $ Q_{2,1,2}$ and $Q_{2,2,1}$, respectively.
	The solution vector $\bc$ uniquely determines $\bp_h$ by \eqref{eq:ph} and $\bp_h^{(i)}=\sum_{j=1}^{18}\bc_j^{(i)} \bphi_j^{i^\sharp}$ for $i=1,2,3$. 
	\begin{remark}\label{rem:PPRop}
		{\rm (a)}
		For two- and three-dimensional tensor-product meshes, \cite{Wang2019PPRMaxwell} proposed a PPR scheme based on least-squares fitting at certain superconvergence points for the mixed Raviart-Thomas-N\'ed\'elec element method. For two-dimensional rectangular meshes, \cite{Wang2020curlRect} developed  PPR schemes for the lowest-order edge element method. However, neither of these works takes into account the dependence of the error on the wave number $\ka$.
		
		{\rm (b)} We remark that the recovered curl is defined elementwise and is generally discontinuous across element interfaces. From this perspective, our recovery operator can be viewed as a generalization of the ``modified PPR" method in \cite{Wu2010mppr} from gradient recovery to curl recovery for edge elements.

	\end{remark}
	\subsection{Well-posedness of problem \eqref{eq:threeLinear}} 
	In this subsection, we establish the well-posedness of problem \eqref{eq:LSQ_patch}, i.e., we prove that it admits a unique solution. 
	
	Without loss of generality, each patch $W_K$ can be shifted to the reference box 
	\[
	[-h_x,h_x]\times[-h_y,h_y]\times[-h_z,h_z].
	\]
	On this domain, a corresponding basis for each component of the space 
	\[
	Q_{1,2,2} \times Q_{2,1,2} \times Q_{2,2,1}(W_K)
	\] 
	can be chosen as follows. For the first component, we take 
	\begin{align*}
		{\bQ}^{(1)}(x,y,z) &= 
		\big[\,1,\ \eta,\ \eta^2,\ \zeta,\ \eta\zeta,\ \eta^2\zeta,\ \zeta^2,\ \eta\zeta^2,\ \eta^2\zeta^2, \\
		&\quad \xi,\ \xi\eta,\ \xi\eta^2,\ \xi\zeta,\ \xi\eta\zeta,\ \xi\eta^2\zeta,\ \xi\zeta^2,\ \xi\eta\zeta^2,\ \xi\eta^2\zeta^2\,\big]^{\top},
	\end{align*}
	for the second component,
	\begin{align*}
		\bQ^{(2)}(x,y,z) &=
		\big[1,\ \zeta,\ \zeta^2,\ \xi,\ \xi\zeta,\ \xi\zeta^2,\ \xi^2,\ \xi^2\zeta,\ \xi^2\zeta^2, \\
		&\quad \eta,\ \eta\zeta,\ \eta\zeta^2,\ \xi\eta,\ \xi\eta\zeta,\ \xi\eta\zeta^2,\ \xi^2\eta,\ \xi^2\eta\zeta,\ \xi^2\eta\zeta^2\big]^{\top},
	\end{align*}
	and for the third component,
	\begin{align*}
		{\bQ}^{(3)}(x,y,z) &= 
		\big[\,1,\ \xi,\ \xi^2,\ \eta,\ \xi\eta,\ \xi^2\eta,\ \eta^2,\ \xi\eta^2,\ \xi^2\eta^2, \\
		&\quad \zeta,\ \xi\zeta,\ \xi^2\zeta,\ \eta\zeta,\ \xi\eta\zeta,\ \xi^2\eta\zeta,\ \eta^2\zeta,\ \xi\eta^2\zeta,\ \xi^2\eta^2\zeta\,\big]^{\top}.
	\end{align*}
	Here, the normalized local coordinates are defined by
	\[
	\xi(x,y,z)=\tfrac{x}{h_x},\qquad 
	\eta(x,y,z)=\tfrac{y}{h_y},\qquad 
	\zeta(x,y,z)=\tfrac{z}{h_z}.
	\]
	
	We next show that the   matrices $A^{(i)}$ associated with these basis functions satisfy 
	\[
	\rank A^{(i)} = 18, \qquad i=1,2,3,
	\] 
	which ensures that problem \eqref{eq:LSQ_patch} is uniquely solvable and therefore well posed.
	To make the argument precise, we need to specify an explicit indexing of the edges $\mathcal{E}(W_K)$ and the corresponding basis functions in 
	$Q_{1,2,2} \times Q_{2,1,2} \times Q_{2,2,1}(W_K)$. 
	This indexing will be fixed throughout the subsequent analysis.

	\subsubsection{Indexing of $\mathcal{E}(W_K)$} 
	Since the patch $W_K$ is the union of the eight sub-cubes obtained by splitting
	$[-h_x,h_x]\times[-h_y,h_y]\times[-h_z,h_z]$ at the planes $x=0$, $y=0$, $z=0$, the nodal grid is \(\{-h_x, 0, h_x\} \times \{-h_y, 0, h_y\} \times \{-h_z, 0, h_z\}\). Denote by
	\[
	x_0 = -h_x, \; x_1 = 0, \; x_2 = h_x, \qquad
	y_0 = -h_y, \; y_1 = 0, \; y_2 = h_y, \qquad
	z_0 = -h_z, \; z_1 = 0, \; z_2 = h_z.
	\]
	Edges parallel to the coordinate $x$, $y$ and  $z$ axes are indexed by
	\eqn{
		e^{x}_{\iota_x(i,j,k)} &:= [x_{i-1},x_i] \times \{y_j\} \times \{z_k\}, 
		& i &= 1,2,\ j,k = 0,1,2, \\
		e^{y}_{\iota_y(i,j,k)} &:= \{x_i\} \times [y_{j-1},y_j] \times \{z_k\},
		& j &= 1,2,\ i,k = 0,1,2, \\
		e^{z}_{\iota_z(i,j,k)} &:= \{x_i\} \times \{y_j\} \times [z_{k-1},z_k],
		& k &= 1,2,\ i,j = 0,1,2,
	}
	respectively,  where   $\iota_x(i,j,k)=i+2j+6k$,  $\iota_y(i,j,k)=j+2k+6i$ and  $\iota_z(i,j,k)=k+2i+6j$  (see Figure~\ref{fig:patch}).
	
	\begin{figure}[h]
		\centering
		\includegraphics[scale=1]{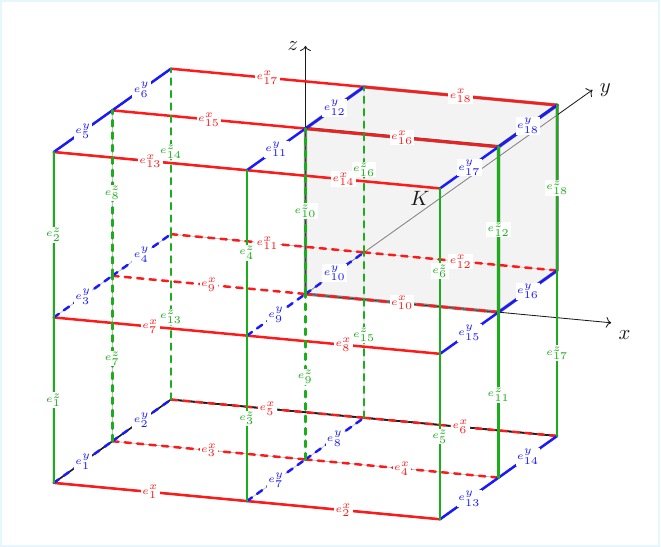}
		\caption{The edge numbering of the patch $W_K$ associated with element $ {K}$. }
		\label{fig:patch}
	\end{figure}
	
	\subsubsection{Tensor-product bases indexing}
	We could write the  tensor-product polynomial space $Q_{1,2,2}$ in the following compact form 
	\[
	Q_{1,2,2}(W_K)=\operatorname{span}\{\xi^a\eta^b\zeta^c:\ a\in\{0,1\},\ b,c\in\{0,1,2\}\},
	\]
	and analogously $Q_{2,1,2}(W_K)$ and $Q_{2,2,1}(W_K)$ by permuting the roles of
	$\xi,\eta,\zeta$.  
	To formalize this ordering, we  define the index mapping
	\eqn{ 
		&\sigma_x(a,b,c) := 1 + b + 3c + 9a, \quad a\in\{0,1\},\; b,c\in\{0,1,2\};  \\
		&\sigma_y(a,b,c) := 1 + c + 3a + 9b, \quad  a \in \{0,1,2\}, \ b \in \{0,1\},\ c \in \{0,1,2\}; \\
		&\sigma_z(a,b,c) := 1 + a + 3b + 9c, \quad a \in \{0,1,2\}, \ b \in \{0,1,2\},\ c \in \{0,1\} } 
	so that the monomial $\xi^a\eta^b\zeta^c$ is placed at position
	\[
	\sigma_x(a,b,c)\ \text{in }  {\bQ}^{(1)}, \qquad
	\sigma_y(a,b,c)\ \text{in }  {\bQ}^{(2)}, \qquad
	\sigma_z(a,b,c)\ \text{in }  {\bQ}^{(3)}.
	\]

	\subsubsection{Explicit entries of  $A^{(i)}$}
	Let $
	\xi_i:=\frac{x_i}{h_x}, 
	\eta_j:=\frac{y_j}{h_y}$ and $	\zeta_k:=\frac{z_k}{h_z}$. 
	For $\sigma_{\star}(a,b,c)$ denoting the  enumeration  of the corresponding basis 
	and $\iota_{\star}(i,j,k)$ the edge index for each direction $\star\in\{x,y,z\}$, the entries of the coefficient matrix $A^{(i)}$ read:
	\begin{align}
		A^{(1)}[\iota_x(i,j,k),\sigma_x(a,b,c)]
		&=\int_{x_{i-1}}^{x_i}\xi^a \dd  x\cdot\eta_j^b\zeta_k^c
		=h_x \frac{\xi_i^{a+1}-\xi_{i-1}^{a+1}}{a+1}\,\eta_j^b\,\zeta_k^c, \label{eq:A1} \\[0.5ex]
		A^{(2)}[\iota_y(i,j,k),\sigma_y(a,b,c)]
		&=\xi_i^a\cdot\int_{y_{j-1}}^{y_j}\eta^b \dd y \cdot\zeta_k^c
		=h_y \xi_i^a\,\frac{\eta_j^{b+1}-\eta_{j-1}^{b+1}}{b+1}\,\zeta_k^c,   \label{eq:A2}\\[0.5ex]
		A^{(3)}[\iota_z(i,j,k),\sigma_z(a,b,c)]
		&=\xi_i^a\,\eta_j^b\cdot\int_{z_{k-1}}^{z_k}\zeta^c \dd z
		=h_z \xi_i^a\,\eta_j^b\,\frac{\zeta_k^{c+1}-\zeta_{k-1}^{c+1}}{c+1}.   \label{eq:A3}
	\end{align}
	Thus a direct calculation yields  $ A^{(1)} = h_x  A_0, \, A^{(2)}= h_y  A_0$  and $A^{(3)} = h_z  A_0$  where the explicit form of $A_0$ can be directly calculated by using \eqref{eq:A1}--\eqref{eq:A3} and is given in Appendix~\ref{sc:A0}. 	It is straightforward to verify that  $\rank {A}_0 =18$, which implies that the least-squares systems in \eqref{eq:threeLinear} have unique solutions. Consequently, the curl recovery operator $\Gh$ is well-defined.

	\subsection{Properties of  \texorpdfstring{$\Gh$}{\Gh}}
	
	In this subsection, we investigate the fundamental properties of the recovery operator $\Gh$. 
	We first show that the recovered curl can be represented as an affine combination of the exact curl values 
	at a finite number of sampling points, which highlights the local structure of the curl recovery operator 
	(Lemma~\ref{lem:curl-affine-combination}). 
	Building on this result, we then establish two key properties of $\Gh$: 
	stability and polynomial preserving property
	(Theorem~\ref{thm:GhProperty}). 
	These results form the basis for the subsequent superconvergence analysis.
	
	\begin{lemma}\label{lem:curl-affine-combination}
		Let $W_K$ be any physical patch obtained from the reference patch 
		$\widehat W=[-1,1]^3$ by a diagonal affine map 
		$\bx=F(\bxi):=A\bxi+\bb$ with $A=\mathrm{diag}(h_x,h_y,h_z)$. 
		Let $\bp_h\in Q_{1,2,2}\times Q_{2,1,2}\times Q_{2,2,1}(W_K)$ 
		be the least-squares fitted polynomial associated with 
		$\bv_h\in  \Nh$ as in \eqref{eq:LSQ_patch}.
		
		Then, for every point $\bx\in W_K$, each component of the recovered curl $(\curl \bp_h)_i(\bx)$
		is an affine combination of $(\curl \bv_h)_i$ at some sampling points. That is,
		\begin{equation}\label{eq:combina-general}
			(\curl \bp_h)_i(\bx)
			= \sum_{m=1}^{M_i}\alpha_{i,m}(\bxi)\;(\curl \bv_h)_i\big(\bx^{(i,m)}\big),
			\qquad i=1,2,3,
		\end{equation}
		where 
		\begin{itemize}
			\item the sampling points $\bx^{(i,m)}=F(\bxi^{(i,m)})$ are the affine images of a finite set of reference points $\{\bxi^{(i,m)}\}_{m=1}^{M_i}\subset\widehat W$ (e.g.\ face centers, differing for each component), 
			\item the coefficient functions $\alpha_{i,m}(\bxi):\widehat W\to\mathbb{R}$ are  polynomials and satisfy $ \sum_{m=1}^{M_i}\alpha_{i,m}(\bxi)$ $=1$ for $i=1,2,3$. 
		\end{itemize}
		
		In particular, the families $\{\alpha_{i,m}\}$ are identical to those on the reference patch, i.e.\ they do not depend on the scaling parameters $(h_x,h_y,h_z)$.
	\end{lemma}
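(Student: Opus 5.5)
The plan is to exploit linearity and the explicit structure of the least-squares solution. Since $\rank A_0=18$, the normal equations \eqref{eq:threeLinear} give $\bc^{(i)}=(A^{(i)})^{+}\mathbf{U}^{(i)}$ with $(A^{(i)})^{+}=h_\star^{-1}A_0^{+}$, where $A_0^+=(A_0^\top A_0)^{-1}A_0^\top$ is independent of $(h_x,h_y,h_z)$. Thus each component $\bp_h^{(i)}$ is a linear combination of the edge moments $m_e(\bv_h)$, $e\in\mathcal{E}_{i^\sharp}(W_K)$, with coefficients in the reference basis $\bQ^{(i)}(\bxi)$, and these coefficients carry only a factor $h_\star^{-1}$. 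I will treat the first component $(\curl\bp_h)_1=\pa_y\bp_h^{(3)}-\pa_z\bp_h^{(2)}$; the others follow by the cyclic symmetry of the indexings $\iota_\star,\sigma_\star$.

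Step one is to rewrite $(\curl\bv_h)_1$ at suitable points via Stokes' theorem. For $\bv_h\in\Nh$, $(\curl\bv_h)_1$ on each element is affine in $x$ and constant in $y,z$. Its integral over a face $f$ perpendicular to $\mathbf{e}_x$ (at $x=x_i$, $y\in[y_{j-1},y_j]$, $z\in[z_{k-1},z_k]$) equals the circulation of $\bv_h$ around $\pa f$, i.e. a signed sum of four edge moments: two $y$-edge and two $z$-edge moments. Since $(\curl\bv_h)_1$ is constant on such a face (and single-valued there by tangential continuity), it equals $h_y^{-1}h_z^{-1}$ times this circulation, which is its value at the face center. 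This gives the natural sampling points for $i=1$: the centers of the $3\times2\times2=12$ $x$-faces of the patch, with analogous choices for $i=2,3$.

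Step two is the crucial algebraic step: show that $\pa_y\bp_h^{(3)}-\pa_z\bp_h^{(2)}$ depends on the moment data only through these circulations. The idea is to use the discrete exact sequence. The moment vectors $(\mathbf{U}^{(2)},\mathbf{U}^{(3)})$ restricted to $x=x_i$ are determined by the face circulations modulo discrete gradients of nodal values. For a gradient-type datum $m_e=\phi(\text{end})-\phi(\text{start})$, I will take the $Q_{2,2,2}$ interpolant $\Phi$ of the 27 nodal values. Then $\na\Phi\in Q_{1,2,2}\times Q_{2,1,2}\times Q_{2,2,1}$ reproduces the moments exactly, so $J(\na\Phi)=0$, uniqueness gives $\bp_h=\na\Phi$, and $\curl\bp_h=0$. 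Hence, by linearity, $(\curl\bp_h)_1(\bx)$ is a linear functional of the circulations, that is, $\sum_m\alpha_{1,m}(\bxi)(\curl\bv_h)_1(\bx^{(1,m)})$. Scaling shows the $h$-factors cancel: the moments scale like $h_y$ or $h_z$, $\pa_y=h_y^{-1}\pa_\eta$, and circulations are $h_yh_z$ times face values. So the $\alpha_{1,m}$ are reference polynomials, obtained from $A_0^+$ and $\pa_\eta\bQ^{(3)},\pa_\zeta\bQ^{(2)}$. One subtlety is that the $x$-components of the data also influence only $\bp_h^{(1)}$, which does not enter $(\curl\bp_h)_1$, so they can be ignored.

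Step three is the affine property $\sum_m\alpha_{1,m}=1$. Take $\bv_h=\pi_h\bq$ with $\bq=(0,0,y)$ (or $(0,-z,0)$), whose moments are reproduced exactly by $\bq$ itself, which lies in the fitting space. Then $\bp_h=\bq$ and $(\curl\bp_h)_1\equiv1$, while all face values of $(\curl\bv_h)_1$ equal $1$; hence $\sum_m\alpha_{1,m}\equiv1$. The main obstacle is step two, namely making rigorous that circulations together with gradients span all $y,z$ moment data, which requires counting dimensions on each plane $x=x_i$. The planes $y=y_j$ are not an issue, since they belong to the component-wise structure. If this counting is awkward, I would instead verify directly, by computing $A_0^+$, that the kernel of the circulation map is annihilated.
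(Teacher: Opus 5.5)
Your argument is correct, but it takes a different route from the paper. The paper first reduces to $\widehat W$ by a Piola transform; this plays the role of your moment-scaling argument. It then computes $\widehat\curl\,\widehat\bp_h$ directly with the explicit inverse $A_0^{-1}$ from the appendix. The result is regrouped into differences of values on neighboring parallel edges, i.e.\ tangential derivatives at the twelve $x$-face centers $f^x_m$. From this the paper reads off twelve explicit weights such as $\alpha_{1,1}=\frac18(\xi-1)\xi(2\eta-1)(2\zeta-1)$ and checks $\sum_m\alpha_{1,m}=1$ by inspection.

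You instead obtain the factorization structurally. Since $\nabla Q_{2,2,2}\subset Q_{1,2,2}\times Q_{2,1,2}\times Q_{2,2,1}$ and the fit is unisolvent ($A_0$ is square and invertible, so the fit is actually interpolation), discrete-gradient data are reproduced exactly by $\nabla\Phi$ and give zero recovered curl. Hence the recovered curl factors through the face circulations, which are $h_yh_z$ times the face-center values of $(\curl\bv_h)_1$. The partition of unity then follows from polynomial preservation applied to $(0,0,y)$, and $h$-independence follows from the scaling of moments.

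The point you flag as the main obstacle closes by exactly the count you suggest. On each plane $x=x_i$ there are $9$ nodes, $12$ edges and $4$ faces. The discrete gradient has an $8$-dimensional image (the grid is connected), and this image lies in the kernel of the circulation map. That map is onto $\mathbb{R}^4$, because each face has an edge on the outer boundary of the plane belonging to no other face. So its kernel also has dimension $12-4=8$, and the two spaces coincide.

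As for the trade-off: the paper's computation delivers the weights and sampling points explicitly, which is convenient for implementation. Your argument explains why the lemma holds, namely that the fitting space is compatible with the discrete de Rham sequence. It also avoids the $18\times 18$ algebra entirely and would carry over to other patches or fitting spaces with the same structure. It gives the $\alpha_{1,m}$ only implicitly, but that is all the stability proof in Theorem~\ref{thm:GhProperty} needs: it uses only that they are $h$-independent polynomials, hence bounded on $\widehat W$.
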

	Since the proof of this lemma is lengthy, it is deferred to Appendix~\ref{scA2}.

	\begin{theorem}\label{thm:GhProperty}
		The operator $\Gh$ enjoys the following  properties:  
		\begin{enumerate}[(i)]
			\item \textbf{Stability:}  
			For all $\bv_h\in\Nh$,  we have
			\eq{\label{eq:Gh_stability_local}
				\|\Gh \bv_h\|_{\bL^2(K)} &\ls\|\curl \bv_h\|_{\bL^2(W_K)}  \quad  \forall K \in \Th,\\	\label{eq:prop1}
				\|\Gh \bv_h\| &\ls \|\curl \bv_h\|.
			} 
			
			\item \textbf{Polynomial preserving:} If $\bp_h \in Q_{1,2,2} \times Q_{2,1,2} \times Q_{2,2,1}(W_K)$, then
			\eq{  \label{eq:prop2}
				\Gh \pi_h \bp_h|_K = \curl \bp_h|_K.
			} 
			
		\end{enumerate}
		
	\end{theorem}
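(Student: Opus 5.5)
We prove the two properties separately. Stability follows from Lemma~\ref{lem:curl-affine-combination} plus a scaling argument. Polynomial preservation follows from the uniqueness of the least-squares fit, i.e.\ $\rank A_0=18$.

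\textbf{Stability.} Fix $K\in\Th$ and $\bv_h\in\Nh$, and let $\bp_h$ be the fitted polynomial on $W_K$. By \eqref{eq:curl_recovery} and \eqref{eq:combina-general}, for every $\bx\in K$ and $i=1,2,3$,
\[
|(\Gh\bv_h)_i(\bx)| \le \sum_{m=1}^{M_i}|\alpha_{i,m}(\bxi)|\,|(\curl\bv_h)_i(\bx^{(i,m)})|.
\]
The $\alpha_{i,m}$ are fixed polynomials on $\widehat W$ that do not depend on $(h_x,h_y,h_z)$. Hence $\max_{\widehat W}|\alpha_{i,m}|\ls 1$, and $M_i$ is a fixed integer.

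Next, $\curl\bv_h$ restricted to each of the eight elements of $W_K$ belongs to a fixed finite-dimensional polynomial space, namely $Q_{1,0,0}\times Q_{0,1,0}\times Q_{0,0,1}$. If a sampling point lies on a face, we evaluate there the appropriate one-sided trace; the normal component of $\curl\bv_h$ is in fact continuous across faces. An inverse estimate on the element $K'\subset W_K$ containing $\bx^{(i,m)}$ then gives
\[
|(\curl\bv_h)_i(\bx^{(i,m)})|\ls |K'|^{-1/2}\|\curl\bv_h\|_{\bL^2(K')}.
\]
This follows by mapping to the reference cube, using equivalence of norms there, and scaling back, which produces the factor $(h_xh_yh_z)^{-1/2}$. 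Squaring and integrating over $K$, with $|K|=|K'|$ on the uniform mesh, yields \eqref{eq:Gh_stability_local}.

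To obtain \eqref{eq:prop1}, we sum \eqref{eq:Gh_stability_local} over $K\in\Th$. Each element belongs to at most $8$ patches, so
\[
\sum_{K}\|\curl\bv_h\|^2_{\bL^2(W_K)}\le 8\|\curl\bv_h\|^2.
\]
One technical point concerns elements near $\pa\Omega$. There $W_K$ is chosen as some $2\times2\times2$ block containing $K$, still of the same reference shape, and the overlap count is still bounded.

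\textbf{Polynomial preservation.} Let $\bp_h\in Q_{1,2,2}\times Q_{2,1,2}\times Q_{2,2,1}(W_K)$ and set $\bv_h=\pi_h\bp_h$. By the definition of $\pi_h$, $m_e(\bv_h)=m_e(\bp_h)$ for every edge $e\in\mathcal E(W_K)$. Hence $J(\bp_h)=0$, so $\bp_h$ is a minimizer of \eqref{eq:LSQ_patch}.

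Since $\rank A^{(i)}=18$ for each $i$, the normal equations \eqref{eq:threeLinear} have a unique solution, and so the minimizer is unique. It is therefore exactly $\bp_h$, and $\Gh\pi_h\bp_h|_K=\curl\bp_h|_K$.

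One check is needed first: $\pi_h\bp_h$ must be well defined. It is, because $\bp_h$ is smooth and $\pi_h$ only uses edge moments. Moreover, the patch fit only sees the edge moments of $\pi_h\bp_h$, so no global membership of $\bp_h$ in any space is required.

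The main obstacle is the stability step. Its real content is Lemma~\ref{lem:curl-affine-combination}, which we assume. Without it, one would argue directly: the map from the edge data to $\curl\bp_h$ annihilates edge data coming from gradients. This needs $\curl\pi_h\nabla\phi=0$ together with polynomial preservation applied to the gradients in the fitted space. One would then bound the map by $\|\curl\bv_h\|$ through a quotient-norm argument on the reference patch, and this argument is exactly what the lemma packages.
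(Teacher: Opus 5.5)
Your proof is correct and follows essentially the same route as the paper: stability comes from Lemma~\ref{lem:curl-affine-combination} (bounded, $h$-independent coefficients) combined with an inverse estimate and the bounded overlap of the patches, and polynomial preservation comes from $J(\bp_h)=0$ together with uniqueness of the fit ($\rank A_0=18$). The only cosmetic difference is in the stability step. You apply the inverse estimate pointwise at the sampling points, element by element, whereas the paper first bounds $\|\Gh\bv_h\|_{L^\infty(K)}$ by $\|\curl\bv_h\|_{L^\infty(W_K)}$ and then uses a single $L^\infty$--$\bL^2$ inverse inequality.
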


	\begin{proof} 
		By the definition of the PPR operator $\Gh$ 
		\eqn{
			(\Gh\bv_h)|_K=(\curl\bp_h)|_K,
		}
		where $\bp_h$ is determined by $m_{e}(\bv_h)$, $e \in \mathcal{E}(W_K)$.   By Lemma~\ref{lem:curl-affine-combination}, the recovered value $\Gh \bv_h$ at any point $\bz \in W_K$ is an affine combination of the values of $\curl \bv_h$ on some sampling points, which means that
		\eqn{
			|\Gh \bv_h( {\bz})| \ls \|\curl \bv_h\|_{L^{\infty}(W_K)}
		}
		Since $\curl\bp_h$ is a polynomial on $K$, we have
		\eqn{
			\|\Gh \bv_h\|_{L^{\infty}(K)} \ls \|\curl  \bv_h\|_{L^{\infty}(W_K)}
		}
		Using the inverse inequality, we have
		\eqn{ 
			\|\Gh \bv_h\|_{\bL^{2}(K)} &\ls h^{\frac{3}{2}} \|  \Gh \bv_h\|_{L^{\infty}(K)} \\
			&\ls h^{\frac{3}{2}} \|\curl  \bv_h\|_{L^{\infty}(W_K)} \\
			&\ls \|\curl  \bv_h\|_{\bL^2(W_K)}.
		}
		Thus, by the uniformly bounded overlap of the patches, we obtain
		\eq{
			\|\Gh \bv_h\| \ls \big(  \sum_{K\in \Th } \|\Gh \bv_h\|_{\bL^{2}(K)}^2 \big)^{\frac12}   \ls \|\curl \bv_h\|, 
		}
		which proves \eqref{eq:prop1}.
		The property \eqref{eq:prop2}  follows directly from the uniqueness of problem \eqref{eq:threeLinear}. This concludes the proof.
		
	\end{proof}

	We next investigate the approximation properties of the recovery operator $\Gh$ and subsequently establish superconvergence estimates for $\Gh\Eh$ to  $\curl \E$.
	
	\subsection{Error estimates for curl recovery}
	
	We now establish superconvergence estimates for the recovery operator $\Gh$. 
	
	\begin{theorem}[Recovered curl from  the edge element interpolant] 
		\label{thm:GhPiuerr}
		For any  $\bu \in \bH^i(\curl; \Om), i=1,2$, the following estimate holds:
		\begin{equation}\label{eq:Ghinterp_error}
			\| \Gh \pi_h \bu - \curl\bu \|  \ls h^i \| \bu \|_{\bH^i(\curl)}. 
		\end{equation}
	\end{theorem}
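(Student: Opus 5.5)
We argue element by element with a Bramble--Hilbert argument on the patch $W_K$, combining the polynomial preserving property \eqref{eq:prop2} with a local stability bound for $\Gh\pi_h$. Fix $K\in\Th$ with patch $W_K$. For any $\bq\in Q_{1,2,2}\times Q_{2,1,2}\times Q_{2,2,1}(W_K)$, \eqref{eq:prop2} gives $\Gh\pi_h\bq|_K=\curl\bq|_K$. Since $\Gh\pi_h$ on $K$ depends only on the edge moments of $\bu$ on $\mathcal{E}(W_K)$ and is linear in them, we can write
\[
(\Gh\pi_h\bu-\curl\bu)|_K=\Gh\pi_h(\bu-\bq)|_K-\curl(\bu-\bq)|_K .
\]
Hence
\[
\|\Gh\pi_h\bu-\curl\bu\|_{\bL^2(K)}\le \|\Gh\pi_h(\bu-\bq)\|_{\bL^2(K)}+\|\curl(\bu-\bq)\|_{\bL^2(K)}.
\]

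The key step is a local bound for $\Gh\pi_h\bv$ for general $\bv\in\bH^1(\curl;W_K)$. By \eqref{eq:Gh_stability_local}, we have $\|\Gh\pi_h\bv\|_{\bL^2(K)}\ls\|\curl\pi_h\bv\|_{\bL^2(W_K)}$. The commuting diagram property gives $\curl\pi_h\bv=\pi_h^{\dive}\curl\bv$, the lowest-order Raviart--Thomas face interpolant. Its local bound (face moments with the trace theorem and scaling) yields $\|\pi_h^{\dive}\bw\|_{\bL^2(T)}\ls\|\bw\|_{\bL^2(T)}+h|\bw|_{1,T}$ on each $T\subset W_K$. Therefore
\[
\|\Gh\pi_h\bv\|_{\bL^2(K)}\ls \|\curl\bv\|_{\bL^2(W_K)}+h|\curl\bv|_{\bH^1(W_K)}.
\]
An alternative route avoids the commuting diagram: Lemma~\ref{lem:curl-affine-combination} writes the recovered value as an affine combination of point values of $\curl\pi_h\bv$ on faces, and these equal face averages of $\curl\bv$ by Stokes' theorem. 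Either way, the same scaled trace estimate results.

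Now choose $\bq$ by a Bramble--Hilbert/averaged Taylor approximation on the reference patch $\widehat W$, then scale. For $i=1$, take $\bq$ to be a constant vector, so that $\curl\bq=\bm 0$. This gives $\|\curl(\bu-\bq)\|_{\bL^2(W_K)}+h|\curl\bu|_{\bH^1(W_K)}\le \|\curl\bu\|_{\bL^2(W_K)}+h|\curl\bu|_{\bH^1(W_K)}$, which is fine but only $\bO(1)$. To get order $h$, we need $\curl\bq$ to approximate $\curl\bu$ to first order, i.e.\ $\bq$ must contain all linear fields. This holds since $\mathcal{P}_1^3\subset Q_{1,2,2}\times Q_{2,1,2}\times Q_{2,2,1}$. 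Moreover $\mathcal{P}_2^3$ is contained as well: in the first component the monomial $\xi^2$ is missing, but $\curl(\xi^2\mathbf{e}_x)=\bm 0$, and similarly for the other components. Hence for every polynomial field $\bm{r}$ of degree $\le 2$ we can choose $\bq$ with $\curl\bq=\curl\bm r$.

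We therefore take $\bq$ to be the averaged Taylor polynomial of $\bu$ of degree $i$, modified by dropping its curl-free pure-power terms. Since we only need $\curl\bq$ to match $\curl\bm r$, and $\pi_h$ appears only through $\curl\pi_h$, the dropped gradient parts are harmless: $\curl\pi_h\nabla\phi=\bm 0$. Scaling on $W_K$ gives
\[
\|\curl(\bu-\bq)\|_{\bL^2(W_K)}+h|\curl(\bu-\bq)|_{\bH^1(W_K)}\ls h^i|\curl\bu|_{\bH^i(W_K)}.
\]
This uses that $\curl$ of the degree-$i$ Taylor polynomial of $\bu$ is the degree-$(i-1)$ Taylor polynomial of $\curl\bu$. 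Summing over $K$ with bounded overlap of the patches gives \eqref{eq:Ghinterp_error}, in fact with $|\curl\bu|_i$ on the right-hand side, which is bounded by $\|\bu\|_{\bH^i(\curl)}$.

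The main obstacle is the local stability of $\Gh\pi_h$ on non-discrete functions. In particular, one must verify that the least-squares output depends only on $\curl\pi_h\bv$, so that gradient parts drop out; this follows from Lemma~\ref{lem:curl-affine-combination}. The other delicate point is the scaled trace bound on the face moments, which is where the $h|\curl\bu|_1$ term, and hence the $\bH^1(\curl)$ regularity, is needed.
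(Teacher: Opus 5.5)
Your proposal is correct and follows essentially the same route as the paper. You split off a polynomial reproduced by \eqref{eq:prop2}, then bound $\Gh\pi_h(\bu-\bq)$ through the local stability \eqref{eq:Gh_stability_local}, the commuting relation $\curl\pi_h=\gamma_h^1\curl$, and the scaled bound $\|\gamma_h^1\w\|_{\bL^2(T)}\ls\|\w\|_{\bL^2(T)}+h|\w|_{1,T}$. The only difference is the comparison polynomial: the paper takes the order-$i$ macroelement N\'{e}d\'{e}lec interpolant $\Pi_h^i\bu$ on $W_K$ with $\curl\Pi_h^i\bu=\Gamma_h^i\curl\bu$, whereas you take an averaged Taylor polynomial with the gradient monomials $x^2\mathbf{e}_x$, $y^2\mathbf{e}_y$, $z^2\mathbf{e}_z$ removed, which works equally well (your phrase ``$\mathcal{P}_2^3$ is contained'' should read ``contained up to curl-free terms'').
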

	\begin{proof}
		Let $ \Pi_h^i $ be the $ i $-th-order EE interpolation operator on $ W_K$ (viewed as a single cuboidal macroelement),   and let $\Gamma_h^i$ and
		$\gamma_h^i$ be the $\bH(\mathrm{div})$-conforming interpolation operators
		on $W_K$ and $K$, respectively, for $i=1,2$.
		We have the following commuting properties (see, e.g., \cite{monk2003}) 
		\eq{  \label{eq:commut}
			\gamma_h^i \curl \bu = \curl \pi_h^i \bu \qaq \Gamma_h^i \curl \bu = \curl \Pi_h^i \bu, \quad i = 1, 2;
		}
		the stability of $\gamma_h^i$  
		\eq{
			\|\gamma_h^i  \bu \|_{\bL^2(K)}  \ls \| \bu -  \gamma_h^i  \bu \|_{\bL^2(K)}  +\|\bu\|_{\bL^2(K)} \ls h |\bu|_{1,K} + \|\bu\|_{\bL^2(K)},
			\label{eq:Stabgammahi} 
		}
		and the error estimates of $\Gamma_h^i$ and $\Pi_h^i$ (see, e.g., \cite[Theorem~16.4 and Theorem~16.10]{ErnFEI})
		\eq{
			\|\curl \bu - \Gamma_h^i  (\curl \bu) \|_{\bH^1(W_K)}  & \ls  h^{i-1} |\curl \bu|_{\bH^i(W_K)};
			\label{eq:StabGammahi} \\
			\| \bu - \Pi_h^2  \bu  \|_{\bL^2(W_K)}  + h \|  \bu - \Pi_h^2    \bu  \|_{\bH^1(W_K)} +    h^2 \|  \bu - \Pi_h^2    \bu  \|_{\bH^2(W_K)}   &  \ls  h^2 |  \bu|_{\bH^2(W_K)};
			\label{eq:ErrPih2} \\
			\| \curl \Pi_h^i \bu - \curl \bu \|_{\bL^2(W_K)}  &\ls  h^{i} |\curl \bu|_{\bH^i(W_K)}. \label{eq:ErrCurlPhi}
		} 
		By using the properties of $\Gh$ in Theorem~\ref{thm:GhProperty}, the interpolation error estimates \eqref{eq:ErrCurlPhi} and  \eqref{eq:commut}--\eqref{eq:StabGammahi}, we have
		\begin{align*}
			\| \Gh \pi_h \bu - \curl \bu \|_{\bL^2(K)} 
			&\leq \| \Gh \pi_h (\bu - \Pi_h^i \bu) \|_{\bL^2(K)} + \| \curl \Pi_h^i \bu - \curl \bu \|_{\bL^2(K)} \\
			& \ls  \| \curl \pi_h (\bu - \Pi_h^i \bu) \|_{\bL^2(W_K)} + h^i \| \curl \bu \|_{\bH^i(W_K)} \\
			& \ls  \| \gamma_h^1 \curl( \bu - \Pi_h^i \bu) \|_{\bL^2(W_K)} + h^i \| \curl \bu \|_{\bH^i(W_K)} \\
			& \ls \|\curl( \bu - \Pi_h^i \bu) \|_{\bL^2(W_K)} + h \| \curl (\bu - \Pi_h^i \bu) \|_{\bH^1(W_K)} \\
			& \qquad + h^i \| \curl \bu \|_{\bH^i(W_K)} \\
			& \ls  h \|\curl \bu - \Gamma_h^i(\curl \bu)\|_{\bH^1(W_K)} + h^i \| \curl \bu \|_{\bH^i(W_K)} \\
			& \ls  h^i \| \curl \bu \|_{\bH^i(W_K)},
		\end{align*}
		Taking $i=1$ and $i=2$ in the above estimate and then summing over all $K\in\Th$ yields \eqref{eq:Ghinterp_error}.
	\end{proof}

	\begin{lemma}[Recovered curl from the elliptic projection]
		\label{lem:GhPepErr}
		There exists a  constant $\tilde C_0>0$ such that if $\ka h\leq\tilde C_0$, then the following estimate  holds: 
		\begin{equation}
			\| \Gh \PEp - \curl \E \| \ls  h^2 \left( \|\E\|_3 + \ka \|\E\|_2 + \ka^2 \|\E\|_1 \right) \ls \ka^2 h^2 C_{\E}. 
		\end{equation}
	\end{lemma}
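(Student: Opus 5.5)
The plan is to insert the edge element interpolant $\hat{\E}_h=\pih\E$ and split by the triangle inequality and linearity of $\Gh$:
\[
\|\Gh\PEp-\curl\E\|\le \|\Gh(\PEp-\hat{\E}_h)\| + \|\Gh\pih\E-\curl\E\|.
\]
The first term is the discrete supercloseness part and the second is the pure recovery-of-interpolant part.

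For the first term, I will use the stability \eqref{eq:prop1} of Theorem~\ref{thm:GhProperty}. This applies because $\PEp-\hat{\E}_h\in\V_h\subset\Nh$, and it gives
\[
\|\Gh(\PEp-\hat{\E}_h)\|\ls\|\curl(\PEp-\hat{\E}_h)\|\le\energy{\PEp-\hat{\E}_h}.
\]
Under $\ka h\le\tilde C_0$, Lemma~\ref{lem:PEhatEerr} bounds this by $h^2(\|\E\|_3+\ka\|\E\|_2+\ka^2\|\E\|_1)$.

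For the second term, I will apply Theorem~\ref{thm:GhPiuerr} with $i=2$, which gives $\|\Gh\pih\E-\curl\E\|\ls h^2\|\E\|_{\bH^2(\curl)}$. Since $\E\in\bH^3(\Om)$, we have $\|\curl\E\|_2\ls\|\E\|_3$ and $\|\E\|_2$ is already present in the target bound. Hence this term is at most $h^2(\|\E\|_3+\|\E\|_2)$, and with $\ka\gtrsim1$ it is dominated by $h^2(\|\E\|_3+\ka\|\E\|_2)$.

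Adding the two bounds gives the first inequality. The second inequality is the identity
\[
h^2(\|\E\|_3+\ka\|\E\|_2+\ka^2\|\E\|_1)=\ka^2h^2 C_{\E},
\]
which follows from the definition \eqref{CE}.

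No step is a serious obstacle, since all the heavy lifting is done by Lemma~\ref{lem:PEhatEerr} and Theorem~\ref{thm:GhPiuerr}. The only points to check are that $\Gh$ is applied to elements of $\Nh$, so that stability is available, and that the regularity required by Theorem~\ref{thm:GhPiuerr}, namely $\E\in\bH^2(\curl;\Om)$, follows from $\E\in\bH^3(\Om)$.
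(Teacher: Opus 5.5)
Your proposal is correct and follows the paper's proof: you insert $\hat{\E}_h=\pih\E$, use the stability \eqref{eq:prop1} of $\Gh$ to reduce the first term to $\|\curl(\PEp-\hat{\E}_h)\|$, which Lemma~\ref{lem:PEhatEerr} bounds, and you bound $\|\Gh\hat{\E}_h-\curl\E\|$ by Theorem~\ref{thm:GhPiuerr} with $i=2$. Your additional checks, namely that $\PEp-\hat{\E}_h\in\Nh$, that $\ka\gtrsim 1$ absorbs $h^2\|\E\|_2$, and that the second inequality is an identity from \eqref{CE}, spell out details the paper leaves implicit.
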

	
	\begin{proof}
		By the triangle inequality and the stability of $\Gh$, one obtains
		\begin{align}
			\| \Gh \PEp - \curl \E \| 
			&\leq \| \Gh \PEp - \Gh \hat{\E}_h \| + \| \Gh \hat{\E}_h - \curl \E \| \nonumber \\
			&\ls \| \curl(\PEp - \hat{\E}_h) \| + \| \Gh \hat{\E}_h - \curl \E \|. \label{eq:error_decomp2}
		\end{align}
		Applying Lemma~\ref{lem:PEhatEerr} and Theorem~\ref{thm:GhPiuerr}  yields
		\begin{equation}
			\| \Gh \PEp - \curl \E \| \ls h^2 \bigl( \|\E\|_3 + \ka \|\E\|_2 + \ka^2 \|\E\|_1 \bigr) \ls \ka^2 h^2 C_{\E}. 
			\label{eq:stab_estimate}
		\end{equation}
		This concludes the proof.
	\end{proof}

	\begin{theorem}[Superconvergence of the recovered curl]
		\label{thm:GhErr}
		There exists a constant  $ C_0>0$ such that if  $\ka^3 h^2\Csol \leq C_0$, then the following estimate  holds:
		\begin{equation}
			\| \Gh \Eh - \curl{\E} \|  \ls (h^2+ \ka  h^2 \Csol) \left( \|\E\|_3 + \ka \|\E\|_2 +  \ka^2 \|\E\|_1 \right) \ls (\ka^2 h^2+ \ka^3 h^2 \Csol) C_{\E}.
			\label{eq:superclosenessGhEhCurlE}
		\end{equation}
		
	\end{theorem}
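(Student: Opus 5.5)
The plan is to split the error through the elliptic projection $\PE$:
\[
\Gh\Eh-\curl\E=\Gh(\Eh-\PE)+\big(\Gh\PE-\curl\E\big).
\]
Lemma~\ref{lem:GhPepErr} applies to the second term, because $\ka^3h^2\Csol\le C_0$ forces $\ka h$ to be small. Indeed, $\Csol=\ka^{-1}\Mk\gtrsim\ka^{-1}$ and $\ka\gtrsim1$ give $\ka^2h^2\lesssim\ka^3h^2\Csol$. Hence the second term is bounded by $h^2(\|\E\|_3+\ka\|\E\|_2+\ka^2\|\E\|_1)$.

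For the first term, $\bm\theta_h:=\Eh-\PE\in\V_h$ is a finite element function. The global stability \eqref{eq:prop1} of $\Gh$ from Theorem~\ref{thm:GhProperty} then gives
\[
\|\Gh\bm\theta_h\|\ls\|\curl\bm\theta_h\|\le\energy{\bm\theta_h}.
\]
This reduces matters to bounding $\energy{\bm\theta_h}$, which was already done inside the proof of Theorem~\ref{thm:EhHEhclose}. There, the discrete inf--sup condition \eqref{eq:discrete-inf-sup} together with the two orthogonalities \eqref{eq:orthogonality} and \eqref{eq:EP-orth} yields $\energy{\bm\theta_h}\ls\ka\|\textsf{P}^{\perp}(\Eh-\E)\|$.

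Lemma~\ref{lem:SerrEst} and Lemma~\ref{lem:interpEst} then give
\[
\energy{\bm\theta_h}\ls\ka^2h\Csol\,\energy{\E-\pih\E}.
\]
To get the sharper form with the factor $\ka h^2\Csol(\|\E\|_3+\ka\|\E\|_2+\ka^2\|\E\|_1)$, bound $\energy{\E-\pih\E}$ by $h(|\curl\E|_1+\ka|\E|_1)$ using \eqref{eq:interpEn}. Then use $|\curl\E|_1\ls\|\E\|_2$, so that
\[
\energy{\bm\theta_h}\ls\ka^2h^2\Csol(\|\E\|_2+\ka\|\E\|_1)\le\ka h^2\Csol(\ka\|\E\|_2+\ka^2\|\E\|_1).
\]
This is \eqref{eq:SmErrL2}, and it is dominated by the claimed bound.

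Adding the two contributions gives
\[
\|\Gh\Eh-\curl\E\|\ls(h^2+\ka h^2\Csol)\big(\|\E\|_3+\ka\|\E\|_2+\ka^2\|\E\|_1\big).
\]
The final inequality in \eqref{eq:superclosenessGhEhCurlE} follows by factoring out $\ka^2$ and recalling the definition \eqref{CE} of $C_{\E}$.

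The main thing to check is that the mesh condition suffices for all the earlier lemmas: $\ka h\le\tilde C_0$ is needed for $\Ph$ and the discrete inf--sup, and $\ka^3h^2\Csol\le C_0$ for Lemma~\ref{lem:SerrEst}. As argued above, the second implies the first after shrinking $C_0$. The key structural point, which makes the argument essentially a corollary, is that $\Gh$ is applied only to discrete functions. Its stability estimate then converts the supercloseness in Theorem~\ref{thm:EhHEhclose} (more precisely, the bound on $\bm\theta_h$) directly into a bound on the recovered curl.
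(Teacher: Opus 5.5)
Your proof is correct and is essentially the paper's argument. The only difference is that the paper splits through $\pih\E$ rather than $\PE$: it bounds $\|\curl(\Eh-\pih\E)\|$ by Theorem~\ref{thm:EhHEhclose} and $\|\Gh\pih\E-\curl\E\|$ by Theorem~\ref{thm:GhPiuerr}. However, Lemma~\ref{lem:GhPepErr} is itself proved by splitting through $\pih\E$, and Theorem~\ref{thm:EhHEhclose} is just \eqref{eq:SmErrL2} combined with Lemma~\ref{lem:PEhatEerr}. So both routes use exactly the same ingredients (stability of $\Gh$, \eqref{eq:SmErrL2}, Lemma~\ref{lem:PEhatEerr}, Theorem~\ref{thm:GhPiuerr}), merely grouped differently.
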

	\begin{proof}
		Similar to the proof of Lemma~\ref{lem:GhPepErr}, we have
		\eq{ \| \Gh \Eh - \curl{\E} \| & \ls   \| \Gh( \Eh - \hat{\E}_h)  \| + \|\Gh \hat{\E}_h -\curl \E \|  \notag \\
			& \ls   \| \curl( \Eh - \hat{\E}_h)  \| + \|\Gh \hat{\E}_h -\curl \E \|.   \label{eq:tri2} 
		}
		Then \eqref{eq:superclosenessGhEhCurlE} follows by combining \eqref{eq:tri2}, Theorems~\ref{thm:EhHEhclose} and \ref{thm:GhPiuerr}.
	\end{proof}
	\begin{remark}\label{rem:4.1}
		From Theorem~\ref{thm:preErr}, we know that $\|  \curl(\Eh - \E ) \|   \ls  (\ka   h + \ka^3 h^2 \Csol) C_{\E}$.  Therefore, it seems that the PPR operator improves the interpolation error from $\bO(\ka h)$ to $\bO(\ka^2 h^2)$ but keeps the pollution error unchanged.   Is it possible that Theorem~\ref{thm:GhErr} gives an overestimate? The answer is no. The following Theorem~\ref{thm:Ghsharp} and our
		numerical tests in Section~\ref{sc5} indicate that the pollution error is the same with or without the PPR recovery.
	\end{remark}

	\begin{theorem}
		\label{thm:Ghsharp}
		Suppose $\ka^3 h^2 \Csol \leq C_0$.  Then
		\begin{equation}
			\| \Gh \Eh - \curl \Eh \| \ls \ka h (\ka^{-1} {\| \curl\E \|_1} + \|\E\|_{1}).
			\label{eq:Ghsharp_main}
		\end{equation}
		Moreover, we have the refined estimate:
		\begin{equation}
			\abs{\| \Gh \Eh - \curl \Eh \| - \| \curl \E - \curl \PEp  \| }\ls (\ka^2 h^2 + \ka^4 h^3  \Csol)  C_{\E}.
			\label{eq:Ghsharp_refined}
		\end{equation}
	\end{theorem}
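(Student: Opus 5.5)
The plan is to compare $\Gh\Eh-\curl\Eh$ with the same quantity built from $\PEp$, which is only $\bO(\ka h)$ away from $\curl\E$ and carries no pollution. We split
\[
\Gh\Eh-\curl\Eh=\bigl(\Gh\PEp-\curl\PEp\bigr)+\bigl(\Gh-\curl\bigr)\bm{\theta}_h,\qquad \bm{\theta}_h:=\Eh-\PEp .
\]
The second term is handled by the stability \eqref{eq:prop1} of $\Gh$, which gives $\|(\Gh-\curl)\bm{\theta}_h\|\ls\|\curl\bm{\theta}_h\|\le\energy{\bm{\theta}_h}$. The bound \eqref{eq:SmErrL2} from the proof of Theorem~\ref{thm:EhHEhclose} then controls this by $\ka^2h\Csol\inf_{\bv_h}\energy{\E-\bv_h}$.

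For the first term we use the triangle inequality:
\[
\Gh\PEp-\curl\PEp=(\Gh\PEp-\curl\E)+(\curl\E-\curl\PEp).
\]
Lemma~\ref{lem:GhPepErr} bounds $\Gh\PEp-\curl\E$ by $\ka^2h^2C_{\E}$. This already gives the refined estimate \eqref{eq:Ghsharp_refined} after the reverse triangle inequality:
\[
\Bigl|\,\|\Gh\Eh-\curl\Eh\|-\|\curl(\E-\PEp)\|\,\Bigr|\ls \ka^2h^2C_{\E}+\energy{\bm{\theta}_h}.
\]
It remains to check that $\energy{\bm{\theta}_h}\ls \ka^4h^3\Csol C_{\E}$. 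By \eqref{eq:SmErrL2} and \eqref{eq:interpEn},
\[
\energy{\bm{\theta}_h}\ls \ka^2h\Csol\cdot h(\|\curl\E\|_1+\ka\|\E\|_1).
\]
Now $\|\curl\E\|_1\le\|\E\|_2=\ka\cdot\ka^{-1}\|\E\|_2$ and $\ka\|\E\|_1$ are both $\ls\ka C_{\E}$, so $\energy{\bm{\theta}_h}\ls\ka^3h^2\Csol C_{\E}$. This is one factor $\ka h$ short of the claim. The main obstacle is therefore to sharpen the estimate of $\bm{\theta}_h$.

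To sharpen it, I would revisit the chain in the proof of Theorem~\ref{thm:EhHEhclose}: $\energy{\bm{\theta}_h}\ls\ka\|\textsf{P}^{\perp}(\Eh-\E)\|$. I would then redo the duality argument of Lemma~\ref{lem:SerrEst} with $\vh=\hat{\E}_h$, keeping the supercloseness structure. In that duality argument the term $\hat\a(\w-\Ph\w,\E-\hat\E_h)$ can be rewritten as $\hat\a(\w,\E-\hat\E_h)-\hat\a(\Ph\w,\E-\hat\E_h)$. The first piece is treated with \eqref{eq:superclosenessSEhE} and the regularity of $\w$. The second is treated with Lemma~\ref{lem:3.2}, the quantities \eqref{eq:weak_estimate}, \eqref{eq:weakL2_estimate}, and \eqref{eq:superclosenessSEhE} applied to $\Ph\w\in\Vh$, which gains an extra $h$. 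The expected outcome is $\|\textsf{P}^{\perp}(\E-\Eh)\|\ls\ka h^2\Csol\cdot\ka^2h\,C_{\E}\cdot\ka^{-1}$-type bounds, so that $\ka\|\textsf{P}^\perp(\E-\Eh)\|\ls\ka^4h^3\Csol C_{\E}$. If this refinement fails, I would at least record the bound with $\ka^3h^2\Csol$.

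For the first estimate \eqref{eq:Ghsharp_main} we only need cruder bounds:
\[
\|\Gh\Eh-\curl\Eh\|\le\|\Gh\hat\E_h-\curl\hat\E_h\|+\|(\Gh-\curl)(\Eh-\hat\E_h)\|.
\]
In the first term, Theorem~\ref{thm:GhPiuerr} with $i=1$ and \eqref{eq:interpCurl} give $\|\Gh\hat\E_h-\curl\E\|+\|\curl(\E-\hat\E_h)\|\ls h\|\E\|_{\bH^1(\curl)}$. The second term is $\ls\|\curl(\Eh-\hat\E_h)\|$ by stability. To avoid $C_{\E}$ and pollution here, this needs care. I would instead bound it via $\|\curl(\Eh-\hat\E_h)\|$ through the $\hat\a$ inf--sup and Galerkin orthogonality, $\hat\a(\Eh-\hat\E_h,\vh)=\hat\a(\E-\hat\E_h,\vh)-2\ka^2(\textsf{P}^\perp(\E-\Eh),\vh)$. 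This yields $\energy{\E-\hat\E_h}\ls\ka h(\ka^{-1}\|\curl\E\|_1+\|\E\|_1)$ plus $\ka\|\textsf{P}^\perp(\E-\Eh)\|\ls\ka^2h\Csol\,\ka h(\cdots)$, which is $\ls\ka h(\cdots)$ under $\ka^3h^2\Csol\le C_0$ combined with $\ka h\ls1$. Justifying this last absorption, i.e.\ $\ka^3h^2\Csol\ls1$ making $\ka^2h\Csol\cdot\ka h\ls\ka h$, fails as written, so I would instead use the curl-only part, noting that $\textsf{P}^{\perp}$-terms do not affect $\curl$.
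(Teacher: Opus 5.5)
\textbf{The gap.} Your argument bounds $(\Gh-\curl)\bm{\theta}_h$ by the stability estimate \eqref{eq:prop1}, i.e.\ by $\|\curl\bm{\theta}_h\|$. That quantity is not of higher order: it carries the whole pollution error.

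Since $\energy{\E-\PEp}\ls \ka h\,C_{\E}$ is pollution-free, a bound $\energy{\bm{\theta}_h}\ls \ka^4h^3\Csol C_{\E}$ would give $\energy{\E-\Eh}\ls(\ka h+\ka^4h^3\Csol)C_{\E}$. That would wipe out the $\ka^3h^2\Csol$ term and contradict the $\bO(\ka^3h^2)$ phase error of the EEM. So the sharpening of Lemma~\ref{lem:SerrEst} you aim for cannot hold in general.

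Your sketch does not deliver it technically either. The discrete piece $\hat{\a}(\Ph\w,\E-\hat{\E}_h)$, estimated with Lemma~\ref{lem:3.2} and \eqref{eq:superclosenessSEhE}, gives $\ka^2h^2C_{\E}\energy{\Ph\w}\ls \ka^2h^2\Csol C_{\E}\|\textsf{P}^{\perp}(\E-\Eh)\|$. This is exactly the old bound, with no gain. The piece $\hat{\a}(\w,\E-\hat{\E}_h)$ has a continuous $\w$ and is not covered by Lemma~\ref{lem:3.2} at all.

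The same issue defeats your proof of \eqref{eq:Ghsharp_main}. Stability only yields $(\ka h+\ka^3h^2\Csol)(\ka^{-1}\|\curl\E\|_1+\|\E\|_1)$. Your fallback remark that ``$\textsf{P}^{\perp}$-terms do not affect $\curl$'' is backwards: gradients are curl-free, while the divergence-free part carries the entire curl.

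\textbf{The missing idea.} $\bm{\theta}_h$ is not small, but it is the elliptic projection of a smooth field, and $\Gh$ reproduces curls of smooth fields. From the equations for $\Eh$ and $\PEp$ you get $\hat{\a}(\bm{\theta}_h,\bv_h)=-2\ka^2(\textsf{P}^{\perp}(\E-\Eh),\bv_h)$. Hence $\bm{\theta}_h=\Ph\bm{\theta}$, where $\bm{\theta}\in\V$ solves $\hat{\a}(\bm{\theta},\bv)=-2\ka^2(\textsf{P}^{\perp}(\E-\Eh),\bv)$ for all $\bv\in\V$. The right-hand side is divergence-free, so Lemma~\ref{lem:auxProbStab} gives $\ka\|\bm{\theta}\|_1+\|\bm{\theta}\|_{\bH^1(\curl)}\ls\ka^2\|\textsf{P}^{\perp}(\E-\Eh)\|$.

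Now write $(\Gh-\curl)\Ph\bm{\theta}=\Gh(\Ph\bm{\theta}-\pih\bm{\theta})+(\Gh\pih\bm{\theta}-\curl\bm{\theta})+\curl(\bm{\theta}-\Ph\bm{\theta})$. Use stability only on the first term, Theorem~\ref{thm:GhPiuerr} with $i=1$ on the second, and Lemma~\ref{lem:Pherror} with \eqref{eq:interpEn} on the first and third. This gives $\|(\Gh-\curl)\bm{\theta}_h\|\ls h(\ka\|\bm{\theta}\|_1+\|\bm{\theta}\|_{\bH^1(\curl)})\ls\ka^2h\|\textsf{P}^{\perp}(\E-\Eh)\|$, a factor $\ka h$ better than the stability bound $\ka\|\textsf{P}^{\perp}(\E-\Eh)\|$.

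Lemma~\ref{lem:SerrEst} then turns this into $\ka^3h^2\Csol\,h(\|\curl\E\|_1+\ka\|\E\|_1)$. This is $\ls\ka^4h^3\Csol C_{\E}$, which together with your (correct) treatment of $\Gh\PEp-\curl\PEp$ and the reverse triangle inequality gives \eqref{eq:Ghsharp_refined}. Under $\ka^3h^2\Csol\le C_0$ it is also $\ls h(\|\curl\E\|_1+\ka\|\E\|_1)$, which with $\|\curl(\E-\PEp)\|\ls h(\|\curl\E\|_1+\ka\|\E\|_1)$ yields \eqref{eq:Ghsharp_main}. This is precisely the mechanism by which the recovery leaves the pollution untouched: the polluted component is smooth and is reproduced by $\Gh$.
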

	
	\begin{proof} 
		Let ${\bm{\theta}}_h =\Eh- \PEp$. Then we have $  \Eh = \PEp  + {\bm{\theta}}_h$. Using the stability of $\Gh$ we have
		\begin{align}
			& \| \Gh \Eh - \curl \Eh - (\curl \E - \curl \PEp  ) \|     \notag \\
			= & \| \Gh (\PEp   + {\bm{\theta}}_h) - \curl (\PEp + {\bm{\theta}}_h) - \curl \E + \curl \PEp \|    \notag \\
			\leq & \| \Gh (\PEp   - \hat{\E}_h) \| + \| \Gh \hat{\E}_h - \curl \E \| + \| \Gh {\bm{\theta}}_h - \curl {\bm{\theta}}_h \|  \notag \\
			\ls &\| \curl (\PEp  - \hat{\E}_h) \| + \| \Gh \hat{\E}_h - \curl \E \| + \| \Gh {\bm{\theta}}_h - \curl {\bm{\theta}}_h \|.   
			\label{eq:differ}
		\end{align}
		By  {Lemma~\ref{lem:Pherror},  \eqref{eq:interpEn},} and Theorem~\ref{thm:GhPiuerr},
		\begin{align}
			\|\curl(\PEp - \hat{\E}_h)\| & \ls \|\curl(\PEp - \E)\|  + \|\curl(\E - \hat{\E}_h)\|   \ls h  ( {\|\curl\E\|_1} + \ka \|\E\|_1), \label{eq:proj_error} \\
			\|\Gh \hat{\E}_h - \curl \E\| &\ls h   {\|\curl\E\|_1}. \label{eq:curl_rec_error}
		\end{align}
		
		It remains to estimate $\|\Gh  {{\bm{\theta}}}_h - \curl  {\bm{\theta}}_h\|$.
		Note that $\Eh$ and  $\PEp$  satisfy 
		\eqn{
			\hat{\a}  (\Eh, \bv_h) = (\f + 2\ka^2   \textsf{P}^{\perp} \Eh, \bv_h)
		}
		and 
		\eqn{
			\hat{\a}  (\PEp, \bv_h) = (\f + 2\ka^2    \textsf{P}^{\perp}\E, \bv_h),
		}
		then we have 
		\eqn{
			\hat{\a}  ({\bm{\theta}}_h, \bv_h) = - 2\ka^2(   \textsf{P}^{\perp}(\E- \Eh), \bv_h)
		}
		Let   ${\bm{\theta}} \in  {\bH^1(\curl;\Omega)}$  satisfy:
		\begin{equation}
			\left\{\begin{array}{rlrl}
				\curl \curl {\bm{\theta}} - \ka^2 {\bm{\theta}} +2\ka^2 \textsf{P}^{\perp}{\bm{\theta}} &= -2\ka^2   	\textsf{P}^{\perp}(\E - \Eh) \\
				\qquad \qquad \quad \bm{\theta}  \times \n &= \bm{0}
			\end{array}\right. 
			\label{eq:error_pde}
		\end{equation}
		which implies 
		\eqn{
			\hat{\a}  ( {{\bm{\theta}}}, \bv_h) = -2\ka^2 \big(   \textsf{P}^{\perp}(\E - \Eh), \bv_h \big)
		}
		and ${\bm{\theta}}_h = \Ph  {\bm{\theta}}$.
		Using the stability estimate of \eqref{eq:error_pde}, we have $ \ka \| {\bm{\theta}} \|_{1}+ \| {\bm{\theta}} \|_{\bH^1(\curl)} \ls \ka^2 \|   \textsf{P}^{\perp}(\E -\Eh)\|$.  By \eqref{eq:prop1},   \eqref{eq:Ghinterp_error}, \eqref{eq:PhEnerEst},  {Lemma~\ref{lem:SerrEst} and \eqref{eq:interpEn},} we have
		\begin{align}
			\|\Gh  {{\bm{\theta}}}_h - \curl  {{\bm{\theta}}}_h\| &= \|\Gh \Ph  {\bm{\theta}} - \curl \Ph  {\bm{\theta}}\| \nonumber \\
			&\leq \|\curl (\Ph  {\bm{\theta}} - \pi_h {\bm{\theta}})\| + \|\Gh \pi_h {\bm{\theta}} - \curl {\bm{\theta}}\| + \|\curl {\bm{\theta}} - \curl \Ph  {\bm{\theta}}\| \nonumber \\
			&\ls \ka h \| {\bm{\theta}} \|_{1}+ h \| {\bm{\theta}} \|_{\bH^1(\curl)} \nonumber \\
			&\ls  \ka^2 h \|  \textsf{P}^{\perp}(\E - \Eh)\|    \nonumber \\
			&\ls  {\ka^3 h^2 \Csol }(h \|\curl \E \|_1 + \ka h \|\E \|_1). \label{eq:final_bound}
		\end{align}
		Then \eqref{eq:Ghsharp_main} can be obtained by the mesh condition, \eqref{eq:final_bound} and \eqref{eq:proj_error}--\eqref{eq:curl_rec_error}.
		
		Again  using   Lemma~\ref{lem:PEhatEerr} and Theorem~\ref{thm:GhPiuerr}, we have
		\begin{align*}
			\|\curl(\PEp - \hat{\E}_h)\| &\ls h^2( \|\E\|_3  + \ka\|\E\|_2 + \ka^2\|\E\|_1),   \\
			\|\Gh \hat{\E}_h - \curl \E\| &\ls h^2 \|\E\|_3,  
		\end{align*}
		which,  combined with \eqref{eq:final_bound} in \eqref{eq:differ}, proves \eqref{eq:Ghsharp_refined} (note that $\abs{\|A\|-\|B\|} \leq \|A-B\|  \;\; \forall A,B\in \bL^2(\Om)$). The proof is completed.
	\end{proof}
	
	\begin{remark}\label{rem:RecoverySharp}
		{\rm (a)}
		We can see from Theorem~\ref{thm:Ghsharp} that the PPR improves the interpolation error but not the pollution error. Thus \eqref{eq:superclosenessGhEhCurlE} is not an overestimate.
		
		{\rm (b)} By  noting that $\|\Gh \Eh - \curl \E \| \geq  \|\curl \Eh -\curl \E \| - \| \Gh \Eh - \curl \Eh \|$ and  $\| \Gh \Eh - \curl \Eh \| \ls h( \|\E\|_2 + \ka \|\E\|_1) $, we see that the pollution error in $\|\curl \Eh -\curl \E \|$ is inherited by $\|\Gh \Eh - \curl \E \|$.
		
		{\rm (c)} Unlike the PPR for coercive Maxwell problems, 
		\(\| \Gh \Eh - \curl \Eh \|\) is not a reliable estimate of the error 
		\(\| \curl \E - \curl \Eh \|\) when the pollution error is significant. 
		From \eqref{eq:Ghsharp_refined}, it can be seen that 
		\(\| \Gh \Eh - \curl \Eh \|\) is in fact an asymptotically exact estimator of 
		\(\| \curl \E - \curl \PEp \|\).
	\end{remark}

	\section{The PPR function value recovery }
	\label{sc4}
	In addition to recovering the curl, it is also of interest to recover the vector field itself and
	thereby improve the convergence rate in the $\bL^2$ norm. This is particularly relevant for the
	lowest-order first-type edge element method, for which the $\bL^2$-error typically converges only
	with first order. Once an $\bL^2$-superconvergent recovered field is available, it can be further
	combined with the curl recovery to derive superconvergence results for the energy norm error.
	
	\subsection{Construction of the function value recovery operator \texorpdfstring{$\Fh$}{\Fh}}
	\label{subscFh}
	
	In this subsection, we introduce a companion recovery operator $\Fh$ that reconstructs a
	higher-order vector field from the edge data. The construction parallels that of $\Gh$ and is
	performed locally, element by element, via least-squares fitting on patches.
	
	\begin{enumerate}
		\item Let $K\in\Th$ be any cuboid element. Associate with $K$ the local patch $W_K\subset\Omega$,
		defined as the $2\times2\times2$ block of elements consisting of $8$ neighboring elements
		(including $K$).  For any $\bv_h\in\Nh$, we determine a vector-valued polynomial $\bp_h$ on $W_K$
		by the same least-squares fitting procedure as in \eqref{eq:ph}--\eqref{eq:LSQ_patch}.
		
		\item The recovered field on $K$ is then defined by restriction:
		\begin{equation}\label{eq:Fh_def}
			\Fh \bv_h|_K := \bp_h|_K .
		\end{equation}
		
		\item Repeating this procedure for every $K\in\Th$ yields the global recovered field $\Fh \bv_h$.
	\end{enumerate}
	
	By construction, the two recovery operators are linked through
	\begin{equation}\label{eq:Gh_equals_curlFh}
		\Gh \bv_h|_K = \curl\bigl(\Fh \bv_h\bigr)|_K  \qquad \forall K \in\Th,
	\end{equation}
	that is, $\Gh \bv_h$ can be obtained by applying the curl to the recovered field $\Fh\bv_h$ elementwise.

	The following theorem states that the operator $\Fh$ inherits two fundamental properties from the local least-squares construction. 	The proof follows the same line of argument as that of Lemma~\ref{lem:curl-affine-combination} and Theorem~\ref{thm:GhProperty}, and is much simpler.    We omit the details.

	\begin{theorem}\label{thm:FhProperty}
		The function value recovery operator $\Fh$ enjoys the following  properties:  
		\begin{enumerate}[(i)]
			\item \textbf{$\bL^2$-stability.}
			For all $\bv_h\in\Nh$, we have
			\eq{\label{eq:Fh_stability_local}
				\|\Fh \bv_h\|_{\bL^2(K)} &\ls\|\bv_h\|_{\bL^2(W_K)} \quad  \forall K \in \Th,\\
				\label{eq:Fh_stability_global}
				\|\Fh \bv_h\| &\ls \,\|\bv_h\|.
			}
			
			\item \textbf{Polynomial preserving.}
			If $\bp_h  \in Q_{1,2,2}\times Q_{2,1,2}\times Q_{2,2,1}(W_K)$,
			then  the recovered field reproduces $\bp_h$ exactly:
			\begin{equation}\label{eq:Fh_poly_preserving}
				\Fh \pi_h \bp_h|_K = \bp_h|_K.
			\end{equation}
		\end{enumerate}
	\end{theorem}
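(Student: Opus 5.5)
The plan is to treat the $\bL^2$-stability as a scaling argument on a reference patch, and the polynomial-preserving property as uniqueness of the least-squares solution, in the same way as for Theorem~\ref{thm:GhProperty}.

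\emph{Stability.} Fix $K$ and its patch $W_K$. Map $W_K$ to $\widehat W=[-1,1]^3$ by the diagonal affine map $F$ of Lemma~\ref{lem:curl-affine-combination}. By \eqref{eq:threeLinear}, each component satisfies
\[
\bc^{(i)}=\big((A^{(i)})^{\top}A^{(i)}\big)^{-1}(A^{(i)})^{\top}\mathbf U^{(i)},
\qquad A^{(i)}=h_{i^\sharp}A_0 .
\]
Since $\rank A_0=18$, this gives $\bc^{(i)}=h_{i^\sharp}^{-1}A_0^{+}\mathbf U^{(i)}$, where the pseudo-inverse $A_0^{+}$ is a fixed matrix independent of $h$. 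The data are edge moments $\mathbf U^{(i)}_j=\int_{e_j}\bv_h^{(i)}\,\dd s$, so
\[
|\mathbf U^{(i)}_j|\le h_{i^\sharp}\,\|\bv_h^{(i)}\|_{L^\infty(W_K)} .
\]
Because the basis functions $\bQ^{(i)}$ are bounded by $1$ on $W_K$, we get
\[
\|\bp_h^{(i)}\|_{L^\infty(K)}\ls|\bc^{(i)}|\ls\|\bv_h\|_{L^\infty(W_K)} .
\]
Next apply the inverse estimate $\|\bv_h\|_{L^\infty(W_K)}\ls h^{-3/2}\|\bv_h\|_{\bL^2(W_K)}$. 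This holds because $\bv_h|_{K'}$ lies in a finite-dimensional polynomial space on each of the eight shape-regular cubes $K'\subset W_K$. Combined with $\|\bp_h\|_{\bL^2(K)}\ls h^{3/2}\|\bp_h\|_{L^\infty(K)}$, this yields \eqref{eq:Fh_stability_local}. Summing over $K\in\Th$ and using that each element belongs to at most eight patches gives \eqref{eq:Fh_stability_global}.

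The only point requiring care is the uniform shape-regularity of the scaling. The ratios $h_x:h_y:h_z$ are bounded, so $h_{i^\sharp}\simeq h$ and all constants depend only on $A_0^{+}$. This is where the diagonal structure $A^{(i)}=h_{i^\sharp}A_0$ established above is essential. Unlike the curl case, no affine-combination representation (Lemma~\ref{lem:curl-affine-combination}) is needed, which is why this proof is simpler.

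\emph{Polynomial preserving.} Let $\bp_h\in Q_{1,2,2}\times Q_{2,1,2}\times Q_{2,2,1}(W_K)$. By definition of the interpolant, $m_e(\pi_h\bp_h)=m_e(\bp_h)$ for every $e\in\mathcal E(W_K)$. Hence the functional $J$ in \eqref{eq:LSQ_patch} with data $\pi_h\bp_h$ vanishes at $\bq=\bp_h$, so $\bp_h$ is a minimizer. Since $\rank A_0=18$, the minimizer is unique, so the fitted polynomial is $\bp_h$ itself. Restricting to $K$ via \eqref{eq:Fh_def} gives \eqref{eq:Fh_poly_preserving}.
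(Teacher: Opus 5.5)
Your proposal is correct and follows essentially the route the paper has in mind. The paper omits this proof, saying it mirrors Lemma~\ref{lem:curl-affine-combination} and Theorem~\ref{thm:GhProperty}: express the fitted polynomial through the fixed matrix $A_0^{-1}$ acting on the edge data (here $A_0$ is square, so your $A_0^{+}$ is just $A_0^{-1}$), then use an $L^\infty$ bound, the inverse inequality and bounded overlap, with polynomial preservation following from uniqueness of the least-squares solution. The only slip is cosmetic: how many patches contain a given element depends on how $W_K$ is chosen near the boundary (at most $27$ in general, not necessarily $8$), but the argument only uses that this number is uniformly bounded.
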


	\subsection{Superconvergence estimates for the function value recovery operator}
	\label{subscFhSuper}
	
	This subsection summarizes superconvergence-type estimates for the function value recovery operator $\Fh$
	defined in Section~\ref{subscFh}. These estimates are  analogues of those established for
	the curl recovery operator $\Gh$. They rely on the $\bL^2$-stability \eqref{eq:Fh_stability_global}
	and the polynomial preserving property \eqref{eq:Fh_poly_preserving}, together with standard
	interpolation estimates and the uniformly bounded overlap of the patches $\{W_K\}_{K\in\Th}$.
	Since the derivations follow the same proof strategy as in the curl recovery analysis, we present
	the estimates in theorem/lemma form and postpone all proofs to the end of this subsection.

	The following two lemmas are the field recovery counterparts of Theorem~\ref{thm:GhPiuerr} and Lemma~\ref{lem:GhPepErr}, respectively.

	\begin{lemma}[Recovery  estimate for the interpolant]\label{lem:Fh_interp}
		Let $K\in\Th$ and let $W_K$ be the associated $2\times2\times2$ patch.
		If   $\bu\in\bH^2(\Omega)$, we have  
		\begin{equation}\label{eq:Fh_global_interp}
			\|\Fh \pi_h \bu - \bu\|
			\;\ls\; h^2 \|\bu\|_{2}.
		\end{equation}
	\end{lemma}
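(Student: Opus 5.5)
We argue element by element, with a Bramble--Hilbert type argument on the patch $W_K$, following the same pattern as the proof of Theorem~\ref{thm:GhPiuerr}. Fix $K\in\Th$ and let $\Pi_h^2$ be the second-order edge element interpolation operator on the macroelement $W_K$, as in that proof. Its range is contained in $Q_{1,2,2}\times Q_{2,1,2}\times Q_{2,2,1}(W_K)$. Since $\pi_h$ is linear, we split
\[
\Fh\pi_h\bu-\bu=\Fh\pi_h(\bu-\Pi_h^2\bu)+\bigl(\Fh\pi_h\Pi_h^2\bu-\Pi_h^2\bu\bigr)+(\Pi_h^2\bu-\bu)\quad\text{on }K.
\]
By the polynomial preserving property \eqref{eq:Fh_poly_preserving}, the middle term vanishes on $K$. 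By \eqref{eq:ErrPih2}, the last term satisfies $\|\Pi_h^2\bu-\bu\|_{\bL^2(K)}\ls h^2|\bu|_{\bH^2(W_K)}$.

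For the first term we apply the local $\bL^2$-stability \eqref{eq:Fh_stability_local}. This reduces it to bounding $\|\pi_h\bw\|_{\bL^2(W_K)}$ with $\bw:=\bu-\Pi_h^2\bu$. This is the only delicate step, because $\pi_h$ is not $\bL^2$-stable. Elementwise, the standard estimate $\|\bw-\pi_h\bw\|_{\bL^2(K')}\ls h|\bw|_{\bH^1(K')}$ for $K'\subset W_K$ holds for $\bw\in\bH^1(K')$. It follows from a scaling argument on the reference cube, using that edge moments are bounded by traces of $\bH^1$ functions... For lowest-order first-type cuboid Nédélec elements the edge moment of an $\bH^1$ field is not bounded in general, though. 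Rather than rely on a bound of that kind, we therefore use the $\bH^2$ version, which is valid since $\bw\in\bH^2(K')$ with $\bH^2\hookrightarrow C^0$ in 3D:
\[
\|\pi_h\bw\|_{\bL^2(K')}\ls \|\bw\|_{\bL^2(K')}+h|\bw|_{\bH^1(K')}+h^2|\bw|_{\bH^2(K')}.
\]
This comes from scaling to the reference element together with $|m_e(\hat\bw)|\ls\|\hat\bw\|_{\bH^2(\hat K)}$. Combined with \eqref{eq:ErrPih2}, the three terms on the right are each $\ls h^2|\bu|_{\bH^2(W_K)}$.

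Putting the pieces together gives the local bound $\|\Fh\pi_h\bu-\bu\|_{\bL^2(K)}\ls h^2|\bu|_{\bH^2(W_K)}$. Squaring, summing over $K\in\Th$, and using the uniformly bounded overlap of the patches $\{W_K\}$ then yields \eqref{eq:Fh_global_interp}.

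The main obstacle is the estimate for $\pi_h$ applied to the remainder $\bw$, namely controlling the edge moments through the $\bH^2$ norm and getting the correct $h$-scaling. We handle it by mapping each $K'$ to the reference cube with the diagonal affine map: the edge moments scale like $h$, the basis functions are $\bO(h^{-1})$, and this reproduces the displayed estimate.
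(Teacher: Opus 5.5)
Your proposal is correct and follows essentially the same argument as the paper: the same splitting through $\Pi_h^2$ on $W_K$, with the middle term removed by the polynomial preserving property of $\Fh$, followed by its local $\bL^2$-stability, the estimate \eqref{eq:ErrPih2}, and the bounded overlap of the patches. The only minor difference is how you bound $\|\pi_h(\bu-\Pi_h^2\bu)\|_{\bL^2(W_K)}$: you use a direct reference-cube scaling with the edge moments controlled by the $\bH^2$ norm, whereas the paper adds and subtracts $\bu-\Pi_h^2\bu$ and invokes \eqref{eq:interpL2}; both give the same $h^2\|\bu\|_{\bH^2(W_K)}$ bound.
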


	\begin{lemma}[Recovery  estimate for the elliptic projection]\label{lem:Fh_PEp}
		There exists a constant $\tilde C_0>0$ such that, if
		$\ka h\le \tilde C_0$, then
		\begin{equation}\label{eq:Fh_PEpErr}
			\ka  \|\Fh \PEp - \E\|
			\;\ls\;  \ka^2 h^2  \, C_{\E}.
		\end{equation}
	\end{lemma}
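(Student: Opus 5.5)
The plan mirrors the proof of Lemma~\ref{lem:GhPepErr}: compare $\Fh\PEp$ with $\Fh\hat{\E}_h$, where $\hat{\E}_h=\pi_h\E$, and then use the interpolant recovery estimate. By the triangle inequality and the $\bL^2$-stability \eqref{eq:Fh_stability_global} of $\Fh$,
\begin{align*}
\|\Fh \PEp - \E\| \le \|\Fh(\PEp-\hat{\E}_h)\| + \|\Fh\hat{\E}_h - \E\|
\ls \|\PEp-\hat{\E}_h\| + \|\Fh \pi_h\E - \E\|.
\end{align*}

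For the second term, Lemma~\ref{lem:Fh_interp} gives $\|\Fh\pi_h\E-\E\|\ls h^2\|\E\|_2$. After multiplying by $\ka$ this becomes $\ka h^2\|\E\|_2$. Since $\ka^{-1}\|\E\|_2\le C_{\E}$, it is bounded by $\ka^2h^2C_{\E}$.

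For the first term, I use the energy-norm supercloseness of Lemma~\ref{lem:PEhatEerr}, valid for $\ka h\le\tilde C_0$. Because $\ka\|\bv\|\le\energy{\bv}$,
\begin{align*}
\ka\|\PEp-\hat{\E}_h\| \le \energy{\PEp-\hat{\E}_h} \ls h^2\bigl(\|\E\|_3+\ka\|\E\|_2+\ka^2\|\E\|_1\bigr) = \ka^2h^2 C_{\E}.
\end{align*}
Multiplying the triangle inequality by $\ka$ and combining the two bounds yields \eqref{eq:Fh_PEpErr}.

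The only substantive ingredients are already in hand: the $\bL^2$ supercloseness hidden in the $\ka^2\|\cdot\|^2$ part of the energy norm in Lemma~\ref{lem:PEhatEerr}, and the $\bL^2$-stability of $\Fh$. I therefore expect no real obstacle beyond checking that Lemma~\ref{lem:Fh_interp} applies, which needs $\E\in\bH^2(\Om)$ and holds under the standing assumption $\E\in\bH^3(\Om)$ implicit in $C_{\E}$.
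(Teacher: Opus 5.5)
Your proposal is correct and follows essentially the same route as the paper: the triangle inequality through $\Fh\hat{\E}_h$, the $\bL^2$-stability of $\Fh$, the bound $\ka\|\PEp-\hat{\E}_h\|\ls \ka^2h^2C_{\E}$ from Lemma~\ref{lem:PEhatEerr}, and the interpolant recovery estimate of Lemma~\ref{lem:Fh_interp} for the remaining term. Your bookkeeping of the powers of $\ka$ against $C_{\E}$ is also correct.
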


	Based on the interpolation recovery estimate in Lemma~\ref{lem:Fh_interp} and the elliptic projection recovery error bound in Lemma~\ref{lem:Fh_PEp}, we obtain the following supercloseness result.
	\begin{theorem}[Superconvergence of the recovered field]\label{thm:FhErr}
		There exists a constant $C_0>0$ such that, if 
		$\ka^3 h^2 \Csol \le C_0$, then the recovered field is superclose to the exact field:
		\begin{equation}\label{eq:Fh_supercloseness}
			\ka \,\|\Fh \Eh - \E\|
			\; \ls\; (\ka^2 h^2 + \ka^3 h^2 \Csol)\, C_{\E}.
		\end{equation}
	\end{theorem}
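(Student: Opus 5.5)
The argument mirrors the proof of Theorem~\ref{thm:GhErr}, with the curl replaced by the field itself and the stability property \eqref{eq:prop1} replaced by the $\bL^2$-stability \eqref{eq:Fh_stability_global}. With $\hat{\E}_h=\pih\E$, split the error by the triangle inequality:
\begin{equation*}
\ka\|\Fh\Eh-\E\| \le \ka\|\Fh(\Eh-\hat{\E}_h)\| + \ka\|\Fh\hat{\E}_h-\E\|
\ls \ka\|\Eh-\hat{\E}_h\| + \ka\|\Fh\pih\E-\E\|.
\end{equation*}
The first term is the supercloseness term and the second is the recovered interpolant.

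\textbf{Supercloseness term.} By the definition of the energy norm, $\ka\|\Eh-\hat{\E}_h\|\le\energy{\Eh-\pih\E}$. Theorem~\ref{thm:EhHEhclose} then bounds this by $\ka^3h^2\Csol C_{\E}$, under the mesh condition $\ka^3h^2\Csol\le C_0$.

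\textbf{Interpolant term.} Lemma~\ref{lem:Fh_interp} gives $\ka\|\Fh\pih\E-\E\|\ls\ka h^2\|\E\|_2$. Since $\|\E\|_2=\ka\cdot\ka^{-1}\|\E\|_2\le\ka C_{\E}$, this is at most $\ka^2h^2C_{\E}$.

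Adding the two bounds gives \eqref{eq:Fh_supercloseness}.

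An alternative route splits through $\PEp$ instead of $\hat{\E}_h$. One writes $\Eh=\PEp+\bm{\theta}_h$, applies Lemma~\ref{lem:Fh_PEp} to $\Fh\PEp-\E$, and controls $\ka\|\Fh\bm{\theta}_h\|\ls\energy{\bm{\theta}_h}\ls\ka^2h^2\Csol(\|\E\|_2+\ka\|\E\|_1)$ via \eqref{eq:SmErrL2}. This also yields the claim, since $\ka^2h^2\Csol\|\E\|_2\ls\ka^3h^2\Csol C_{\E}$.

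The proof of the theorem itself is routine. The substantive step is Lemma~\ref{lem:Fh_interp}, and the plan relies on it as stated. If it had to be proved, the argument would run as in Theorem~\ref{thm:GhPiuerr}. One inserts the second-order edge interpolant $\Pi_h^2\bu$ on $W_K$, whose components lie in the polynomial-preserved space $Q_{1,2,2}\times Q_{2,1,2}\times Q_{2,2,1}$. One then uses the polynomial preservation \eqref{eq:Fh_poly_preserving} and the local stability \eqref{eq:Fh_stability_local} to get $\|\Fh\pih(\bu-\Pi_h^2\bu)\|_{\bL^2(K)}\ls\|\pih(\bu-\Pi_h^2\bu)\|_{\bL^2(W_K)}$. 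The last step bounds $\pih$ on $\bu-\Pi_h^2\bu$ by $\|\bu-\Pi_h^2\bu\|+h|\bu-\Pi_h^2\bu|_1+h^2|\cdot|_2$, using \eqref{eq:ErrPih2}.

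The only point needing care is that last $\bL^2$-stability of $\pih$ on the low-regularity difference. It requires the full $\bH^2$ bound in \eqref{eq:ErrPih2}, not just the curl bound used for $\Gh$.
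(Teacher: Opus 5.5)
Your proof is correct and follows the paper's argument: you split through $\hat{\E}_h=\pih\E$, bound the first term using the $\bL^2$-stability of $\Fh$ together with Theorem~\ref{thm:EhHEhclose}, and bound the second term using Lemma~\ref{lem:Fh_interp}. Your sketch of Lemma~\ref{lem:Fh_interp} through $\Pi_h^2$ also matches the paper's proof of that lemma.
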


	\begin{proof}[Proof of Lemma~\ref{lem:Fh_interp}]
		We first prove the local estimate $\|\Fh \pi_h \bu - \bu\|_{\bL^2(K)}  \ls  h^2 \|\bu\|_{\bH^2(W_K)}$. It follows from the polynomial preserving property of $\Fh$,
		its $\bL^2$-stability on $W_K$, and standard local approximation estimates for the edge-element interpolation operators
		$\pi_h$ and $\Pi_h^2$ on the patch:
		\begin{align*}
			\| \Fh \pi_h \bu -   \bu \|_{\bL^2(K)} 
			&\leq \| \Fh \pi_h (\bu - \Pi_h^2  \bu) \|_{\bL^2(K)} + \|   \Pi_h^2  \bu -   \bu \|_{\bL^2(K)} \\
			& \ls  \|   \pi_h (\bu - \Pi_h^2  \bu) \|_{\bL^2(W_K)} + h^2 \|   \bu \|_{\bH^2(W_K)} \\
			& \ls  \|   \pi_h (\bu - \Pi_h^2  \bu) - (\bu - \Pi_h^2  \bu) \|_{\bL^2(W_K)} + \| (\bu - \Pi_h^2  \bu) \|_{\bL^2(W_K)}  + h^2 \|   \bu \|_{\bH^2(W_K)} \\
			& \ls  h^1 \|   (\bu - \Pi_h^2  \bu) \|_{\bH^1(W_K)}  + h^2 \|   (\bu - \Pi_h^2  \bu) \|_{\bH^2(W_K)} + h^2 \|   \bu \|_{\bH^2(W_K)} \\
			& \ls  h^2 \|   (\bu - \Pi_h^2  \bu) \|_{\bH^2(W_K)} + h^2 \|   \bu \|_{\bH^2(W_K)} \\
			& \ls  h^2 \|   \bu \|_{\bH^2(W_K)},
		\end{align*}
		where we have used  the interpolation error estimate \eqref{eq:interpL2} and \eqref{eq:ErrPih2}.
		The global bound \eqref{eq:Fh_global_interp} is obtained by summing the local estimates
		over all $K\in\Th$ and using the uniformly bounded overlap of the patches $\{W_K\}_{K\in\Th}$. This completes the proof.
	\end{proof}

	\begin{proof}[Proof of Lemma~\ref{lem:Fh_PEp} and Theorem~\ref{thm:FhErr}]
		Let $\bv_h$ denote either $\PEp$ or $\Eh$. By the triangle inequality, we decompose the error as
		\[
		\|\Fh \bv_h - \E\|
		\le \|\Fh(\bv_h - \hat{\E}_h)\| + \|\Fh \hat{\E}_h - \E\|.
		\]
		For the first term, we invoke the $\bL^2$-stability of $\Fh$ (cf.\ \eqref{eq:Fh_stability_global}) to obtain
		\[
		\|\Fh(\bv_h - \hat{\E}_h)\|
		\ls \|\bv_h - \hat{\E}_h\|.
		\]
		The quantity $\|\bv_h - \hat{\E}_h\|$ is estimated differently in the two cases:
		\begin{itemize}
			\item If $\bv_h = \PEp$,   Lemma~\ref{lem:PEhatEerr} implies that $\|\PEp-  \hat{\E}_h \| \ls  \ka h^2 C_{\E} $.
			\item If $\bv_h = \Eh$ and $\ka^3 h^2 \Csol \le C_0$,   Theorem~\ref{thm:EhHEhclose} guarantees that $\|\Eh-  \hat{\E}_h \| \ls  \ka^2 h^2 \Csol C_{\E}$.
		\end{itemize}
		For the second term, we apply Lemma~\ref{lem:Fh_interp} with $\bu = \E$ to obtain the approximation bound
		\[
		\|\Fh \hat{\E}_h - \E\| \ls h^2 \|\E\|_2.
		\]		
		Combining the above estimates and multiplying both sides by $\ka$ yields the desired bounds.  
	\end{proof}

	Estimate \eqref{eq:Fh_supercloseness} shows that $\Fh$ yields a second-order accurate approximation to the exact field in the $\bL^2$ norm. As in the case of curl recovery, the reconstruction improves the interpolation component of the error, whereas the pollution component persists. To further confirm that \eqref{eq:Fh_supercloseness} is sharp rather than an overestimate, we proceed in the spirit of Theorem~\ref{thm:Ghsharp} and characterize the discrepancy between the reconstruction correction and the projection error.
	
	\begin{theorem}\label{thm:Fhsharp}
		There exists a constant $C_0>0$ such that, if ${\ka}^3 h^2 \Csol \le C_0$, then
		\begin{equation}\label{eq:Fhsharp}
			\ka \big| \|\Fh \Eh - \Eh\| - \|\E - \PE\| \big|
			\ls  (\ka^2 h^2 + \ka^4 h^3 \Csol)\, C_{\E}.
		\end{equation}
	\end{theorem}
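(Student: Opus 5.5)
We mimic the proof of Theorem~\ref{thm:Ghsharp}, replacing $\Gh$ by $\Fh$ and the curl by the identity. Set $\bm{\theta}_h=\Eh-\PEp$. Since $\big|\|A\|-\|B\|\big|\le\|A-B\|$, it suffices to bound
\begin{align*}
\|\Fh\Eh-\Eh-(\E-\PEp)\|
&\le \|\Fh(\PEp-\hat{\E}_h)\|+\|\Fh\hat{\E}_h-\E\|+\|\Fh\bm{\theta}_h-\bm{\theta}_h\| \\
&\ls \|\PEp-\hat{\E}_h\|+\|\Fh\hat{\E}_h-\E\|+\|\Fh\bm{\theta}_h-\bm{\theta}_h\|.
\end{align*}
Here we have used linearity of $\Fh$ and the $\bL^2$-stability \eqref{eq:Fh_stability_global}. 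The first term is bounded by $\ka^{-1}\energy{\PEp-\hat{\E}_h}\ls\ka h^2C_{\E}$ via Lemma~\ref{lem:PEhatEerr}. The second term is bounded by $h^2\|\E\|_2\le\ka h^2C_{\E}$ via Lemma~\ref{lem:Fh_interp}. After multiplying by $\ka$, both contribute $\ka^2h^2C_{\E}$.

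For the third term we reuse the auxiliary field $\bm{\theta}$ from \eqref{eq:error_pde}. Recall that $\bm{\theta}_h=\Ph\bm{\theta}$ and that $\ka\|\bm{\theta}\|_1+\|\bm{\theta}\|_{\bH^1(\curl)}\ls\ka^2\|\textsf{P}^{\perp}(\E-\Eh)\|$. We then split
\[
\|\Fh\Ph\bm{\theta}-\Ph\bm{\theta}\|\le\|\Fh(\Ph\bm{\theta}-\pi_h\bm{\theta})\|+\|\Fh\pi_h\bm{\theta}-\bm{\theta}\|+\|\bm{\theta}-\Ph\bm{\theta}\|.
\]
By stability, the first piece is controlled by $\|\Ph\bm{\theta}-\pi_h\bm{\theta}\|\le\ka^{-1}\big(\energy{\bm{\theta}-\Ph\bm{\theta}}+\energy{\bm{\theta}-\pi_h\bm{\theta}}\big)$. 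Lemma~\ref{lem:Pherror} and \eqref{eq:interpEn} bound this by $\ka^{-1}h(\|\curl\bm{\theta}\|_1+\ka|\bm{\theta}|_1)$, and the third piece obeys the same bound. For the middle piece we need only a first-order $\bL^2$ estimate $\|\Fh\pi_h\bu-\bu\|\ls h|\bu|_1+h^2|\curl\bu|_1$. This is obtained exactly as in the proof of Lemma~\ref{lem:Fh_interp}, with the lowest-order (or $L^2$-projected constant) interpolant on $W_K$ replacing $\Pi_h^2$, using polynomial preservation of constants and \eqref{eq:interpL2}. Combining the three pieces gives
\[
\|\Fh\bm{\theta}_h-\bm{\theta}_h\|\ls\ka^{-1}h\big(\|\bm{\theta}\|_{\bH^1(\curl)}+\ka\|\bm{\theta}\|_1\big)\ls\ka h\|\textsf{P}^{\perp}(\E-\Eh)\|.
\]

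Lemma~\ref{lem:SerrEst} together with \eqref{eq:interpEn} gives $\|\textsf{P}^{\perp}(\E-\Eh)\|\ls\ka h\Csol\cdot h(\|\E\|_2+\ka\|\E\|_1)\ls\ka^2h^2\Csol C_{\E}$. Hence $\ka\|\Fh\bm{\theta}_h-\bm{\theta}_h\|\ls\ka^4h^3\Csol C_{\E}$. Adding the contributions yields \eqref{eq:Fhsharp}.

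The main obstacle is the third term. A crude bound $\|\bm{\theta}_h\|\ls\ka^{-1}\energy{\bm{\theta}_h}$ would only give the pollution order $\ka^3h^2\Csol$. The extra factor $\ka h$ comes from exploiting that $\bm{\theta}_h$ is the elliptic projection of the smooth field $\bm{\theta}$, so that $\Fh-I$ annihilates it to first order. The step I would check most carefully is the first-order estimate for $\|\Fh\pi_h\bm{\theta}-\bm{\theta}\|$ with $\bm{\theta}$ only in $\bH^1(\curl)$. If that proves delicate, the fallback is to combine the local stability \eqref{eq:Fh_stability_local} with preservation of constants on $W_K$ and the Bramble--Hilbert lemma.
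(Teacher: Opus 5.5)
Your proof is correct and follows the paper's argument. It uses the same three-term splitting, the same auxiliary field $\bm{\theta}$ with $\bm{\theta}_h=\Ph\bm{\theta}$, and the same reduction to $\ka h\|\textsf{P}^{\perp}(\E-\Eh)\|$, which is then fed into Lemma~\ref{lem:SerrEst}. The only deviation is in the middle piece $\|\Fh\pi_h\bm{\theta}-\bm{\theta}\|$. The paper bounds it by $h^2\|\bm{\theta}\|_2$ using the $\bH^2$-regularity \eqref{eq:auxProbReg2} and Lemma~\ref{lem:Fh_interp}. Your first-order estimate instead follows from preservation of constants, the local stability \eqref{eq:Fh_stability_local}, Poincar\'e's inequality on $W_K$ and \eqref{eq:interpL2}. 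It needs only $\bH^1(\curl)$-regularity of $\bm{\theta}$ and gives the same final order.
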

	
	\begin{proof}
		Let ${\bm{\theta}}_h = \Eh - \PE$. Then $\Eh = \PE + {\bm{\theta}}_h$. By the stability of $\Fh$, we have
		\eq{
			\| \Fh \Eh - \Eh - ( \E - \PE ) \|
			&= \| \Fh (\PE + {\bm{\theta}}_h) - (\PE + {\bm{\theta}}_h) - \E + \PE \| \notag \\
			&\le \| \Fh (\PE - \hat{\E}_h) \| + \| \Fh \hat{\E}_h - \E \| + \| \Fh {\bm{\theta}}_h - {\bm{\theta}}_h \| \notag \\
			& \ls  \| \PE - \hat{\E}_h \| + \| \Fh \hat{\E}_h - \E \| + \| \Fh {\bm{\theta}}_h - {\bm{\theta}}_h \|.
			\label{eq:differFh}
		}
		By Lemma~\ref{lem:PEhatEerr} and Lemma~\ref{lem:Fh_interp}, we obtain
		\eq{
			\| \PE - \hat{\E}_h \|
			& \ls  \ka^{-1} h^2 \big( \|\E\|_3 + \ka \|\E\|_2 + \ka^2 \|\E\|_1 \big), \label{eq:proj_errorFh} \\
			\| \Fh \hat{\E}_h - \E \|
			& \ls  h^2 \|\E\|_2. \label{eq:l2_rec_error}
		}
		
		It remains to estimate $\|\Fh {\bm{\theta}}_h - {\bm{\theta}}_h\|$. Proceeding as in \eqref{eq:error_pde}, we note that ${\bm{\theta}} = \textsf{P}^{\perp}{\bm{\theta}}$ satisfies
		\[
		\hat{\a}({\bm{\theta}}, \bv_h) = -2\ka^2 \big( \textsf{P}^{\perp}(\E - \Eh), \bv_h \big),
		\]
		and ${\bm{\theta}}_h = \Ph {\bm{\theta}}$. By Lemma~\ref{lem:auxProbStab}, we have
		\[
		\ka \| {\bm{\theta}} \|_{1} + \| {\bm{\theta}} \|_{2}
		\ls  \ka^2 \| \textsf{P}^{\perp}(\E - \Eh) \|.
		\]
		Using \eqref{eq:Fh_stability_global},  {Lemmas~\ref{lem:Pherror}  and \ref{lem:SerrEst}, \eqref{eq:interpEn}}, \eqref{eq:Fh_global_interp}  and Theorem~\ref{thm:preErr}, we derive
		\begin{align}
			\|\Fh {\bm{\theta}}_h - {\bm{\theta}}_h\|
			&= \|\Fh \Ph {\bm{\theta}} - \Ph {\bm{\theta}}\| \notag \\
			&\le \|\Fh (\Ph {\bm{\theta}} - \pi_h {\bm{\theta}})\|
			+ \|\Fh \pi_h {\bm{\theta}} - {\bm{\theta}}\|
			+ \|{\bm{\theta}} - \Ph {\bm{\theta}}\| \notag \\
			&\ls \| \Ph {\bm{\theta}} - \pi_h {\bm{\theta}}\|
			+ \|\Fh \pi_h {\bm{\theta}} - {\bm{\theta}}\|
			+ \|{\bm{\theta}} - \Ph {\bm{\theta}}\| \notag \\
			& \ls  h \|{\bm{\theta}}\|_{1}
			+ \ka^{-1} h \|{\bm{\theta}}\|_{\bH^1(\curl)}
			+ h^2 \|{\bm{\theta}}\|_{2} \notag \\
			& \ls  \ka h \|\textsf{P}^{\perp}(\E - \Eh)\| \notag \\ 
			& \ls   \ka^2 h^3 \Csol \big(  {\|\curl\E\|_1} + \ka \|\E\|_1 \big).
			\label{eq:final_boundFh}
		\end{align}		
		Combining \eqref{eq:final_boundFh} with \eqref{eq:proj_errorFh}--\eqref{eq:l2_rec_error} yields \eqref{eq:Fhsharp}. This completes the proof.
	\end{proof}

	\section{CIP-EEM and Numerical Results} \label{sc5}
	
	\subsection{Continuous interior penalty edge element method (CIP-EEM)}

	As indicated by Theorems~\ref{thm:Ghsharp} and \ref{thm:Fhsharp}, the PPR does not mitigate the pollution effect when
	the wave number $\ka$ is large. To reduce the pollution error, one possible approach is to incorporate a continuous interior
	penalty (CIP) stabilization into the edge element method. To formulate the CIP-EEM scheme, let $\mathcal{F}_h^I$ denote the set of interior faces, i.e., faces shared by two neighboring elements in the mesh $\mathcal{T}_h$.  
	For each $\fa \in \mathcal{F}_h^I$, associated with elements $K^-$ and $K^+$ and their outward unit normals $\n_{K^-}$ and $\n_{K^+}$, we define the jump of the tangential component of $\curl \bu$ across $\fa$ as
	\begin{equation}
		\jv{\curl \bu}_t :=
		\curl \bu|_{K^-} \times \n_{K^-} + 
		\curl \bu|_{K^+} \times \n_{K^+},
	\end{equation}
	and the jump of the normal component of $\bu$ as
	\begin{equation}
		\jv{\bu}_n :=
		\bu|_{K^-} \cdot \n_{K^-} + 
		\bu|_{K^+} \cdot \n_{K^+}.
	\end{equation}
	
	Next, we introduce the space
	\[
	\V_{\gamma} := \bH_0(\curl;\Omega) \cap \prod_{K \in \mathcal{T}_h} \bH^1(\curl; K),
	\]
	and, for $\bu,\bv \in \V_{\gamma}$, define the penalty  sesquilinear forms
	\begin{align*} 
		\mathcal{J}_1(\bu,\bv) &:=
		\sum_{\fa \in \mathcal{F}_h^I}
		\gamma_\fa^1 \, h_\fa \,
		\langle \jv{\curl \bu}_t, \jv{\curl \bv}_t \rangle_{\fa},   \\
		\mathcal{J}_2(\bu,\bv) &:=
		\sum_{\fa \in \mathcal{F}_h^I}
		\gamma_\fa^2 \, h_\fa \,
		\langle \jv{\bu}_n, \jv{\bv}_n \rangle_{\fa},
	\end{align*}
	where $\gamma_\fa^1, \gamma_\fa^2 \in \mathbb{C}$ are penalty parameters satisfying $\abs{\gamma_\fa^j}\ls 1$ ($j=1,2$), and $h_\fa$ denotes the maximal side length of the face $\fa$.  
	
	The  sesquilinear form for the CIP-EEM is then given by
	\begin{equation} \label{eq:CIP}
		\a_\gamma(\bu, \bv) :=
		\a(\bu, \bv) + \mathcal{J}_1(\bu,\bv)   -\ka^2 \mathcal{J}_2(\bu,\bv) 
		\qquad \forall \bu,\bv \in \V_{\gamma},
	\end{equation}
	where $\a(\cdot, \cdot)$ denotes the sesquilinear form of the standard EEM.  
	The CIP-EEM scheme   then reads: find $\bm{E}_h \in \V_h$ such that
	\begin{equation}
		\a_\gamma(\bm{E}_h, \bv_h) =
		(\bm{f}, \bv_h) 
		\qquad \forall \bv_h \in \V_h.
	\end{equation}
	
	\begin{remark}
		\leavevmode
		\begin{itemize}
			\item[(1)] If $\gamma_\fa^1=\gamma_\fa^2 \equiv 0$, then the CIP-EEM reduces to the standard EEM.
			
			\item[(2)] The CIP-EEM generalizes the continuous interior penalty finite element method (CIP-FEM) for elliptic and Helmholtz problems 
			\cite{douglas2008interior,burman2005,ZhuWu2013II,Wu2014Pre,DuWu2015} 
			to Maxwell's equations (see, e.g., \cite{luwu2026Preasymptotic,pplu2019}).  
			Compared to the CIP-EEM scheme in \cite{pplu2019,luwu2026Preasymptotic}, the additional penalty term $\mathcal{J}_2$ in \eqref{eq:CIP} is motivated by  a posteriori error estimators in \cite{ZhongAEEM2012,Chen2009AEEM,Chaumont2024PostDG}, which involve the jump of the normal component of discrete solutions across interior faces.
			
			\item[(3)] Allowing complex-valued penalty parameters has two advantages: real parts provide additional flexibility to control dispersion errors in the high-frequency regime, while imaginary parts help enhance stability (see, e.g., \cite{luwu2026Preasymptotic,pplu2019}).
		\end{itemize}
	\end{remark}

	\subsection{Dispersion analysis of the CIP-EEM}
	\label{subsec:cipdisp}
	
	In this subsection we take
	\[
	\gamma_\fa^1\equiv \gamma_{\mathrm t},\qquad
	\gamma_\fa^2\equiv \gamma_{\mathrm n},
	\]
	where $\gamma_{\mathrm t},\gamma_{\mathrm n}\in\R$. 
	We determine the two penalty parameters through dispersion analysis on a uniform
	Cartesian mesh  with equal element side lengths $h_x = h_y = h_z = h_0$.

	Consider the homogeneous time-harmonic Maxwell problem $\curl\curl \E - \ka^2 \E = \bm{0}$ in $\R^3$. It is well known that  it admits plane-wave
	solutions of the form $\bp \exp(\ii \ka \bd\cdot \bx)$ for some polarization vector $\bp\in\C^3$ and propagation direction $\bd=(d_1,d_2,d_3)\in\mathbb{S}^2$ satisfying
	the  condition $\bp\cdot\bd=0$.

	For the lowest-order edge element on a Cartesian mesh, there are three edge
	orientations in each cell. We choose three representative edges, parallel to
	the coordinate axes, and denote the corresponding edge degrees of freedom  of the discrete solution $\Eh$ by
	\[
	U=(U_1,U_2,U_3)^T\in\C^3 .
	\]
	Let  $ t:=\ka h_0$ and $\mu = t^2$.  The basic idea of dispersion analysis is  to study discrete solutions satisfying the   Bloch wave property (see, e.g. \cite{ainsworth2004dispersive,Zhou2022Dispersion}) 
	\begin{equation}
		\Eh(\bx + h_0 \bm{m} )	= \exp(\ii \ka h_0 \bd \cdot \bm{m}) \Eh(\bx) 	= \exp(\ii t \bd \cdot \bm{m}) \Eh(\bx), 
		\label{eq:cip-bloch-dof}
	\end{equation}
	where   $\bm{m}=(m_1,m_2,m_3)\in\mathbb{Z}^3$ is  a lattice translation. Since the Cartesian-grid stencil is translation invariant,  all edge degrees of freedom can be generated from  the three representative
	degrees of freedom $(U_1,U_2,U_3)$ through multiplication by the phase factors $
	\zeta_j=e^{\ii t d_j},\, j=1,2,3$.
	Hence it is enough to impose the equations on the
	three representative edges. This gives the $3\times3$ symbolic linear system
	\begin{equation}
		S_3(\mu_h,t;\bd,\gamma_{\mathrm t},\gamma_{\mathrm n})U= \bm{0},
		\label{eq:cip-symbol}
	\end{equation}
	where $ S_3(\mu_h,t;\bd,\gamma_{\mathrm t},\gamma_{\mathrm n})
	=
	K_3(t;\bd)
	+\gamma_{\mathrm t}P_{\mathrm t,3}(t;\bd)
	-\mu_h \bigl(M_3(t;\bd)+\gamma_{\mathrm n}P_{\mathrm n,3}(t;\bd)\bigr)$, 
	$K_3(t;\bd),M_3(t;\bd)$ are the reduced
	edge-element stiffness and mass symbolic matrices, and
	$P_{\mathrm t,3}(t;\bd),P_{\mathrm n,3}(t;\bd)$ are the reduced symbolic matrices of the
	two penalty terms, $\mu_h=t_h^2:=(\ka_h h_0)^2$, where $\ka_h$ denotes the discrete wave number. Hence $\mu_h$ approximates $\mu$ as $t\to 0^+$.
	
	Let
	\begin{equation}
		F(\mu_h,t;\bd,\gamma_{\mathrm t},\gamma_{\mathrm n})
		:=
		\det S_3(\mu_h,t;\bd,\gamma_{\mathrm t},\gamma_{\mathrm n}).
		\label{eq:cip-det}
	\end{equation}
	Then \eqref{eq:cip-symbol} admits a nontrivial solution  $U$  if $\mu_h\neq 0$ is a root of  $F(\mu_h,t;\bd,\gamma_{\mathrm t},\gamma_{\mathrm n}) = 0$.
	Since $S_3$ is affine in $\mu_h$, $F$ is cubic in $\mu_h$. By factoring the symbolic expression, we obtain
	\begin{equation}
		F(\mu_h,t;\bd,\gamma_{\mathrm t},\gamma_{\mathrm n})
		=
		\mu_h\,Q(\mu_h,t;\bd,\gamma_{\mathrm t},\gamma_{\mathrm n}),
		\label{eq:cip-factor}
	\end{equation}
	where
	\[
	Q(\mu_h,t;\bd,\gamma_{\mathrm t},\gamma_{\mathrm n})
	=
	q_2(t;\bd,\gamma_{\mathrm t},\gamma_{\mathrm n})\mu_h^2+q_1(t;\bd,\gamma_{\mathrm t},\gamma_{\mathrm n})\mu_h+q_0(t;\bd,\gamma_{\mathrm t},\gamma_{\mathrm n}).
	\]
	The factor $\mu_h$ gives the zero root. The two nonzero roots of $Q=0$ are
	denoted by $\mu_{h,+}(t;\bd,\gamma_{\mathrm t},\gamma_{\mathrm n})$ and $\mu_{h,-}(t;\bd,\gamma_{\mathrm t},\gamma_{\mathrm n})$.
	Correspondingly, we define $	t_{h,\pm}:=\sqrt{\mu_{h,\pm}}, \,	\ka_{h,\pm}:=t_{h,\pm}/{h_0}$. 
	For small $t$,   symbolic Taylor expansion yields
	\begin{align}
		\mu_{h,+}+\mu_{h,-} 
		&=  -\frac{q_1(t;\bd,\gamma_{\mathrm t},\gamma_{\mathrm n})}{q_2(t;\bd,\gamma_{\mathrm t},\gamma_{\mathrm n})}  =
		2t^2
		+s_4(\bd,\gamma_{\mathrm t},\gamma_{\mathrm n})t^4
		+\bO(t^6),
		\label{eq:cip-sum}
		\\
		\mu_{h,+}\mu_{h,-} 
		&= \frac{q_0(t;\bd,\gamma_{\mathrm t},\gamma_{\mathrm n})}{q_2(t;\bd,\gamma_{\mathrm t},\gamma_{\mathrm n})}   =
		t^4
		+p_6(\bd,\gamma_{\mathrm t},\gamma_{\mathrm n})t^6
		+\bO(t^8),
		\label{eq:cip-product}
		\\
		\bigl(\mu_{h,+} -\mu_{h,-} \bigr)^2
		&= \frac{q_1^2(t;\bd,\gamma_{\mathrm t},\gamma_{\mathrm n}) -4q_2(t;\bd,\gamma_{\mathrm t},\gamma_{\mathrm n})  q_0(t;\bd,\gamma_{\mathrm t},\gamma_{\mathrm n})  }{q_2^2(t;\bd,\gamma_{\mathrm t},\gamma_{\mathrm n})}  \nonumber \\
		& =
		r_8(\bd,\gamma_{\mathrm t},\gamma_{\mathrm n})t^8 
		+ r_{10}(\bd,\gamma_{\mathrm t},\gamma_{\mathrm n})t^{10}
		+\bO(t^{12}),
		\label{eq:cip-root-splitting}
	\end{align}
	where the  coefficients relevant to the fourth-order cancellation are
	\begin{align}
		s_4(\bd,\gamma_{\mathrm t},\gamma_{\mathrm n})
		&=
		\Bigl(\frac16+2\gamma_{\mathrm t}\Bigr)
		(d_1^4+d_2^4+d_3^4) 
		+2(\gamma_{\mathrm t}-\gamma_{\mathrm n})
		(d_1^2d_2^2+d_1^2d_3^2+d_2^2d_3^2),
		\label{eq:cip-s4}
		\\
		p_6(\bd,\gamma_{\mathrm t}, \gamma_{\mathrm n}) & = s_4(\bd,\gamma_{\mathrm t},\gamma_{\mathrm n}), \\
		r_8(\bd,\gamma_{\mathrm t},\gamma_{\mathrm n})
		&=
		4(\gamma_{\mathrm t}-\gamma_{\mathrm n})^2G(\bd),
		\label{eq:cip-r8}
	\end{align}
	where $G(\bd)\ge0$ on $\mathbb{S}^2$ and $G$ is positive for infinitely many propagation directions.
	
	In fact,  for the case $\gamma_{\mathrm t}=\gamma_{\mathrm n}=0$ (i.e., the standard EEM), we have $q_1^2 -4q_2 q_0 = 0 $ and hence  $\mu_{h,+}= \mu_{h,-} = t^2 +\bO(t^4)$. 	This means that $ 	t_{h,\pm}=t+\bO(t^3), \ka_{h,\pm}=\ka+\bO(\ka^3h_0^2)$. Therefore, the phase difference for the standard EEM satisfies   $|\ka - \ka_{h,\pm}|= \bO(\ka^3h_0^2)$,  which is consistent with the result reported in \cite{ainsworth2004dispersive}. 
	
	Next, we determine the optimal penalty parameters $\gamma_{\mathrm t}$ and $\gamma_{\mathrm n}$ by  {eliminating} the leading-order term of the error $|\mu-\mu_h|$ uniformly for all propagation directions $\bd\in\mathbb{S}^2$.   Write
	\[
	\mu_{h,\pm} 
	=
	t^2
	+c_\pm^{(4)}(\bd,\gamma_{\mathrm t},\gamma_{\mathrm n})t^4
	+\bO(t^6).
	\]
	Then
	\[
	c_+^{(4)}+c_-^{(4)}=s_4,\qquad
	\bigl(c_+^{(4)}-c_-^{(4)}\bigr)^2=r_8 .
	\]
	Therefore both fourth-order coefficients vanish for all propagation
	directions if  
	\[
	s_4(\bd,\gamma_{\mathrm t},\gamma_{\mathrm n}) = 
	r_8(\bd,\gamma_{\mathrm t},\gamma_{\mathrm n}) \equiv0
	\qquad
	\text{on }\mathbb{S}^2 .
	\]
	From \eqref{eq:cip-r8}, the  condition $r_{8}(\bd,\gamma_{\mathrm t},\gamma_{\mathrm n})\equiv0 $ gives
	$\gamma_{\mathrm t}=\gamma_{\mathrm n}$. Substituting this into
	\eqref{eq:cip-s4} yields
	\[
	s_4(\bd,\gamma_{\mathrm t},\gamma_{\mathrm t})
	=
	\Bigl(\frac16+2\gamma_{\mathrm t}\Bigr)
	(d_1^4+d_2^4+d_3^4).
	\]
	Since $d_1^4+d_2^4+d_3^4>0$ on $\mathbb{S}^2$, the first condition $s_{4}(\bd,\gamma_{\mathrm t},\gamma_{\mathrm n})\equiv0 $ gives
	\begin{equation}
		\gamma_{\mathrm t}=\gamma_{\mathrm n}=-\frac1{12}.
		\label{eq:cip-optimal-gamma}
	\end{equation}
	As a consistency check, it can be  verified that $
	r_{10}\Bigl(\bd,-\frac1{12},-\frac1{12}\Bigr)=0.$
	Moreover,  a symbolic Taylor expansion shows that the discriminant of the quadratic equation   $Q(\mu_h,t;\bd,-\frac1{12}, -\frac1{12})
	=  0$  is non-negative for all directions $\bd\in \mathbb{S}^2$ and   sufficiently small $t$. Hence, both roots are real  and we have 
	\begin{equation}
		\mu_{h,\pm} 
		=
		t^2+\bO(t^6)
		\qquad
		\text{for all }\bd\in\mathbb{S}^2.
		\label{eq:cip-lambda-optimal}
	\end{equation}
	Consequently, we have
	\begin{equation}
		t_{h,\pm}=t+\bO(t^5),
		\qquad
		\ka_{h,\pm}=\ka+\bO(\ka^5h_0^4).
		\label{eq:cip-optimal-phase-error}
	\end{equation}
	This shows that the choice
	$\gamma_{\mathrm t}=\gamma_{\mathrm n}=-1/12$  reduces  the phase difference  $|\ka - \ka_{h,\pm}|$
	from
	$\bO(\ka^3h_0^2)$  to $\bO(\ka^5h_0^4)$  for the two nonzero branches. It is well known that pollution errors are closely related to the phase difference between the exact and discrete solutions \cite{Ainsworth2004,ainsworth2004dispersive,Zhou2022Dispersion,monk2003,burman1D}, and we expect that this choice of the penalty parameter can also significantly reduce the pollution error of the EEM.  The corresponding numerical evidence will be presented in the next subsection.

	\subsection{Numerical experiments}
	
	We present a series of numerical experiments to assess the performance of the proposed PPR algorithm and the CIP-EEM method. The objectives are twofold: 
	(i) to investigate the reconstruction properties and superconvergence behavior of the PPR method (for both curl reconstruction and solution reconstruction), and 
	(ii) to verify that the CIP-EEM method with suitably chosen penalty parameters  can greatly reduce pollution errors.
	
	We consider the following model problem posed on the cubic domain $\Om = (1,2)^3$:
	\begin{align*}
		\curl \curl \E - \ka^{2} \E &= \f, \qquad \text{in } \Om, \\
		\E \times \n &= \g, \qquad \text{on } \Ga := \partial \Om,
	\end{align*}
	where the source term $\f$ and boundary data $\g$ are chosen such that the exact solution is given by
	\begin{equation}\label{eq:exact}
		\E = \ka \sum_{m=-1}^{1} h_1^{(1)}(\ka r)\,\nabla_{S}Y_1^m \times \hat{\br},
	\end{equation}
	where $h_1^{(1)}$ denotes the spherical Hankel function of the first kind of order one, $\nabla_S$ is the surface gradient operator (see~\cite{monk2003}), and $Y_1^m$ ($m=-1,0,1$) are the spherical harmonics on the unit sphere (see \cite{colton1998inverse}). Here $\br=(x,y,z)$, $r=|\br|$, and $\hat{\br}=\br/r$.
	
	In all experiments, we assume that $\ka^2$ is not an eigenvalue of the Maxwell operator, so that the problem admits a unique solution.
	
	We apply the proposed polynomial preserving recovery (PPR) to post-process either the curl or the numerical solution itself. We compare several discretization strategies: the standard edge element method (EEM), denoted by $\Eh^{\mathrm{EEM}}$, and the continuous interior penalty edge element method (CIP-EEM), denoted by $\Eh^{\mathrm{CIP}}$. The edge element interpolant of the exact solution is denoted by $\hat{\E}_h$. We also evaluate the errors of the corresponding PPR reconstructions.
	
	For each method, relative errors are computed in both the $\bH(\curl)$ seminorm and the $\bL^2$ norm. Particular attention is paid to the dependence of the errors on the wave number $\ka$, in order to illustrate the pollution effect and its mitigation.
	
	In the implementation, the domain $\Om$ is discretized by a uniform Cartesian mesh with element side lengths $h_x = h_y = h_z = h_0$. For the CIP-EEM method, the penalty parameters are chosen following the dispersion analysis  in Section~\ref{subsec:cipdisp}, with an additional small imaginary perturbation to enhance stability. Specifically, we take
	\begin{equation}\label{gamma_f}
		\gamma_{\fa}^1 = \gamma_{\fa}^2 = -\tfrac{1}{12} + 0.005\,\ii.
	\end{equation}

	\begin{figure}[hbtp]
		\centering
		\includegraphics[scale=0.62,trim={2cm 0 2cm 0}]{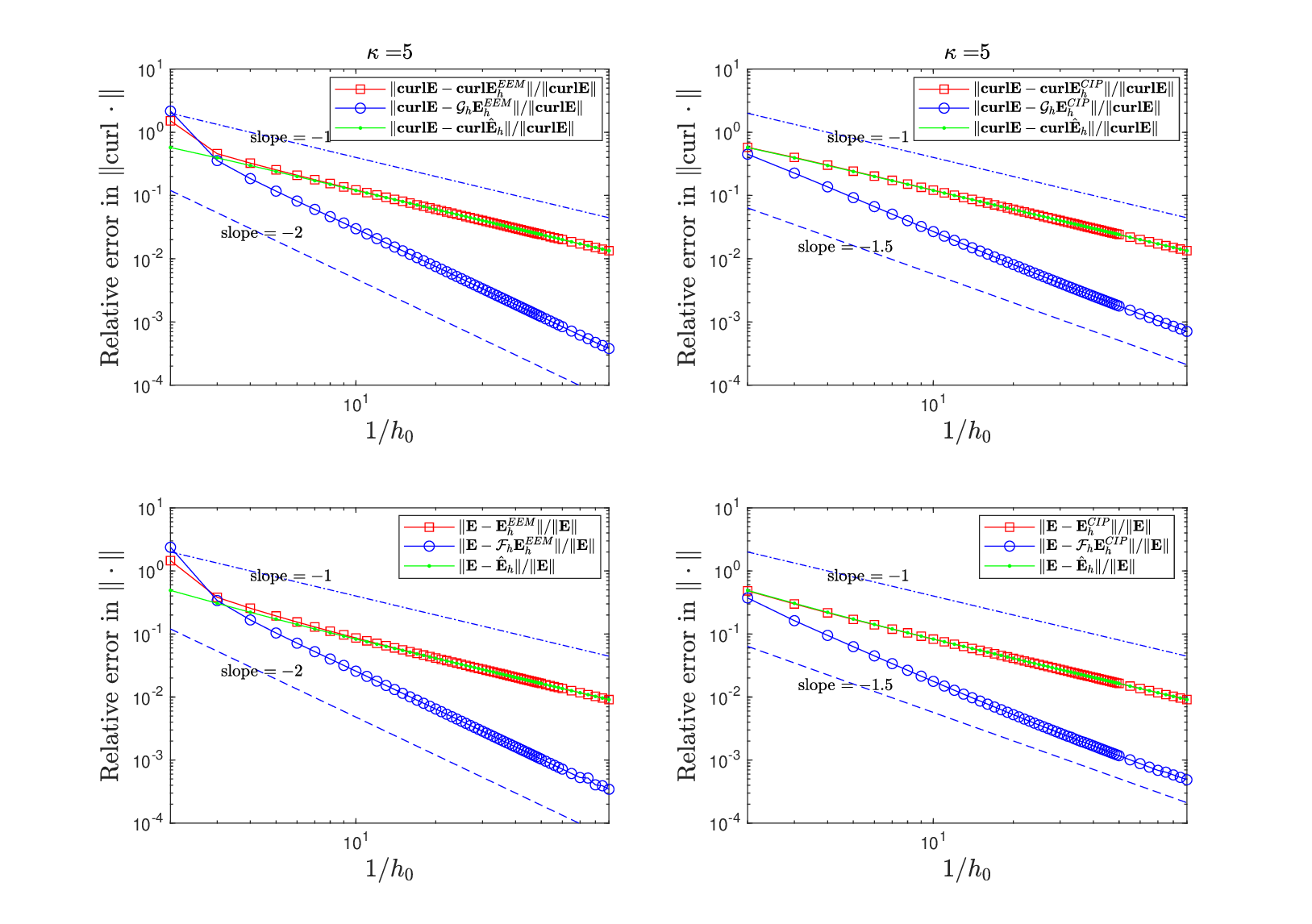}
		\caption{Log--log plots of relative errors versus $1/h_0$ for $\ka=5$. Top: $\bH(\curl)$ seminorm; bottom: $\bL^2$ norm. Left: EEM; right: CIP-EEM. Each plot includes the discrete solution error, the PPR reconstruction errors (curl or solution reconstruction), and the interpolation error.}
		\label{fig:fixK5}
	\end{figure}
	
	Figure~\ref{fig:fixK5} shows the relative errors for $\ka=5$ as the mesh is refined. In this low-frequency regime, EEM and CIP-EEM exhibit essentially identical behavior. The discrete solution errors are comparable to the interpolation errors and decay steadily under mesh refinement, indicating that pollution effects are negligible.
	
	For EEM, both the curl reconstruction error $\|\curl \E - \Gh \Eh\|/\|\curl \E\|$ and the solution reconstruction error $\|\E - \Fh \Eh\|/\|\E\|$ exhibit a convergence rate consistent with slope $-2$. Thus, even without penalization, the PPR post-processing achieves second-order superconvergence in both the $\bH(\curl)$ seminorm and the $\bL^2$ norm, in agreement with Theorems~\ref{thm:GhErr} and~\ref{thm:FhErr}. 
	We also observe that, in the asymptotic regime, penalization may slightly affect the superconvergence behavior of PPR.
	
	\begin{figure}[h]
		\centering
		\includegraphics[scale=0.62,trim={2cm 0 2cm 0}]{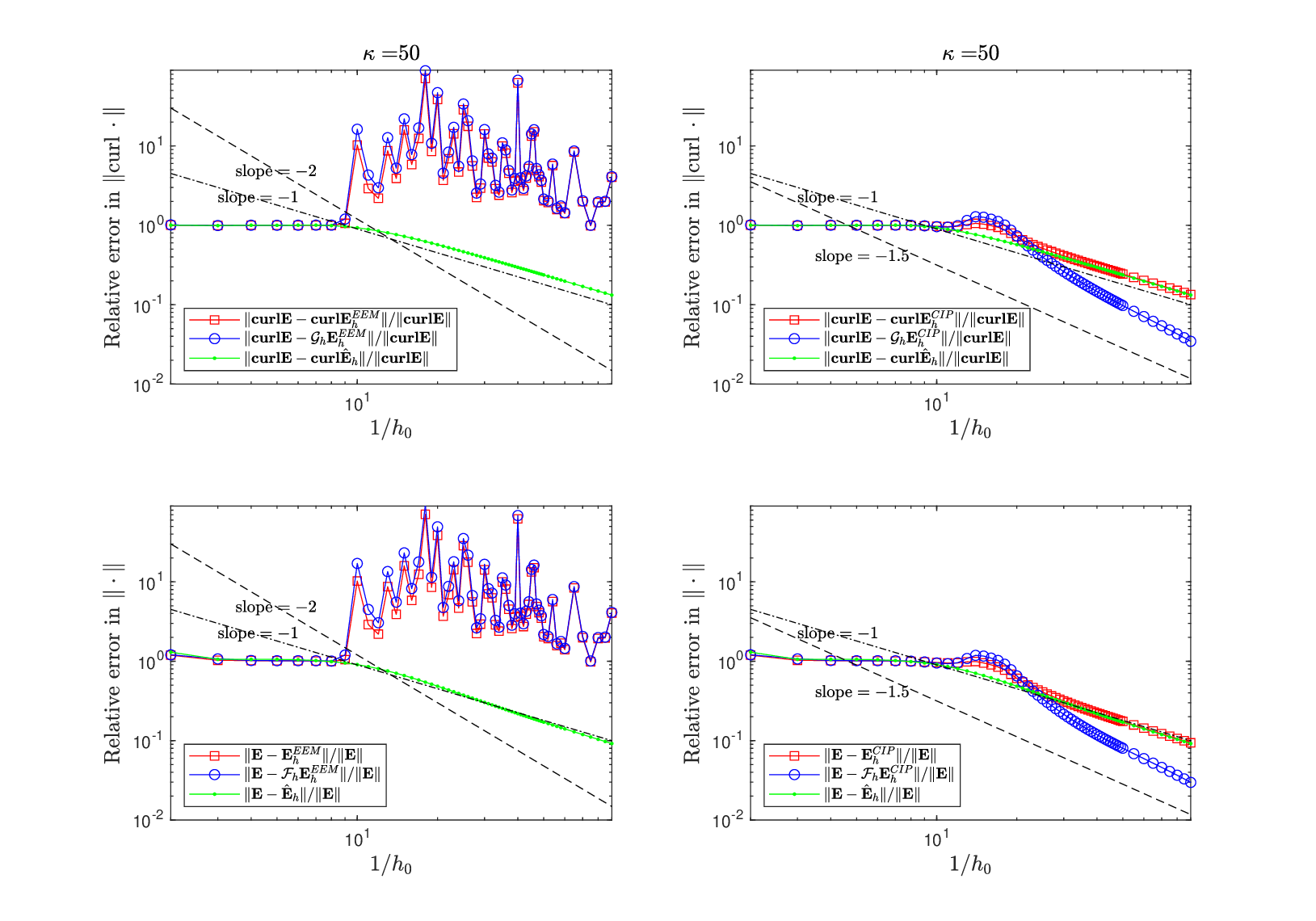}
		\caption{Log--log plots of relative errors versus $1/h_0$ for $\ka=50$. Top: $\bH(\curl)$ seminorm; bottom: $\bL^2$ norm. Left: EEM; right: CIP-EEM.}
		\label{fig:fixK50}
	\end{figure}
	
	In contrast, Figure~\ref{fig:fixK50} presents the results for $\ka=50$. In this high-frequency regime, the standard EEM becomes highly unstable: even on very fine meshes (e.g., $1/h_0=90$), the relative errors of the discrete solution and the reconstructed quantities remain at or above the $100\%$ level. This indicates that the dominant pollution error cannot be effectively compensated by PPR post-processing.
	
	By contrast, CIP-EEM remains stable throughout mesh refinement. Once the interpolation error enters the asymptotic regime, both the discrete solution error and the reconstruction errors associated with $\Gh$ and $\Fh$ decrease rapidly. Moreover, the reconstructed quantities exhibit convergence rates exceeding first order in both norms.
	
	These results demonstrate that the proposed CIP-EEM, with appropriately chosen complex penalty parameters, significantly enhances stability and effectively reduces pollution errors, thereby enabling the PPR reconstruction to retain its superconvergent behavior even at high frequencies.
	
	\begin{figure}[h]
		\centering
		\includegraphics[scale=0.5,trim={2cm 0 2cm 0}]{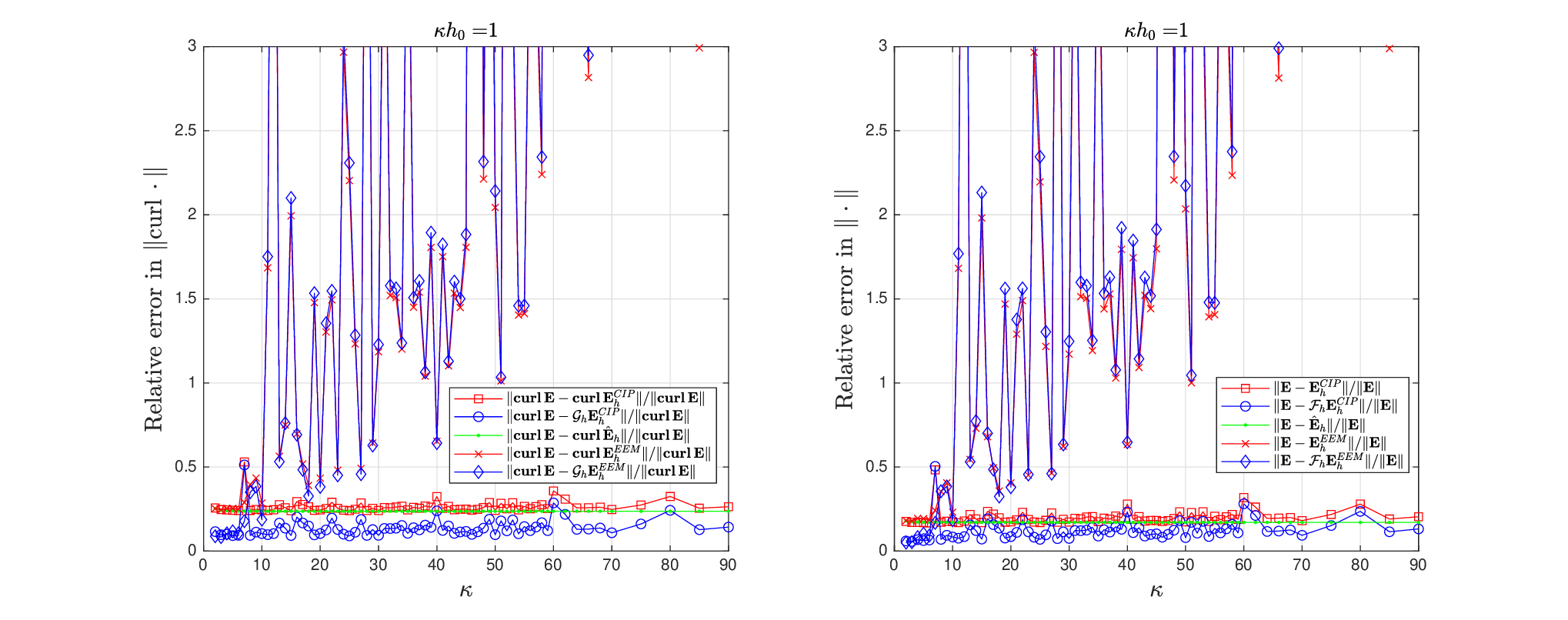}
		\caption{Relative errors versus wave number $\ka$ under the constraint $\ka h_0 = 1$. Left: $\bH(\curl)$ seminorm; right: $\bL^2$ norm.}
		\label{fig:ReEnerplot}
	\end{figure}
	
	Figure~\ref{fig:ReEnerplot} shows the dependence of the relative errors on the wave number $\ka$ under the mesh constraint $\ka h_0 = 1$. For small $\ka$, both EEM and CIP-EEM produce errors comparable to interpolation errors, indicating negligible pollution. However, as $\ka$ increases (notably for $\ka > 10$), the error of EEM deteriorates rapidly, and its PPR reconstruction fails to improve accuracy, confirming the strong influence of pollution and the lack of stability.
	
	In contrast, CIP-EEM remains stable across the entire range of tested wave numbers. Both the discrete solution error and the reconstructed errors via $\Gh$ and $\Fh$ remain uniformly small, staying below approximately $0.3$ even for $\ka=90$. This further confirms that CIP-EEM substantially mitigates pollution effects and that PPR post-processing provides reliable accuracy enhancement under this mesh constraint.
	
	Overall, the results indicate that, as $\ka$ increases, the standard EEM becomes unstable and highly sensitive to pollution errors, whereas CIP-EEM with a nonzero imaginary penalty parameter significantly improves stability, and the real part of the penalty reduces pollution. The PPR method achieves superconvergence in the low-frequency regime and remains effective in the asymptotic range, but its performance deteriorates at high frequencies due to pollution effects, as predicted by Theorems~\ref{thm:Ghsharp} and~\ref{thm:Fhsharp}. When combined with CIP-EEM, however, the overall method exhibits superior stability and accuracy, achieving the best performance among the tested approaches.

	
	\appendix
	
	\renewcommand{\theequation}{A.\arabic{equation}}
	\renewcommand{\thetheorem}{A.\arabic{theorem}}
	\renewcommand{\thelemma}{A.\arabic{lemma}}
	\renewcommand{\theremark}{A.\arabic{remark}}
	\renewcommand{\thefigure}{A.\arabic{figure}}
	\renewcommand{\thetable}{A.\arabic{table}}
	\setcounter{equation}{0}
	\setcounter{theorem}{0}
	\setcounter{lemma}{0}
	\setcounter{remark}{0}
	\setcounter{figure}{0}
	
	\section{}\label{sc:appendixA}
	\begingroup
	\allowdisplaybreaks	
	\subsection{Explicit form of \texorpdfstring{$A_0$}{A_0} } \label{sc:A0}  The matrix $A_0$ is an $18$ by $18$ matrix given as follows. 
	\vskip 3pt
	
	\resizebox{0.95\textwidth}{!}{
		$
		A_0=
		\begin{bmatrix}
			1 & -1 & 1 & -1 & 1 & -1 & 1 & -1 & 1 & -\tfrac{1}{2} & \tfrac{1}{2} & -\tfrac{1}{2} & \tfrac{1}{2} & -\tfrac{1}{2} & \tfrac{1}{2} & -\tfrac{1}{2} & \tfrac{1}{2} & -\tfrac{1}{2} \\
			1 & -1 & 1 & -1 & 1 & -1 & 1 & -1 & 1 & \tfrac{1}{2} & -\tfrac{1}{2} & \tfrac{1}{2} & -\tfrac{1}{2} & \tfrac{1}{2} & -\tfrac{1}{2} & \tfrac{1}{2} & -\tfrac{1}{2} & \tfrac{1}{2} \\
			1 & 0 & 0 & -1 & 0 & 0 & 1 & 0 & 0 & -\tfrac{1}{2} & 0 & 0 & \tfrac{1}{2} & 0 & 0 & -\tfrac{1}{2} & 0 & 0 \\
			1 & 0 & 0 & -1 & 0 & 0 & 1 & 0 & 0 & \tfrac{1}{2} & 0 & 0 & -\tfrac{1}{2} & 0 & 0 & \tfrac{1}{2} & 0 & 0 \\
			1 & 1 & 1 & -1 & -1 & -1 & 1 & 1 & 1 & -\tfrac{1}{2} & -\tfrac{1}{2} & -\tfrac{1}{2} & \tfrac{1}{2} & \tfrac{1}{2} & \tfrac{1}{2} & -\tfrac{1}{2} & -\tfrac{1}{2} & -\tfrac{1}{2} \\
			1 & 1 & 1 & -1 & -1 & -1 & 1 & 1 & 1 & \tfrac{1}{2} & \tfrac{1}{2} & \tfrac{1}{2} & -\tfrac{1}{2} & -\tfrac{1}{2} & -\tfrac{1}{2} & \tfrac{1}{2} & \tfrac{1}{2} & \tfrac{1}{2} \\
			1 & -1 & 1 & 0 & 0 & 0 & 0 & 0 & 0 & -\tfrac{1}{2} & \tfrac{1}{2} & -\tfrac{1}{2} & 0 & 0 & 0 & 0 & 0 & 0 \\
			1 & -1 & 1 & 0 & 0 & 0 & 0 & 0 & 0 & \tfrac{1}{2} & -\tfrac{1}{2} & \tfrac{1}{2} & 0 & 0 & 0 & 0 & 0 & 0 \\
			1 & 0 & 0 & 0 & 0 & 0 & 0 & 0 & 0 & -\tfrac{1}{2} & 0 & 0 & 0 & 0 & 0 & 0 & 0 & 0 \\
			1 & 0 & 0 & 0 & 0 & 0 & 0 & 0 & 0 & \tfrac{1}{2} & 0 & 0 & 0 & 0 & 0 & 0 & 0 & 0 \\
			1 & 1 & 1 & 0 & 0 & 0 & 0 & 0 & 0 & -\tfrac{1}{2} & -\tfrac{1}{2} & -\tfrac{1}{2} & 0 & 0 & 0 & 0 & 0 & 0 \\
			1 & 1 & 1 & 0 & 0 & 0 & 0 & 0 & 0 & \tfrac{1}{2} & \tfrac{1}{2} & \tfrac{1}{2} & 0 & 0 & 0 & 0 & 0 & 0 \\
			1 & -1 & 1 & 1 & -1 & 1 & 1 & -1 & 1 & -\tfrac{1}{2} & \tfrac{1}{2} & -\tfrac{1}{2} & -\tfrac{1}{2} & \tfrac{1}{2} & -\tfrac{1}{2} & -\tfrac{1}{2} & \tfrac{1}{2} & -\tfrac{1}{2} \\
			1 & -1 & 1 & 1 & -1 & 1 & 1 & -1 & 1 & \tfrac{1}{2} & -\tfrac{1}{2} & \tfrac{1}{2} & \tfrac{1}{2} & -\tfrac{1}{2} & \tfrac{1}{2} & \tfrac{1}{2} & -\tfrac{1}{2} & \tfrac{1}{2} \\
			1 & 0 & 0 & 1 & 0 & 0 & 1 & 0 & 0 & -\tfrac{1}{2} & 0 & 0 & -\tfrac{1}{2} & 0 & 0 & -\tfrac{1}{2} & 0 & 0 \\
			1 & 0 & 0 & 1 & 0 & 0 & 1 & 0 & 0 & \tfrac{1}{2} & 0 & 0 & \tfrac{1}{2} & 0 & 0 & \tfrac{1}{2} & 0 & 0 \\
			1 & 1 & 1 & 1 & 1 & 1 & 1 & 1 & 1 & -\tfrac{1}{2} & -\tfrac{1}{2} & -\tfrac{1}{2} & -\tfrac{1}{2} & -\tfrac{1}{2} & -\tfrac{1}{2} & -\tfrac{1}{2} & -\tfrac{1}{2} & -\tfrac{1}{2} \\
			1 & 1 & 1 & 1 & 1 & 1 & 1 & 1 & 1 & \tfrac{1}{2} & \tfrac{1}{2} & \tfrac{1}{2} & \tfrac{1}{2} & \tfrac{1}{2} & \tfrac{1}{2} & \tfrac{1}{2} & \tfrac{1}{2} & \tfrac{1}{2}
		\end{bmatrix}.  
		$
	}
	
	\vskip 3pt
	\noindent It is straightforward to verify that $\det (A_0)=4096$, which implies that $A_0$ is invertible. 		Moreover,
	\vskip 3pt
	\resizebox{0.95\textwidth}{!}{
		$
		A_0^{-1}=\begin{bmatrix}
			0 & 0 & 0 & 0 & 0 & 0 & 0 & 0 & \frac{1}{2} & \frac{1}{2} & 0 & 0 & 0 & 0 & 0 & 0 & 0 & 0 \\
			0 & 0 & 0 & 0 & 0 & 0 & -\frac{1}{4} & -\frac{1}{4} & 0 & 0 & \frac{1}{4} & \frac{1}{4} & 0 & 0 & 0 & 0 & 0 & 0 \\
			0 & 0 & 0 & 0 & 0 & 0 & \frac{1}{4} & \frac{1}{4} & -\frac{1}{2} & -\frac{1}{2} & \frac{1}{4} & \frac{1}{4} & 0 & 0 & 0 & 0 & 0 & 0 \\
			0 & 0 & -\frac{1}{4} & -\frac{1}{4} & 0 & 0 & 0 & 0 & 0 & 0 & 0 & 0 & 0 & 0 & \frac{1}{4} & \frac{1}{4} & 0 & 0 \\
			\frac{1}{8} & \frac{1}{8} & 0 & 0 & -\frac{1}{8} & -\frac{1}{8} & 0 & 0 & 0 & 0 & 0 & 0 & -\frac{1}{8} & -\frac{1}{8} & 0 & 0 & \frac{1}{8} & \frac{1}{8} \\
			-\frac{1}{8} & -\frac{1}{8} & \frac{1}{4} & \frac{1}{4} & -\frac{1}{8} & -\frac{1}{8} & 0 & 0 & 0 & 0 & 0 & 0 & \frac{1}{8} & \frac{1}{8} & -\frac{1}{4} & -\frac{1}{4} & \frac{1}{8} & \frac{1}{8} \\
			0 & 0 & \frac{1}{4} & \frac{1}{4} & 0 & 0 & 0 & 0 & -\frac{1}{2} & -\frac{1}{2} & 0 & 0 & 0 & 0 & \frac{1}{4} & \frac{1}{4} & 0 & 0 \\
			-\frac{1}{8} & -\frac{1}{8} & 0 & 0 & \frac{1}{8} & \frac{1}{8} & \frac{1}{4} & \frac{1}{4} & 0 & 0 & -\frac{1}{4} & -\frac{1}{4} & -\frac{1}{8} & -\frac{1}{8} & 0 & 0 & \frac{1}{8} & \frac{1}{8} \\
			\frac{1}{8} & \frac{1}{8} & -\frac{1}{4} & -\frac{1}{4} & \frac{1}{8} & \frac{1}{8} & -\frac{1}{4} & -\frac{1}{4} & \frac{1}{2} & \frac{1}{2} & -\frac{1}{4} & -\frac{1}{4} & \frac{1}{8} & \frac{1}{8} & -\frac{1}{4} & -\frac{1}{4} & \frac{1}{8} & \frac{1}{8} \\
			0 & 0 & 0 & 0 & 0 & 0 & 0 & 0 & -1 & 1 & 0 & 0 & 0 & 0 & 0 & 0 & 0 & 0 \\
			0 & 0 & 0 & 0 & 0 & 0 & \frac{1}{2} & -\frac{1}{2} & 0 & 0 & -\frac{1}{2} & \frac{1}{2} & 0 & 0 & 0 & 0 & 0 & 0 \\
			0 & 0 & 0 & 0 & 0 & 0 & -\frac{1}{2} & \frac{1}{2} & 1 & -1 & -\frac{1}{2} & \frac{1}{2} & 0 & 0 & 0 & 0 & 0 & 0 \\
			0 & 0 & \frac{1}{2} & -\frac{1}{2} & 0 & 0 & 0 & 0 & 0 & 0 & 0 & 0 & 0 & 0 & -\frac{1}{2} & \frac{1}{2} & 0 & 0 \\
			-\frac{1}{4} & \frac{1}{4} & 0 & 0 & \frac{1}{4} & -\frac{1}{4} & 0 & 0 & 0 & 0 & 0 & 0 & \frac{1}{4} & -\frac{1}{4} & 0 & 0 & -\frac{1}{4} & \frac{1}{4} \\
			\frac{1}{4} & -\frac{1}{4} & -\frac{1}{2} & \frac{1}{2} & \frac{1}{4} & -\frac{1}{4} & 0 & 0 & 0 & 0 & 0 & 0 & -\frac{1}{4} & \frac{1}{4} & \frac{1}{2} & -\frac{1}{2} & -\frac{1}{4} & \frac{1}{4} \\
			0 & 0 & -\frac{1}{2} & \frac{1}{2} & 0 & 0 & 0 & 0 & 1 & -1 & 0 & 0 & 0 & 0 & -\frac{1}{2} & \frac{1}{2}  & 0 & 0 \\
			\frac{1}{4} & -\frac{1}{4} & 0 & 0 & -\frac{1}{4} & \frac{1}{4} & -\frac{1}{2} & \frac{1}{2} & 0 & 0 & \frac{1}{2} & -\frac{1}{2} & \frac{1}{4} & -\frac{1}{4} & 0 & 0 & -\frac{1}{4} & \frac{1}{4} \\
			-\frac{1}{4} & \frac{1}{4} & \frac{1}{2} & -\frac{1}{2} & -\frac{1}{4} & \frac{1}{4} & \frac{1}{2} & -\frac{1}{2} & -1 & 1 & \frac{1}{2} & -\frac{1}{2} & -\frac{1}{4} & \frac{1}{4} & \frac{1}{2} & -\frac{1}{2} & -\frac{1}{4} & \frac{1}{4}
		\end{bmatrix}.
		$
	}

	\subsection{Proof of Lemma~\ref{lem:curl-affine-combination}}
	\label{scA2}
	\begin{proof}
		We divide the argument into three steps.
		
		\emph{Step 1. Reduction to the reference patch.}
		Let $F: \widehat W \to W_K$ be the diagonal affine map
		\[
		F(\bxi) = A\bxi + \bb,\qquad 
		A = \operatorname{diag}(h_x,h_y,h_z).
		\]
		Let $\hNh$ denote the unconstrained lowest-order edge element space on the reference patch $\widehat W$. For any $\widehat\bv_h :=(\widehat\bv_h^{(1)} ,\widehat\bv_h^{(2)} ,\widehat\bv_h^{(3)} )^{\top} \in\hNh$, define its covariant Piola image by \[ \mathcal P(\widehat\bv_h)(\bx) := A^{-T}\widehat\bv_h(\bxi), \qquad \bxi=A^{-1}(\bx-\bb). \] Then $\mathcal P$ is a linear bijection from $\hNh$ onto $\Nh|_{W_K}$.
		
		By construction of the least-squares fit and the fact that the Piola transform keeps the edge moments (see, e.g., \cite{monk2003}), the polynomial $\bp_h$ on $W_K$ 
		is exactly the Piola image of the reference fit $\widehat\bp_h$, i.e.\
		$\bp_h = \mathcal P(\widehat\bp_h)$.
		
		For any smooth vector field $\widehat \w$, a direct chain rule calculation gives the curl transform
		\[
		\curl_\bx\!\big(A^{-T}\widehat \w(\bxi)\big)
		= \frac{1}{\det A}\,A\,\big(\widehat\curl_\bxi\widehat \w(\bxi)\big).
		\]
		When $A$ is diagonal, this identity is equivalent to the componentwise formulas
		\begin{equation}\label{eq:curl-scaling}
			\begin{aligned}
				(\curl_\bx \w)_1(\bx) &= \tfrac{1}{h_y h_z}\,(\widehat\curl_\bxi \widehat \w)_1(\bxi),\\
				(\curl_\bx \w)_2(\bx) &= \tfrac{1}{h_z h_x}\,(\widehat\curl_\bxi \widehat \w)_2(\bxi),\\
				(\curl_\bx \w)_3(\bx) &= \tfrac{1}{h_x h_y}\,(\widehat\curl_\bxi \widehat \w)_3(\bxi).
			\end{aligned}
		\end{equation}
		Thus the structure of $\curl \bp_h$ on $W_K$ is inherited from that of $\widehat\curl\,\widehat\bp_h$ on $\widehat W$, up to a fixed diagonal scaling per component.
		
		\emph{Step 2. Structure of the reference least-squares solution.}
		In view of \eqref{eq:curl-scaling}, the proof of  Lemma~\ref{lem:curl-affine-combination} reduces to demonstrating that 
		on the reference patch $\widehat W$,  each component of $\widehat\curl\,\widehat\bp_h$ admits an affine combination  in terms of the corresponding component of $\widehat\curl\,\widehat\bv_h$ at finitely many sampling points:
		\begin{equation}\label{eq:ref-combination}
			(\widehat\curl_\bxi\widehat\bp_h)_i(\bxi)
			= \sum_{m=1}^{M_i} \alpha_{i,m}(\bxi)\;
			(\widehat\curl_\bxi\widehat\bv_h)_i(\bxi^{(i,m)}),
			\qquad i=1,2,3,
		\end{equation}
		where $\{\bxi^{(i,m)}\}_{m=1}^{M_i}\subset \widehat W$ are fixed reference points 
		(e.g.\ face centers), and the coefficients $\alpha_{i,m}:\widehat W\to\mathbb R$ are continuous, bounded functions depending only on the geometry of $\widehat W$.  
		We now prove that the  identity \eqref{eq:ref-combination} holds for all $\bxi\in\widehat W$.
		
		We first notice that 
		for any $\widehat\bv_h \in \hNh$, the least-squares solution $\widehat\bp_h$ is constructed by using 		
		\eq{  \label{eq:hatpheqs}
			\widehat\bp_h^{(i)}(\bxi) = \big( {\bQ}^{(i)} \big)^{\!\top}\big((  A^{(i)})^\top   A^{(i)} \big)^{-1} (  A^{(i)})^\top    \widehat{\mathbf{U}}^{(i)}=\big( {\bQ}^{(i)} \big)^{\!\top} (A^{(i)})^{-1} \widehat{\mathbf{U}}^{(i)},\quad i=1,2,3.
		}
		and thus
		\eq{
			\widehat\curl_\bxi\widehat \bp_h(\bxi) = \begin{pmatrix}
				\big(\partial_\eta {\bQ}^{(3)} \big)^{\!\top}(  A^{(3)})^{-1} \widehat{\mathbf{U}}^{(3)}
				-  \big(\partial_\zeta{\bQ}^{(2)} \big)^{\!\top}(  A^{(2)})^{-1} \widehat{\mathbf{U}}^{(2)}\\[4pt]
				\big(\partial_\zeta{\bQ}^{(1)} \big)^{\!\top}(  A^{(1)})^{-1} \widehat{\mathbf{U}}^{(1)}
				-  \big(\partial_\xi{\bQ}^{(3)} \big)^{\!\top}(  A^{(3)})^{-1} \widehat{\mathbf{U}}^{(3)}\\[4pt]
				\big(\partial_\xi{\bQ}^{(2)} \big)^{\!\top}(  A^{(2)})^{-1} \widehat{\mathbf{U}}^{(2)}
				- \big(\partial_\eta{\bQ}^{(1)} \big)^{\!\top}(  A^{(1)})^{-1} \widehat{\mathbf{U}}^{(1)} 
			\end{pmatrix}.
			\label{eq:curlExp1}
		}
		Since  each edge in $\mathcal{E}(\widehat{W})$ has unit length and the tangential component of a lowest-order edge-element function is constant along each edge, the moments $\widehat{\mathbf{U}}^{(i)}$ are equal to the point values of $\widehat\bv_h$ on the edges in $\mathcal{E}(\widehat{W})$. That is,  $ \widehat{\mathbf{U}}^{(\star^\flat)}= \mathbf{P}^{(\star )}:= \big( \widehat\bv_h^{(\star^\flat)}(p^\star_1) ,\, \widehat\bv_h^{(\star^\flat)}(p^\star_2) ,\,  \ldots,\, 	  \widehat\bv_h^{(\star^\flat)}(p^\star_{18}) \big)^{\!\top} $ $\in\mathbb{C}^{18}$, where  $  p^\star_i \text{ is the mid-point of } e^{\star}_i $ for $ \star=x,y,z$  and $i=1,2, \cdots,18$. 
		Thus we can rewrite \eqref{eq:curlExp1} as
		\eqn{
			\widehat\curl_\bxi\widehat \bp_h(\bxi) = \begin{pmatrix}
				\big(\partial_\eta {\bQ}^{(3)} \big)^{\!\top}(  A_0)^{-1} \mathbf{P}^{(z)}
				-  \big(\partial_\zeta{\bQ}^{(2)} \big)^{\!\top}(  A_0)^{-1} \mathbf{P}^{(y)}\\[4pt]
				\big(\partial_\zeta{\bQ}^{(1)} \big)^{\!\top}(  A_0)^{-1} \mathbf{P}^{(x)}
				-  \big(\partial_\xi{\bQ}^{(3)} \big)^{\!\top}(  A_0)^{-1} \mathbf{P}^{(z)}\\[4pt]
				\big(\partial_\xi{\bQ}^{(2)} \big)^{\!\top}(  A_0)^{-1} \mathbf{P}^{(y)}
				- \big(\partial_\eta{\bQ}^{(1)} \big)^{\!\top}(  A_0)^{-1} \mathbf{P}^{(x)} 
			\end{pmatrix}.
		}
	  Since each edge $e \in \mathcal{E}(\widehat{W})$ has unit length and $\hat{\bv}_h$ is piecewise linear on $\widehat{W}$, the difference of $\hat{\bv}_h$ between two neighboring parallel edges can be equivalently represented in terms of the corresponding partial derivative. For instance, 
		\[
		\hat{\bv}_h^{(2)}(p^y_3) - \hat{\bv}_h^{(2)}(p^y_1) 
		\quad =\quad 
		\partial_{\zeta}\hat{\bv}_h^{(2)}(f^x_1),
		\]
		see Figure~\ref{fig:RefPatch}. 
		Here $f_i^\star$ denotes the center of the face uniquely determined by a pair of neighboring parallel edges $(e^\mu_{j_1},e^\mu_{j_2})$ or $(e^\nu_{k_1},e^\nu_{k_2})$, where $\star \in \{x,y,z\}$, $\{\mu,\nu\}  = \{x,y,z\}\setminus\{\star\}$, and $j_1,j_2,k_1,k_2 \in \{1,2,\ldots,18\}$. The complete correspondence between edge pairs and their associated face centers is summarized in Table~\ref{tab:face} (see also Figure~\ref{fig:RefPatch} ).

		\begin{figure}[h]
			\centering
			\includegraphics[scale=1]{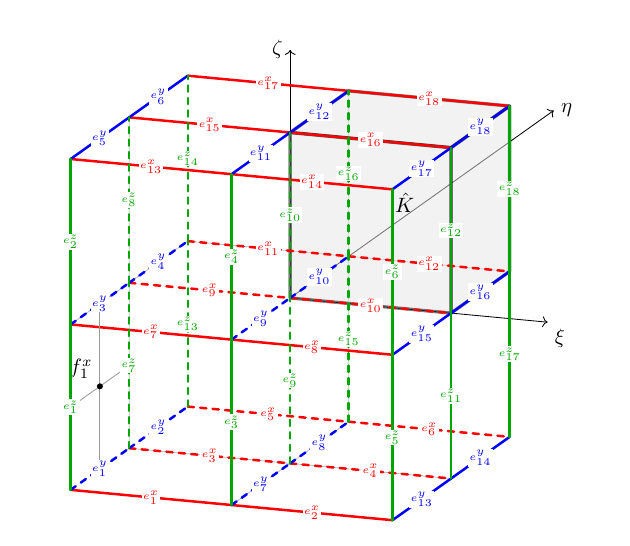}
			\caption{Neighboring parallel edges $e^y_3$, $e^y_1$ (or  $e^z_7$, $e^z_1$), and the associated face center $f^x_1$.}
			\label{fig:RefPatch}
		\end{figure}
		
		\begin{table}[h!]
			\centering
			\renewcommand{\arraystretch}{1.3}
			\begin{tabular}{|c|c|c|c|c|c|c|c|c|c|c|c|}
				\hline
				$ f^x_1$ & $ f^x_2$ & $ f^x_3$ & $ f^x_4$ & $ f^x_5$ & $ f^x_6$ & 
				$ f^x_7$ & $ f^x_8$ & $ f^x_9$ & $ f^x_{10}$ & $ f^x_{11}$ & $ f^x_{12}$ \\
				\hline
				$ e^z_7, e^z_1$ & $ e^z_8, e^z_2$ & $ e^z_{13}, e^z_7$ & $ e^z_{14}, e^z_8$ & 
				$ e^z_9, e^z_3$ & $ e^z_{10}, e^z_4$ & 
				$ e^z_{15}, e^z_9$ & $ e^z_{16}, e^z_{10}$ & 
				$ e^z_{11}, e^z_5$ & $ e^z_{12}, e^z_6$ & 
				$ e^z_{17}, e^z_{11}$ & $ e^z_{18}, e^z_{12}$ \\
				\hline
				$ e^y_3, e^y_1$ & $ e^y_5, e^y_3$ & $ e^y_4, e^y_2$ & $ e^y_6, e^y_4$ & 
				$ e^y_9, e^y_7$ & $ e^y_{11}, e^y_9$ & 
				$ e^y_{10}, e^y_8$ & $ e^y_{12}, e^y_{10}$ & 
				$ e^y_{15}, e^y_{13}$ & $ e^y_{17}, e^y_{15}$ & 
				$ e^y_{16}, e^y_{14}$ & $ e^y_{18}, e^y_{16}$ \\
				\hline
			\end{tabular}
			\begin{tabular}{|c|c|c|c|c|c|c|c|c|c|c|c|}
				\hline
				$ f^y_1$ & $ f^y_2$ & $ f^y_3$ & $ f^y_4$ & $ f^y_5$ & $ f^y_6$ & 
				$ f^y_7$ & $ f^y_8$ & $ f^y_9$ & $ f^y_{10}$ & $ f^y_{11}$ & $ f^y_{12}$ \\
				\hline
				$ e^x_7, e^x_1$ & $ e^x_8, e^x_2$ & $ e^x_{13}, e^x_7$ & $ e^x_{14}, e^x_8$ & 
				$ e^x_9, e^x_3$ & $ e^x_{10}, e^x_4$ & 
				$ e^x_{15}, e^x_9$ & $ e^x_{16}, e^x_{10}$ & 
				$ e^x_{11}, e^x_5$ & $ e^x_{12}, e^x_6$ & 
				$ e^x_{17}, e^x_{11}$ & $ e^x_{18}, e^x_{12}$ \\
				\hline
				$ e^z_3, e^z_1$ & $ e^z_5, e^z_3$ & $ e^z_4, e^z_2$ & $ e^z_6, e^z_4$ & 
				$ e^z_9, e^z_7$ & $ e^z_{11}, e^z_9$ & 
				$ e^z_{10}, e^z_8$ & $ e^z_{12}, e^z_{10}$ & 
				$ e^z_{15}, e^z_{13}$ & $ e^z_{17}, e^z_{15}$ & 
				$ e^z_{16}, e^z_{14}$ & $ e^z_{18}, e^z_{16}$ \\
				\hline
			\end{tabular}
			\begin{tabular}{|c|c|c|c|c|c|c|c|c|c|c|c|}
				\hline
				$ f^z_1$ & $ f^z_2$ & $ f^z_3$ & $ f^z_4$ & $ f^z_5$ & $ f^z_6$ & 
				$ f^z_7$ & $ f^z_8$ & $ f^z_9$ & $ f^z_{10}$ & $ f^z_{11}$ & $ f^z_{12}$ \\
				\hline
				$ e^x_3, e^x_1$ & $ e^x_5, e^x_3$ & $ e^x_4, e^x_2$ & $ e^x_6, e^x_4$ & 
				$ e^x_9, e^x_7$ & $ e^x_{11}, e^x_9$ & 
				$ e^x_{10}, e^x_8$ & $ e^x_{12}, e^x_{10}$ & 
				$ e^x_{15}, e^x_{13}$ & $ e^x_{17}, e^x_{15}$ & 
				$ e^x_{16}, e^x_{14}$ & $ e^x_{18}, e^x_{16}$ \\
				\hline
				$ e^y_7, e^y_1$ & $ e^y_8, e^y_2$ & $ e^y_{13}, e^y_7$ & $ e^y_{14}, e^y_8$ & 
				$ e^y_9, e^y_3$ & $ e^y_{10}, e^y_4$ & 
				$ e^y_{15}, e^y_9$ & $ e^y_{16}, e^y_{10}$ & 
				$ e^y_{11}, e^y_5$ & $ e^y_{12}, e^y_6$ & 
				$ e^y_{17}, e^y_{11}$ & $ e^y_{18}, e^y_{12}$ \\
				\hline
			\end{tabular}
			\label{tab:face}
			\caption{Definitions of $ f^x_i, f^y_i$ and $ f^z_i$ for $ i=1,\dots,12$.}
		\end{table}

		Then a straightforward calculation yields
		\begin{align*}
			&\big(\partial_\eta {\bQ}^{(3)} \big)^{\!\top}(  A_0)^{-1} \mathbf{P}^{(z)}  =   \\
			&\quad\quad  \frac{1}{8} \partial_\eta \hat{\bv}_h^{(3)}(f^x_1) (-1 + \xi) \xi (-1 + 2 \eta) (-1 + 2 \zeta) + 
			\frac{1}{8} \partial_\eta \hat{\bv}_h^{(3)}(f^x_9) \xi (1 + \xi) (-1 + 2 \eta) (-1 + 2 \zeta) \\
			&\quad\quad  - \frac{1}{4} \partial_\eta \hat{\bv}_h^{(3)}(f^x_5) (-1 + \xi^2) (-1 + 2 \eta) (-1 + 2 \zeta) - 
			\frac{1}{8} \partial_\eta \hat{\bv}_h^{(3)}(f^x_3) (-1 + \xi) \xi (1 + 2 \eta) (-1 + 2 \zeta) \\
			&\quad\quad  - \frac{1}{8} \partial_\eta \hat{\bv}_h^{(3)}(f^x_{11}) \xi (1 + \xi) (1 + 2 \eta) (-1 + 2 \zeta) + 
			\frac{1}{4} \partial_\eta \hat{\bv}_h^{(3)}(f^x_7) (-1 + \xi^2) (1 + 2 \eta) (-1 + 2 \zeta) \\
			&\quad\quad  - \frac{1}{8} \partial_\eta \hat{\bv}_h^{(3)}(f^x_2) (-1 + \xi) \xi (-1 + 2 \eta) (1 + 2 \zeta) - 
			\frac{1}{8} \partial_\eta \hat{\bv}_h^{(3)}(f^x_{10}) \xi (1 + \xi) (-1 + 2 \eta) (1 + 2 \zeta) \\
			&\quad\quad  + \frac{1}{4} \partial_\eta \hat{\bv}_h^{(3)}(f^x_6) (-1 + \xi^2) (-1 + 2 \eta) (1 + 2 \zeta) + 
			\frac{1}{8} \partial_\eta \hat{\bv}_h^{(3)}(f^x_4) (-1 + \xi) \xi (1 + 2 \eta) (1 + 2 \zeta) \\
			&\quad\quad  + \frac{1}{8} \partial_\eta \hat{\bv}_h^{(3)}(f^x_{12}) \xi (1 + \xi) (1 + 2 \eta) (1 + 2 \zeta) - 
			\frac{1}{4} \partial_\eta \hat{\bv}_h^{(3)}(f^x_8) (-1 + \xi^2) (1 + 2 \eta) (1 + 2 \zeta) ;  
		\end{align*}
		
		\begin{align*}
			&\big(\partial_\zeta {\bQ}^{(2)} \big)^{\!\top}(  A_0)^{-1} \mathbf{P}^{(y)}  =   \\
			&\quad\quad  \frac{1}{8} \partial_\zeta \hat{\bv}_h^{(2)}(f^x_1) (-1 + \xi) \xi (-1 + 2 \eta) (-1 + 2 \zeta) + 
			\frac{1}{8} \partial_\zeta \hat{\bv}_h^{(2)}(f^x_9) \xi (1 + \xi) (-1 + 2 \eta) (-1 + 2 \zeta) \\
			&\quad\quad  - \frac{1}{4} \partial_\zeta \hat{\bv}_h^{(2)}(f^x_5) (-1 + \xi^2) (-1 + 2 \eta) (-1 + 2 \zeta) - 
			\frac{1}{8} \partial_\zeta \hat{\bv}_h^{(2)}(f^x_3) (-1 + \xi) \xi (1 + 2 \eta) (-1 + 2 \zeta) \\
			&\quad\quad  - \frac{1}{8} \partial_\zeta \hat{\bv}_h^{(2)}(f^x_{11}) \xi (1 + \xi) (1 + 2 \eta) (-1 + 2 \zeta) + 
			\frac{1}{4} \partial_\zeta \hat{\bv}_h^{(2)}(f^x_7) (-1 + \xi^2) (1 + 2 \eta) (-1 + 2 \zeta) \\
			&\quad\quad  - \frac{1}{8} \partial_\zeta \hat{\bv}_h^{(2)}(f^x_2) (-1 + \xi) \xi (-1 + 2 \eta) (1 + 2 \zeta) - 
			\frac{1}{8} \partial_\zeta \hat{\bv}_h^{(2)}(f^x_{10}) \xi (1 + \xi) (-1 + 2 \eta) (1 + 2 \zeta) \\
			&\quad\quad  + \frac{1}{4} \partial_\zeta \hat{\bv}_h^{(2)}(f^x_6) (-1 + \xi^2) (-1 + 2 \eta) (1 + 2 \zeta) + 
			\frac{1}{8} \partial_\zeta \hat{\bv}_h^{(2)}(f^x_4) (-1 + \xi) \xi (1 + 2 \eta) (1 + 2 \zeta) \\
			&\quad\quad  + \frac{1}{8} \partial_\zeta \hat{\bv}_h^{(2)}(f^x_{12}) \xi (1 + \xi) (1 + 2 \eta) (1 + 2 \zeta) - 
			\frac{1}{4} \partial_\zeta \hat{\bv}_h^{(2)}(f^x_8) (-1 + \xi^2) (1 + 2 \eta) (1 + 2 \zeta),
		\end{align*}
		which means that
		\begin{align*}
			&	\big(\partial_\eta {\bQ}^{(3)} \big)^{\!\top}(  A_0)^{-1} \mathbf{P}^{(z)}
			-  \big(\partial_\zeta{\bQ}^{(2)} \big)^{\!\top}(  A_0)^{-1} \mathbf{P}^{(y)}  = \\
			& \qquad \qquad \frac{1}{8}\big[ (-1 + \xi) \xi (-1 + 2 \eta) (-1 + 2 \zeta) \big( \partial_{\eta} \hat{\bv}_h^{(3)}(f^x_1) - \partial_{\zeta} \hat{\bv}_h^{(2)}(f^x_1) \big) \\
			& \qquad \qquad + \xi (1 + \xi) (-1 + 2 \eta) (-1 + 2 \zeta) \big( \partial_{\eta} \hat{\bv}_h^{(3)}(f^x_9) - \partial_{\zeta} \hat{\bv}_h^{(2)}(f^x_9) \big) \\
			& \qquad \qquad - 2 (-1 + \xi^2) (-1 + 2 \eta) (-1 + 2 \zeta) \big( \partial_{\eta} \hat{\bv}_h^{(3)}(f^x_5) - \partial_{\zeta} \hat{\bv}_h^{(2)}(f^x_5) \big) \\
			& \qquad \qquad - (-1 + \xi) \xi (1 + 2 \eta) (-1 + 2 \zeta) \big( \partial_{\eta} \hat{\bv}_h^{(3)}(f^x_3) - \partial_{\zeta} \hat{\bv}_h^{(2)}(f^x_3) \big) \\
			& \qquad \qquad - \xi (1 + \xi) (1 + 2 \eta) (-1 + 2 \zeta) \big( \partial_{\eta} \hat{\bv}_h^{(3)}(f^x_{11}) - \partial_{\zeta} \hat{\bv}_h^{(2)}(f^x_{11}) \big) \\
			& \qquad \qquad + 2 (-1 + \xi^2) (1 + 2 \eta) (-1 + 2 \zeta) \big( \partial_{\eta} \hat{\bv}_h^{(3)}(f^x_7) - \partial_{\zeta} \hat{\bv}_h^{(2)}(f^x_7) \big) \\
			& \qquad \qquad - (-1 + \xi) \xi (-1 + 2 \eta) (1 + 2 \zeta) \big( \partial_{\eta} \hat{\bv}_h^{(3)}(f^x_2) - \partial_{\zeta} \hat{\bv}_h^{(2)}(f^x_2) \big) \\
			& \qquad \qquad - \xi (1 + \xi) (-1 + 2 \eta) (1 + 2 \zeta) \big( \partial_{\eta} \hat{\bv}_h^{(3)}(f^x_{10}) - \partial_{\zeta} \hat{\bv}_h^{(2)}(f^x_{10}) \big) \\
			& \qquad \qquad + 2 (-1 + \xi^2) (-1 + 2 \eta) (1 + 2 \zeta) \big( \partial_{\eta} \hat{\bv}_h^{(3)}(f^x_6) - \partial_{\zeta} \hat{\bv}_h^{(2)}(f^x_6) \big) \\
			& \qquad \qquad + (-1 + \xi) \xi (1 + 2 \eta) (1 + 2 \zeta) \big( \partial_{\eta} \hat{\bv}_h^{(3)}(f^x_4) - \partial_{\zeta} \hat{\bv}_h^{(2)}(f^x_4) \big) \\
			& \qquad \qquad + \xi (1 + \xi) (1 + 2 \eta) (1 + 2 \zeta) \big( \partial_{\eta} \hat{\bv}_h^{(3)}(f^x_{12}) - \partial_{\zeta} \hat{\bv}_h^{(2)}(f^x_{12}) \big) \\
			& \qquad \qquad - 2 (-1 + \xi^2) (1 + 2 \eta) (1 + 2 \zeta) \big( \partial_{\eta} \hat{\bv}_h^{(3)}(f^x_8) - \partial_{\zeta} \hat{\bv}_h^{(2)}(f^x_8) \big) \big] \\
			& \qquad \qquad   =: \sum_{m=1}^{12} \alpha_{1,m}(\bxi)\; 	(\widehat\curl_\bxi\widehat\bv_h)_1(\bxi^{(1,m)}), 
		\end{align*}
		where we  define  $\bxi^{(1,m)}:=f^x_{m}$ for $m=1,2,\cdots,12$, and $  \alpha_{1,1}(\bxi) :=\frac{1}{8}  (-1 + \xi) \xi (-1 + 2 \eta) (-1 + 2 \zeta), \alpha_{1,2}(\bxi) :=-\frac{1}{8} (-1 + \xi) \xi (-1 + 2 \eta) (1 + 2 \zeta) , \cdots,  \alpha_{1,12}(\bxi):=\frac{1}{8}\xi (1 + \xi) (1 + 2 \eta) (1 + 2 \zeta)$.
		
		Similarly, we have 
		\begin{align*}
			& \big(\partial_\zeta{\bQ}^{(1)} \big)^{\!\top}(  A_0)^{-1} \mathbf{P}^{(x)}
			-  \big(\partial_\xi{\bQ}^{(3)} \big)^{\!\top}(  A_0)^{-1} \mathbf{P}^{(z)} = \\
			& \qquad \qquad  \frac{1}{8}\big[ (-1 + 2 \xi) (-1 + \eta) \eta (-1 + 2 \zeta) \big( \partial_{\zeta} \hat{\bv}_h^{(1)}(f^y_1) - \partial_{\xi} \hat{\bv}_h^{(3)}(f^y_1) \big) \\
			& \qquad \qquad - (1 + 2 \xi) (-1 + \eta) \eta (-1 + 2 \zeta) \big( \partial_{\zeta} \hat{\bv}_h^{(1)}(f^y_2) - \partial_{\xi} \hat{\bv}_h^{(3)}(f^y_2) \big) \\
			& \qquad \qquad + (-1 + 2 \xi) \eta (1 + \eta) (-1 + 2 \zeta) \big( \partial_{\zeta} \hat{\bv}_h^{(1)}(f^y_9) - \partial_{\xi} \hat{\bv}_h^{(3)}(f^y_9) \big) \\
			& \qquad \qquad - (1 + 2 \xi) \eta (1 + \eta) (-1 + 2 \zeta) \big( \partial_{\zeta} \hat{\bv}_h^{(1)}(f^y_{10}) - \partial_{\xi} \hat{\bv}_h^{(3)}(f^y_{10}) \big) \\
			& \qquad \qquad - 2 (-1 + 2 \xi) (-1 + \eta^2) (-1 + 2 \zeta) \big( \partial_{\zeta} \hat{\bv}_h^{(1)}(f^y_5) - \partial_{\xi} \hat{\bv}_h^{(3)}(f^y_5) \big) \\
			& \qquad \qquad + 2 (1 + 2 \xi) (-1 + \eta^2) (-1 + 2 \zeta) \big( \partial_{\zeta} \hat{\bv}_h^{(1)}(f^y_6) - \partial_{\xi} \hat{\bv}_h^{(3)}(f^y_6) \big) \\
			& \qquad \qquad - (-1 + 2 \xi) (-1 + \eta) \eta (1 + 2 \zeta) \big( \partial_{\zeta} \hat{\bv}_h^{(1)}(f^y_3) - \partial_{\xi} \hat{\bv}_h^{(3)}(f^y_3) \big) \\
			& \qquad \qquad + (1 + 2 \xi) (-1 + \eta) \eta (1 + 2 \zeta) \big( \partial_{\zeta} \hat{\bv}_h^{(1)}(f^y_4) - \partial_{\xi} \hat{\bv}_h^{(3)}(f^y_4) \big) \\
			& \qquad \qquad - (-1 + 2 \xi) \eta (1 + \eta) (1 + 2 \zeta) \big( \partial_{\zeta} \hat{\bv}_h^{(1)}(f^y_{11}) - \partial_{\xi} \hat{\bv}_h^{(3)}(f^y_{11}) \big) \\
			& \qquad \qquad + (1 + 2 \xi) \eta (1 + \eta) (1 + 2 \zeta) \big( \partial_{\zeta} \hat{\bv}_h^{(1)}(f^y_{12}) - \partial_{\xi} \hat{\bv}_h^{(3)}(f^y_{12}) \big) \\
			& \qquad \qquad + 2 (-1 + 2 \xi) (-1 + \eta^2) (1 + 2 \zeta) \big( \partial_{\zeta} \hat{\bv}_h^{(1)}(f^y_7) - \partial_{\xi} \hat{\bv}_h^{(3)}(f^y_7) \big) \\
			& \qquad \qquad - 2 (1 + 2 \xi) (-1 + \eta^2) (1 + 2 \zeta) \big( \partial_{\zeta} \hat{\bv}_h^{(1)}(f^y_8) - \partial_{\xi} \hat{\bv}_h^{(3)}(f^y_8) \big) \big] \\
			& \qquad \qquad   =: \sum_{m=1}^{12} \alpha_{2,m}(\bxi)\; 	(\widehat\curl_\bxi\widehat\bv_h)_2(\bxi^{(2,m)}), 
		\end{align*}
		and
		\begin{align*}
			& 	\big(\partial_\xi{\bQ}^{(2)} \big)^{\!\top}(  A_0)^{-1} \mathbf{P}^{(y)}
			- \big(\partial_\eta{\bQ}^{(1)} \big)^{\!\top}(  A_0)^{-1} \mathbf{P}^{(x)} = \\
			& \qquad \qquad  \frac{1}{8}\big[(-1 + 2 \xi) (-1 + 2 \eta) (-1 + \zeta) \zeta \big( \partial_{\xi} \hat{\bv}_h^{(2)}(f^z_1) - \partial_{\eta} \hat{\bv}_h^{(1)}(f^z_1) \big) \\
			& \qquad \qquad - (1 + 2 \xi) (-1 + 2 \eta) (-1 + \zeta) \zeta \big( \partial_{\xi} \hat{\bv}_h^{(2)}(f^z_3) - \partial_{\eta} \hat{\bv}_h^{(1)}(f^z_3) \big) \\
			& \qquad \qquad - (-1 + 2 \xi) (1 + 2 \eta) (-1 + \zeta) \zeta \big( \partial_{\xi} \hat{\bv}_h^{(2)}(f^z_2) - \partial_{\eta} \hat{\bv}_h^{(1)}(f^z_2) \big) \\
			& \qquad \qquad + (1 + 2 \xi) (1 + 2 \eta) (-1 + \zeta) \zeta \big( \partial_{\xi} \hat{\bv}_h^{(2)}(f^z_4) - \partial_{\eta} \hat{\bv}_h^{(1)}(f^z_4) \big) \\
			& \qquad \qquad + (-1 + 2 \xi) (-1 + 2 \eta) \zeta (1 + \zeta) \big( \partial_{\xi} \hat{\bv}_h^{(2)}(f^z_9) - \partial_{\eta} \hat{\bv}_h^{(1)}(f^z_9) \big) \\
			& \qquad \qquad - (1 + 2 \xi) (-1 + 2 \eta) \zeta (1 + \zeta) \big( \partial_{\xi} \hat{\bv}_h^{(2)}(f^z_{11}) - \partial_{\eta} \hat{\bv}_h^{(1)}(f^z_{11}) \big) \\
			& \qquad \qquad - (-1 + 2 \xi) (1 + 2 \eta) \zeta (1 + \zeta) \big( \partial_{\xi} \hat{\bv}_h^{(2)}(f^z_{10}) - \partial_{\eta} \hat{\bv}_h^{(1)}(f^z_{10}) \big) \\
			& \qquad \qquad + (1 + 2 \xi) (1 + 2 \eta) \zeta (1 + \zeta) \big( \partial_{\xi} \hat{\bv}_h^{(2)}(f^z_{12}) - \partial_{\eta} \hat{\bv}_h^{(1)}(f^z_{12}) \big) \\
			& \qquad \qquad - 2 (-1 + 2 \xi) (-1 + 2 \eta) (-1 + \zeta^2) \big( \partial_{\xi} \hat{\bv}_h^{(2)}(f^z_5) - \partial_{\eta} \hat{\bv}_h^{(1)}(f^z_5) \big) \\
			& \qquad \qquad + 2 (1 + 2 \xi) (-1 + 2 \eta) (-1 + \zeta^2) \big( \partial_{\xi} \hat{\bv}_h^{(2)}(f^z_7) - \partial_{\eta} \hat{\bv}_h^{(1)}(f^z_7) \big) \\
			& \qquad \qquad + 2 (-1 + 2 \xi) (1 + 2 \eta) (-1 + \zeta^2) \big( \partial_{\xi} \hat{\bv}_h^{(2)}(f^z_6) - \partial_{\eta} \hat{\bv}_h^{(1)}(f^z_6) \big) \\
			& \qquad \qquad - 2 (1 + 2 \xi) (1 + 2 \eta) (-1 + \zeta^2) \big( \partial_{\xi} \hat{\bv}_h^{(2)}(f^z_8) - \partial_{\eta} \hat{\bv}_h^{(1)}(f^z_8) \big) \big]  \\
			& \qquad \qquad   =: \sum_{m=1}^{12} \alpha_{3,m}(\bxi)\; 	(\widehat\curl_\bxi\widehat\bv_h)_3(\bxi^{(3,m)}). 
		\end{align*}
		
		It can be readily verified that $\sum_{m=1}^{12} \alpha_{i,m}(\bxi)=1$ for $ i=1,2,3$. This proves \eqref{eq:ref-combination}.
		
		\emph{Step 3. Transfer to the physical patch.}
		Let $\bx^{(i,m)}:=F(\bxi^{(i,m)})$ be the corresponding physical sampling points.  Denote   $h_1=h_x, h_2=h_y, h_3=h_z$.
		Combining \eqref{eq:curl-scaling} with \eqref{eq:ref-combination} yields, for any $\bx=F(\bxi)\in W_K$,
		\eqn{
			(\curl_\bx \bp_h)_i(\bx)
			&= \tfrac{1}{h_j h_k}\,(\widehat\curl_\bxi\widehat\bp_h)_i(\bxi)\\
			&= \tfrac{1}{h_j h_k}\,
			\sum_{m=1}^{M_i} \alpha_{i,m}(\bxi)\;
			(\widehat\curl_\bxi\widehat\bv_h)_i(\bxi^{(i,m)}),
		}
		where $\{i,j,k\}=\{1,2,3\}$.  
		Using again \eqref{eq:curl-scaling} for $\widehat\bv_h$, we obtain
		\[
		(\curl_\bx \bp_h)_i(\bx)
		= \sum_{m=1}^{M_i} \alpha_{i,m}(\bxi)\;
		(\curl_\bx\bv_h)_i(\bx^{(i,m)}).
		\]
		This is exactly the desired affine combination property \eqref{eq:combina-general} with $M_i=12, \;i=1,2,3$.  The proof is complete.
	\end{proof}

	\endgroup

	\bibliographystyle{abbrv} 
	\bibliography{referencePPRMaxwell}

\end{document}